\newcommand{\numberset}{\mathbb} 
\newcommand{\N}{\numberset{N}} 
\newcommand{\Z}{\numberset{Z}} 
\newcommand{\R}{\numberset{R}}
\newcommand{\I}{\numberset{I}}
\let\C\relax
\newcommand{\C}{\numberset{C}}
\DeclareMathOperator{\KKK}{KK}
\DeclareDocumentCommand{\KK}{O{} m m m}{\KKK_{#1}^{#2}(#3,#4)}
\DeclareDocumentCommand{\RKK}{O{} m m m}{\mathrm{RKK}_{#1}^{#2}(\E;#3,#4)}
\DeclareDocumentCommand{\KKe}{O{*} O{} m m}{\KKK_{#1}^{#2}(#3,#4)}
\newcommand{\hatotimes}{\,\widehat{\otimes}\,}
\newcommand{\hot}{\widehat{\otimes}}
\let\E\relax
\let\Res\relax
\newcommand{\E}{\underline{E}G}
\DeclareMathOperator{\Res}{Res}
\newcommand{\Calg}{C^\ast\text{-algebra}}
\theoremstyle{plain}
\newtheorem{theorem}{Theorem}
\newtheorem{theoremA}{Theorem}
\newtheorem{corollaryA}[theoremA]{Corollary}
\newtheorem*{theorem*}{Theorem}
\newtheorem*{corollary*}{Corollary}
\newtheorem*{assumption*}{Assumption}
\newtheorem*{question*}{Question}
\newtheorem{proposition}[theorem]{Proposition}
\newtheorem{lemma}[theorem]{Lemma}
\newtheorem{remark}[theorem]{Remark}
\newtheorem{corollary}[theorem]{Corollary}
\theoremstyle{definition} 
\newtheorem{definition}[theorem]{Definition}
\theoremstyle{plain}
\theoremstyle{definition}
\DeclareRobustCommand{\SkipTocEntry}[5]{}
\begin{document}
%******************************************************************
% Beginning
%******************************************************************
%\input{BeginEnd/Abstract}

%\phantomsection
%\pdfbookmark[1]{Sommario}{Sommario}
\begin{abstract}
Building on work by Kasparov, we study the notion of Spanier--Whitehead $K$-duality for a discrete group. It is defined as duality in the $\KKK$-category between two $C^*$-algebras which are naturally attached to the group, namely the reduced group $C^*$-algebra and the crossed product for the group action on the universal example for proper actions. %$C^*_r(G)$ and $C_0(\E)\rtimes G$.
We compare this notion to the Baum--Connes conjecture by constructing duality classes based on two methods: the standard ``gamma element'' technique, and the more recent approach via cycles with property gamma. As a result of our analysis, we prove Spanier--Whitehead duality for a large class of groups, including Bieberbach's space groups, groups acting on trees, and lattices in Lorentz groups.
\end{abstract}

\title[Groups with Spanier--Whitehead duality]{Groups with Spanier--Whitehead duality}

\author{Shintaro Nishikawa}

\address{Department of Mathematics, Penn State University, University Park, PA 16802, USA}
\email{sxn28@psu.edu}

\author{Valerio Proietti}
\address{Research Center for Operator Algebras, East China Normal University, 3663 ZhongShan North Road, Putuo District, Shanghai 200062, China}
\email{proiettivalerio@math.ecnu.edu.cn}

%\thanks{Thanks.}

\subjclass[2010]{46L85; 46L80, 55P25}
\keywords{Spanier--Whitehead duality, Poincar\'e duality, Baum--Connes conjecture, direct splitting method, noncommutative topology.}

%\begin{nouppercase}
\maketitle
%\end{nouppercase}

%\input{BeginEnd/Index}

%\pdfbookmark[1]{\contentsname}{tableofcontents}
\setcounter{tocdepth}{2}
\tableofcontents
%\markboth{\contentsname}{\contentsname} 

%*******************************************************
% Figures
%*******************************************************    
%\phantomsection
%\pdfbookmark[1]{\listfigurename}{lof}
%\listoffigures

%*******************************************************
% Tables
%*******************************************************
%\phantomsection
%\pdfbookmark[1]{\listtablename}{lot}
%\listoftables

%******************************************************************
% Main
%******************************************************************
%\input{Sections/intro}

%************************************************
\addtocontents{toc}{\SkipTocEntry}\section*{Introduction}
%************************************************

Alexander duality applies to the homology theory properties of the complement of a subspace inside a sphere in Euclidean space. More precisely, for a finite complex $X$ contained in $S^{n+1}$, if $\tilde{H}$ denotes reduced homology or cohomology with coefficients in a given abelian group, there is an isomorphism $\tilde{H}_i(X) \cong \tilde{H}^{n-i}(S^{n+1}\setminus X)$, induced by slant product with the pullback of the generator $\mu^*([S^n])$, via the duality map $\mu\colon X\times (S^{n+1}\smallsetminus X)\to S^{n}$, $\mu(x,y)=(x-y)/\norm{x-y}$.

Ed Spanier and J.~H.~C. Whitehead generalized this statement and adapted it to the context of stable homotopy theory. Their basic intuition was that sphere complements determine the homology, but not the homotopy type, in general. However, the stable homotopy type can be deduced and provides a first approximation to homotopy type  \cite{spwh:duality}. Thus, the modern statement is phrased in terms of dual objects $X,DX$ in the category of pointed spectra with the smash product as a monoidal structure, and by taking maps to an Eilenberg-MacLane spectrum one recovers Alexander duality formally. 

The modern version of the duality implies Poincar\'e duality for compact manifolds and extends in a natural way to generalized cohomology theories such as $K$-theory. In this setting, a compact $\text{spin}^c$-manifold exhibits Poincar\'e duality in the sense that the $K$-homology class of the Dirac operator induces by cap product an isomorphism $K^*(M)\to K_{*+n}(M)$, where the shift is given by the dimension \cite{kas:descent}. 

More generally, the bivariant version of $K$-theory introduced by Kasparov, which we shall use extensively in this paper, showcases a close relationship to Alexander-Spanier duality; by this we mean that for $X,Y$ finite complexes one has a chain of isomorphisms (\cite{kamscho:swd})
\[
\KKe{C(X)}{C(Y)}\cong \KKe{\C}{C(DX\wedge Y)}\cong K_*(C(DX\wedge Y))\cong K^*(DX\wedge Y).
\]
Having introduced $C^*$-algebras in this way, as they arise naturally in applications to topology, dynamics, and index theory, and are generally noncommutative, it is natural to seek for generalizations of Spanier--Whitehead duality in the framework of noncommutative geometry.

For a separable, nuclear $C^*$-algebra $A$ represented on a Hilbert space, the commutant of its projection into the Calkin algebra has some of the properties reminiscent of a Spanier--Whitehead $K$-dual. This is the \emph{Paschke dual} of $A$, and satisfies $K_*(P(A))\cong K^*(A)$. However, in general $P(A)$ is neither separable nor nuclear, the Kasparov product is not defined, so that it seems desirable to explore different routes for the definition of a $K$-dual.

In \cite{connes:ncg} A.~Connes introduced the appropriate formalism for this question, which shall be described shortly, and in \cite{connes:gravity} he showed the first nontrivial example of a noncommutative Poincar\'e duality algebra, in the form of the irrational rotation algebra. In \cite{eme:poinhyp} H.~Emerson proves the same result for the crossed product of a hyperbolic group acting on its Gromov boundary. Examples of pairs of algebras with Spanier--Whitehead duality were also given by Kaminker and Putnam \cite{kamput:dualshifts} in the case of Cuntz--Krieger algebras associated respectively to $M$ and its transpose, where $M$ is a square $\{0,1\}$-valued matrix. Their result is a special case of a more general one, in which the stable and unstable Ruelle algebras of a Smale space are shown to be in duality \cite{wkp:ruellepoinc}. Duality in $K$-theory also appears in connection with string theory on noncommutative spacetimes \cite{bvrs:string,bvrs:string2}. %Finally in \cite{meyereme:dualities} the authors explore other kinds of dualities in the equivariant setting, not unrelated to Spanier--Whitehead duality.

In this paper, $G$ is a discrete group which admits a $G$-compact model $\E$ of the classifying space for proper actions \cite{BCH:class}. We study the question of Spanier--Whitehead duality for the pair of $C^*$-algebras $C^*_r(G)$ and $C_0(\E)\rtimes G$, where the latter is the crossed product for the group action on $\E$. 

This problem is tightly related to the Baum--Connes conjecture and in particular to the so-called Dirac dual-Dirac method. This goes back to the seminal work of Kasparov \cite[Sections 4 and 6]{kas:descent} and is further explored in \cite[Section 6]{kasskand:buildnov}. In a different direction, the relationship between the assembly map and Fourier-Mukai duality is discussed in \cite{block:dual}.

The idea of an underlying noncommutative duality whenever Dirac and dual-Dirac classes are available is well-known to experts (see for example \cite[Example 2.14]{bvrs:string} and \cite[Theorems 2.9 and 3.1]{echemekim:dualtwisted}). In particular the work \cite{meyereme:dualities} by Emerson and Meyer shares many ideas with the present paper, while working in the context of equivariant $\KKK$-theory and groupoids. See Subsection \ref{subsec:ekd} and Remark \ref{rem_Kasparovdual} for more details.

Below are two main results of this paper. More details on statements and terminology are given in the sequel.

\begin{theorem*} Suppose the $\gamma$-element exists. Then $C_0(\E)\rtimes G$ is a Spanier--Whitehead $K$-dual of $C^*_r(G)$ (in a canonical way) if and only if $G$ satisfies the strong Baum--Connes conjecture.
\end{theorem*}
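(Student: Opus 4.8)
The plan is to identify the canonical duality classes with the images under Kasparov's descent functor $j_G$ of the equivariant Dirac and dual-Dirac morphisms, and then to match the two required zig-zag identities against the defining relations of the $\gamma$-element. Write $A=C^*_r(G)$ and $B=C_0(\E)\rtimes G$, and recall that since $\E$ is proper and $G$-compact the algebra $C_0(\E)$ is a proper $G$-$C^*$-algebra whose Dirac morphism $d\in\KKK^G(C_0(\E),\C)$ realizes $C_0(\E)$ as the projective approximation of $\C$ in $\KKK^G$; the dual-Dirac $\eta\in\KKK^G(\C,C_0(\E))$ is the candidate inverse, and $\eta\otimes_{C_0(\E)}d=\gamma\in\KKK^G(\C,\C)$. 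In this language the strong Baum--Connes conjecture is the assertion that $d$ is a $\KKK^G$-isomorphism and, granting existence of $\gamma$, this is equivalent to $\gamma=1_\C$, an equivalence I take from the preceding sections. The whole theorem therefore reduces to showing that the canonical Spanier--Whitehead duality of $A$ and $B$ holds precisely when $d$ is invertible equivariantly.

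First I would make the canonical classes explicit, obtaining an evaluation $\varepsilon\in\KKK(B\otimes A,\C)$ from $d$ and a coevaluation from $\eta$, by applying $j_G$ and invoking the standard comparison between the diagonal $G$-action and the $G\times G$-action on tensor products; the bookkeeping is organised most cleanly through Kasparov's $\mathrm{RKK}^G(\E;-,-)$ groups. The structural mechanism is that descent carries an equivariant duality between $\C$ and $C_0(\E)$ to a $C^*_r(G)$-linear duality of the crossed products, which under $\C\rtimes_r G=C^*_r(G)$ is exactly a Spanier--Whitehead duality between $A$ and $B$. I would then check that one of the two snake identities holds unconditionally — the one reflecting that the $\gamma$-element restricts to the identity on the proper algebra $C_0(\E)$ (Tu's theorem) — and this is the only place properness of $\E$ is used.

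The forward implication is then immediate: assuming $\gamma=1_\C$, the unconditional identity together with $\gamma=1$ makes $\eta$ a two-sided inverse of $d$, so $d$ and $\eta$ exhibit $C_0(\E)\simeq\C$ in $\KKK^G$; descending this equivalence supplies the second snake identity and hence the canonical duality of $A$ and $B$.

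The reverse implication is the crux. From the canonical Spanier--Whitehead duality of $A$ and $B$ I must recover the \emph{equivariant} statement $\gamma=1_\C$, and here descent is unforgiving: the functor $j_G$ does not reflect isomorphisms, so it is not enough to descend $j_G(\gamma)=1_A$ back to $\KKK^G$. The plan is instead to use the full strength of the duality — the natural isomorphisms $\KKK(A\otimes D,E)\cong\KKK(D,B\otimes E)$ for \emph{all} coefficient pairs $(D,E)$ — read through the $\mathrm{RKK}^G(\E;-,-)$-picture, to reconstruct the equivariant relation $\eta\otimes_{C_0(\E)}d=1_\C$. Concretely I expect to test the duality against coefficients that detect the mapping cone of the Dirac morphism $d$, forcing that cone to be $\KKK^G$-trivial; this is precisely the assertion $\C\in\CI$, i.e. the strong Baum--Connes conjecture. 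Showing that the canonical, $\E$-indexed duality data genuinely see the equivariant $\gamma$, rather than merely its image under descent, is the main obstacle, and it is exactly here that the hypotheses on $\E$ and the direct-splitting / property-$\gamma$ machinery of the earlier sections are needed.
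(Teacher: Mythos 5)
The proposal founders on a definitional point that then becomes a mathematical one. In this paper the strong Baum--Connes conjecture is, by definition, the statement that $\jmath^G_r(\gamma)=1_{C^*_r(G)}$ in $\KKK(C^*_r(G),C^*_r(G))$ (equivalently, that the descended Dirac morphism $\jmath^G_r(\alpha)\colon P_\C\rtimes G\to C^*_r(G)$ is a $\KKK$-equivalence); it is \emph{not} the equivariant statement $\gamma=1_\C$ in $\KKK^G(\C,\C)$. These two are not known to be equivalent, for exactly the reason you yourself point out: descent does not reflect identities. Your proof is calibrated to the wrong statement on both sides. In the forward direction you assume $\gamma=1_\C$, a strictly stronger hypothesis than the theorem's, so the implication ``strong BC $\Rightarrow$ duality'' is not established; the paper instead proves (Proposition \ref{prop:main}) that $\Lambda'_{C_0(\E)\rtimes G}=1_{C_0(\E)\rtimes G}$ holds unconditionally while $\Lambda'_{C^*_r(G)}=\jmath^G_r(\gamma)$, both computed entirely after descent with the counit $d'$ built from $\jmath^G_r(\beta)\,\hot_{P\rtimes_rG}\,s_P\,\hot_{P}\,\alpha$, so that the hypothesis $\jmath^G_r(\gamma)=1_{C^*_r(G)}$ is literally all that is needed. (Relatedly, your cycle data $d\in\KKK^G(C_0(\E),\C)$ and $\eta\in\KKK^G(\C,C_0(\E))$ with $\eta\,\hot_{C_0(\E)}\,d=\gamma$ presuppose that $C_0(\E)$ itself carries a Dirac class, i.e.\ a $K$-orientation; in general the $\gamma$-element only factors through \emph{some} proper algebra $P$, which is why the paper routes the counit through $s_P$ rather than through a hypothetical Dirac element on $\E$.)

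In the reverse direction your plan is to extract the equivariant relation $\gamma=1_\C$, equivalently $\KKK^G$-triviality of the mapping cone of the Dirac morphism, from the non-equivariant duality. This step would fail: the duality data live in $\KKK$-groups of crossed products, and testing them against arbitrary non-equivariant coefficient pairs $(D,E)$ only ever sees that cone after descent; no amount of such testing can detect whether the cone vanishes in $\KKK^G$ --- this is again the failure of descent to reflect isomorphisms. You acknowledge this as ``the main obstacle'' but do not resolve it, so the proposal contains no proof of this direction. The paper's argument (Theorem \ref{thm:equivswdbc}) is different and avoids equivariant lifting altogether: duality with the canonical unit forces the assembly map to be an isomorphism, and, since in the bootstrap category dualizable objects coincide with compact ones and duals are unique (Proposition \ref{prop:compdual}) while $C_0(\E)\rtimes G$ satisfies the UCT, it forces $C^*_r(G)$ to satisfy the UCT as well (Corollary \ref{cor_SWUCT}); then $\jmath^G_r(\alpha)$ is a $K$-theory isomorphism between UCT algebras, hence a $\KKK$-equivalence, which is precisely the strong Baum--Connes conjecture in the paper's sense. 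That UCT mechanism is the missing idea.
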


\begin{corollary*} For all a-T-menable groups $G$ which admit a $G$-compact model of $\E$, $C_0(\E)\rtimes G$ is a Spanier--Whitehead $K$-dual of $C^*_r(G)$.
\end{corollary*}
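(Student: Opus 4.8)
The plan is to derive the corollary from the preceding theorem by checking, for an a-T-menable group $G$, the two conditions that theorem involves: that the $\gamma$-element exists, and that $G$ satisfies the strong Baum--Connes conjecture. The $G$-compactness of $\E$ assumed in the statement is precisely the standing hypothesis under which the pair $(C^*_r(G), C_0(\E)\rtimes G)$ and the theorem make sense, so the entire task reduces to verifying these two conditions, both of which I would obtain from the theorem of Higson and Kasparov on groups acting properly and isometrically on Hilbert space.

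First I would unwind the hypothesis: a-T-menability of $G$ means precisely that $G$ admits a metrically proper affine isometric action on a real Hilbert space $\mathcal{H}$, equivalently a proper, conditionally negative-definite function on $G$ (the Haagerup property). This geometric datum is the input to the Dirac--dual-Dirac construction. From the proper action on $\mathcal{H}$ one builds a proper $G$-$C^*$-algebra $\mathcal{A}$ modeled on $\mathcal{H}$, together with a Dirac element $d\in\KKK^G(\mathcal{A},\C)$ and a dual-Dirac element $\eta\in\KKK^G(\C,\mathcal{A})$; their Kasparov product $\gamma=\eta\otimes_{\mathcal{A}}d\in\KKK^G(\C,\C)$ is the $\gamma$-element. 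Its mere existence already places us within the scope of the preceding theorem.

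The deep point, which I would cite rather than reprove, is the Higson--Kasparov conclusion that $\gamma=1$ in $\KKK^G(\C,\C)$; this rests on establishing Bott periodicity for the infinite-dimensional space $\mathcal{H}$ and is the genuine obstacle concealed in the argument. In the Meyer--Nest reformulation, the equality $\gamma=1$ is equivalent to the assertion that $\C$ lies in the localizing subcategory generated by the algebras induced from finite subgroups of $G$, which is exactly the strong Baum--Connes conjecture. Having thus verified both that $\gamma$ exists and that the strong Baum--Connes conjecture holds for $G$, I would invoke the preceding theorem to conclude that $C_0(\E)\rtimes G$ is a Spanier--Whitehead $K$-dual of $C^*_r(G)$.
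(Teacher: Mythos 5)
Your argument is correct and is essentially the paper's own: invoke the Higson--Kasparov theorem to get existence of the $\gamma$-element together with $\gamma=1$ in $\KKK^G(\C,\C)$, hence $\jmath^G_r(\gamma)=1_{C^*_r(G)}$ after descent, which is the strong Baum--Connes conjecture in the paper's sense, and then apply the preceding theorem. One minor imprecision: the paper defines the strong Baum--Connes conjecture as the \emph{descended} equality $\jmath^G_r(\gamma)=1_{C^*_r(G)}$, which follows from (but is not asserted to be equivalent to) the equality $\gamma=1$ or to $\C$ lying in the localizing subcategory generated by compactly induced algebras; since your proof only uses that implication, the argument stands.
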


% (note that since the $G$-action is amenable the full and reduced crossed products coincide \cite{claire:amenexact}).

\addtocontents{toc}{\SkipTocEntry}\section*{Acknowledgements}

The first author would like to thank  H.~Emerson for his helpful comments and suggestions on this topic. He was supported by Prof. N.~Higson during the summer 2019 at Penn State University. He would like to thank him for the support.

The second author would like to thank B.~Mesland and R.~Nest for stimulating conversations during the early stages of this paper. This research was partly supported through V.~Proietti's ``Oberwolfach Leibniz Fellowship'' by the \emph{Mathematisches Forschungsinstitut Oberwolfach} in 2019. In addition, the second author was partly supported by the Science and Technology Commission of Shanghai Municipality (STCSM), grant no. 13dz2260400.  

We would like to thank K.~Li for his helpful comments and suggestions on an earlier version of this paper.
\bookmarksetup{startatroot}
\subsection{Noncommutative Spanier--Whitehead duality}

Let us see the main notions we will be working with. In what follows the $C^*$-tensor product is understood to be spatial.

\begin{definition}\label{def:wpoin} (cf. {\cite[Section 2.7]{bvrs:string}})
Let $A,B$ be separable $C^*$-algebras. We say that $B$ is a \emph{weak Spanier--Whitehead $K$-dual} of $A$ if there are elements
\begin{align*}
d&\in \KKe[i]{A\otimes B}{\C}\\
\delta &\in \KKe[-i]{\C}{A\otimes B}
\end{align*}
such that the induced maps
\begin{align*}
d_j\colon K_j(A)&\to K^{j+i}(B)&  d_j(x)&=x\,\hot_A\, d\\
\delta_j\colon K^j(B)&\to K_{j-i}(A)&  \delta_j(x)&=\delta\,\hot_B\, x
\end{align*}
are isomorphisms and inverses to each other.
\end{definition}

Note that, unlike the case of topological spaces, in the noncommutative context the existence of $d$, given $\delta$, is an additional requirement.

Some notation: $1_A\in \KKe[0]{A}{A}$ stands for the ring unit, $\sigma\colon A\otimes B \cong B\otimes A$ denotes the flip isomorphism. Recall as well the homomorphism $\tau_B\colon\KKe{A}{A}\to\KKe{A\otimes B}{A\otimes B}$, given on cycles as
\[
(\phi, H, T)\mapsto (\phi\hatotimes 1, H\hatotimes B, T \hatotimes 1),
\]
and equally defined via Kasparov product (over the complex numbers) by $\tau_B(x)=x\hatotimes 1_B=1_B\hatotimes x$.

\begin{lemma}[{\cite[Lemma 9]{eme:poinhyp}}]\label{lem:spoin}
In the setting of Definition \ref{def:wpoin}, we have the following identities:
\begin{align*}
(\delta_{j+i}\circ d_j)(y)&=(-1)^{ij}y\,\hot_{A}\,\Lambda_A\\
(d_{j-i}\circ \delta_j)(y)&=(-1)^{ij}\Lambda_B\,\hot_{B}\,y,
\end{align*}
where the elements $\Lambda_A\in \KKe[0]{A}{A}$ and $\Lambda_B\in \KKe[0]{B}{B}$ are defined as
\begin{align*}
\Lambda_A&=\delta\,\hot_B\, d =(\delta\hatotimes 1_A) \,\hot_{A\otimes B\otimes A}\, (1_A \hatotimes \sigma^*(d))\\
\Lambda_B&=\delta\,\hot_A\, d =(\sigma_*(\delta)\hatotimes 1_B) \,\hot_{B\otimes A\otimes B}\, (1_B \hatotimes d).
\end{align*}
\end{lemma}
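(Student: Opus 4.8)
The plan is to derive both identities from three standard structural properties of Kasparov's theory: associativity of the internal product, functoriality of the external-tensoring homomorphisms $\tau_A,\tau_B$ with respect to the product, and the graded commutativity of the external product $\hatotimes$, the last of which is the sole source of the sign $(-1)^{ij}$. First I would make the shorthand of Definition \ref{def:wpoin} explicit. Writing $K_j(A)=\KKe[j]{\C}{A}$ and $K^j(B)=\KKe[j]{B}{\C}$, the two maps read
\begin{align*}
d_j(y)&=(y\hatotimes 1_B)\,\hot_{A\otimes B}\, d\in\KKe[j+i]{B}{\C},\\
\delta_j(w)&=\delta\,\hot_{A\otimes B}\,(1_A\hatotimes w)\in\KKe[j-i]{\C}{A},
\end{align*}
so that the external factors $\tau_B(y)=y\hatotimes 1_B$ and $\tau_A(w)=1_A\hatotimes w$ are what the symbols $\hot_A$, $\hot_B$ silently carry.

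For the first identity I would simply substitute, obtaining
\[
(\delta_{j+i}\circ d_j)(y)=\delta\,\hot_{A\otimes B}\,\bigl(1_A\hatotimes\bigl[(y\hatotimes 1_B)\,\hot_{A\otimes B}\, d\bigr]\bigr).
\]
Since $\tau_A=1_A\hatotimes(-)$ is a functor on the $\KKK$-category it distributes over the inner product, and associativity then lets me regroup everything as
\[
(\delta_{j+i}\circ d_j)(y)=\bigl[\delta\,\hot_{A\otimes B}\,(1_A\hatotimes y\hatotimes 1_B)\bigr]\,\hot_{A\otimes A\otimes B}\,(1_A\hatotimes d).
\]
Tracing the tensor slots through this contraction, the copy of $A$ created by $y$ is exactly the one absorbed by the source of $d$, while the surviving copy of $A$ produced by $\delta$ becomes the output; thus, as an unsigned composite, the right-hand side already has the shape of $y\,\hot_A\,(\delta\,\hot_B\, d)=y\,\hot_A\,\Lambda_A$.

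The crux is the sign. In the regrouped expression the factors appear in the order $\delta,\,y,\,d$, whereas $y\,\hot_A\,\Lambda_A$ presents them in the order $y,\,\delta,\,d$; passing the degree-$j$ element $y$ across the degree-$(-i)$ element $\delta$ in the external product is legitimate up to the flips $\sigma_*,\sigma^*$ and introduces precisely $(-1)^{j(-i)}=(-1)^{ij}$. The flip $\sigma^*(d)$ visible in the stated formula for $\Lambda_A$ is nothing but the reordering that aligns the two copies of $A\otimes B$ once $y$ has been moved, the homomorphism $\sigma$ being even and hence contributing no further sign. This gives the first identity.

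The second identity follows from the identical argument with the roles of $A$ and $B$ interchanged: one expands $(d_{j-i}\circ\delta_j)(y)$, distributes $\tau_B=1_B\hatotimes(-)$, regroups by associativity, and slides $y$ past the degree-$i$ element $d$ to produce the same sign $(-1)^{ij}$ together with $\Lambda_B=\delta\,\hot_A\, d$, the flip now surfacing as $\sigma_*(\delta)$. I expect the only real difficulty to be bookkeeping: one must keep the flip isomorphisms $\sigma,\sigma_*,\sigma^*$ in their correct places and check that all Koszul signs accumulated during the external-product commutations collapse to the single factor $(-1)^{ij}$ demanded by the normalization of $\Lambda_A$ and $\Lambda_B$.
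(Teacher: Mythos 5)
Your proof is correct and follows essentially the same route as the paper's source for this lemma: the paper gives no argument of its own but cites Emerson's Lemma~9, whose proof is exactly this computation with associativity of the Kasparov product, functoriality of $\tau$, and graded commutativity of the external product as the sole source of the sign $(-1)^{ij}$. Your identification of where the flip $\sigma^*$ (resp.\ $\sigma_*$) enters and of the Koszul sign from moving $y$ past $\delta$ (resp.\ past $d$) matches that argument, so only the routine slot bookkeeping you defer remains to be written out.
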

%\begin{proof}
%We verify the first equation, the second follows similarly (see \cite[Lemma 9]{eme:poinhyp}). From the definition we get
%\begin{align*}
%(\delta_{j+i}\circ d_j)(y)&=\delta\,\hot_{A\otimes B}\, (1_A \hatotimes ((y\hatotimes 1_B)\,\hot_{A\otimes B}\, d))\\
%&=\delta\,\hot_{A\otimes B}\,(1_A \hatotimes y \hatotimes 1_B)\,\hot_{A\otimes A\otimes B} (1_A \hatotimes d)\\
%&=\delta\,\hot_{A\otimes B}\,(1_A \hatotimes \sigma_*(y \hatotimes 1_B))\,\hot_{A\otimes B\otimes A} (1_A \hatotimes \sigma^*(d))\\
%&=(-1)^{ij}y\,\hot_A\,(\delta\hatotimes 1_A)\,\hot_{A\otimes B\otimes A} (1_A \hatotimes \sigma^*(d))\\
%&=(-1)^{ij}y\,\hot_A\,\Lambda_A,
%\end{align*}
%where the second-last equality follows by graded commutativity of the general form of the Kasparov product.
%\end{proof}

\begin{definition}\label{def:swd}% (cf. {\cite[Definition 4.1]{wkp:ruellepoinc}})
Let $A,B$ denote $C^*$-algebras in weak Spanier--Whitehead duality. With notation from Lemma \ref{lem:spoin}, if we have $\Lambda_A=1_A$ and $\Lambda_B=(-1)^i 1_B$, we say that $A$ and $B$ satisfy \emph{Spanier--Whitehead $K$-duality}.
\end{definition}

Note that this definition is symmetric, so that it can equivalently be phrased by saying that $B$ is a Spanier--Whitehead $K$-dual of $A$, in alignment with the weak form introduced earlier.

\begin{remark}\label{rem:dualizable}
In the tensor category $(\KKK,\otimes)$, where objects are $C^*$-algebras and $\mathrm{Hom}(A,B)=\KKe[0]{A}{B}$, the previous definition (for $i=j=0$) can be reinterpreted as the statement that {$A$ is a dualizable object and $B$ is its dual. In other words the following triangle identity (and its analogue swapping $A$ and $B$) holds
\[
\xymatrix{& A\otimes B \otimes A \ar[rd]^-{d\hatotimes 1_A} &\\
A \ar[ru]^-{1_A\hatotimes \delta}  \ar[rr]^-{1_A} && A}
\]
up to the unique isomorphisms coming from braiding and $A\otimes \C\cong A$.}
%$\tau_A$ is left adjoint to $\tau_B$. The component at $X$ of the counit is given by $\tau_X(d)$, while the component at $X$ of the unit is given by $\tau_X(\delta)$. The conditions on $\Lambda_A$ and $\Lambda_B$ translate precisely to the counit-unit equations.
\end{remark}

The Spanier--Whitehead $K$-dual respects tensor products in the following sense: if the dual of $A$ is $B$ and the dual of $A^\prime$ is $B^\prime$, then the dual of $A\otimes B$ is $\KKK$-equivalent to $A^\prime\otimes B^\prime$, provided it exists (see \cite{kamscho:swd}).

\medskip

Throughout this paper $G$ denotes a countable discrete group admitting a $G$-compact model for its universal example for proper actions.

\begin{definition}
We say that $G$ \emph{has (weak) Spanier--Whitehead $K$-duality} if $C_0(\E)\rtimes G$ is a (weak) dual of $C^*_r(G)$.
\end{definition}

\begin{remark}\label{r:cross}
It follows from \cite[Proposition 2.2]{claire:amenexact} that the action of $G$ on $\E$ is amenable. Then by \cite[Theorem 5.3]{claire:amenexact} the associated full and reduced crossed products are isomorphic. In particular, any covariant pair of representations for $C_0(\E)$ and $G$ gives rise to a representation of the reduced crossed product $C_0(\E)\rtimes G$, namely the integrated form.
\end{remark}

In short, the aim of this paper is identifying an element $x$ belonging to the ``representation ring'' $\KKK^G_0(\C, \C)$, and constructing classes $d$ and $\delta$ as above in such a way that $\Lambda_{C^*_r(G)}$ and $\Lambda_{C_0(\E)\rtimes G}$ are both expressible in terms of $x$. Then the sought duality is reduced to studying the homotopy class of such element.

\subsection{Baum--Connes conjecture: the duality perspective}
The Baum--Connes conjecture \cite{BCH:class} states that the Baum--Connes assembly map
\begin{equation}\label{eq:bcbc}
\mu^G\colon \KKK^G_\ast(C_0(\E), \mathbb{C}) \to \KKK_\ast(\mathbb{C}, C^\ast_r(G))
\end{equation}
is an isomorphism of abelian groups. A generalization ``with coefficients'' can be introduced by inserting a $G$-algebra $A$ in the right ``slot'' of the left-hand side of \eqref{eq:bcbc}, and by considering the corresponding reduced crossed product in the target group:
\begin{equation}\label{eq:assmapcoeffintro}
\mu^G_A\colon \KKK^G_\ast(C_0(\E), A) \longrightarrow \KK{}{\C}{A\rtimes_r G}.
\end{equation}

Going back to the case with trivial coefficients (i.e., $A=\C$), since $G$ is a discrete group, the (dual) Green--Julg isomorphism (\cite{black:kth,valjens:misb,markus:ass}) 
\[
 \KKK_\ast^G(C_0(\E), \mathbb{C}) \cong \KKK_\ast(C_0(\E)\rtimes_rG, \mathbb{C})
\]
allows us to view the assembly map as a morphism
\begin{equation}\label{eq:assmapintro}
\KKK_\ast(C_0(\E)\rtimes G, \C) \longrightarrow \KKK_\ast(\C, C^*_r(G)).
\end{equation}
We shall see that this map is given by Kasparov product with a certain element 
\[
\delta\in \KK{}{\C}{C^*_r(G)\otimes C_0(\E)\rtimes G}
\]
(see Definition \ref{def:cunit}). Thus, the Baum--Connes conjecture for a discrete group $G$ admitting a $G$-compact model $\E$ is equivalent to the assertion that the element $\delta$ induces the isomorphism
\[
\delta_\ast\colon K^\ast(C_0(\E)\rtimes G)  \xrightarrow{\cong}  K_\ast(C^*_r(G)).
\]
A priori, this isomorphism itself is not enough to conclude that $G$ has weak Spanier--Whitehead $K$-duality. In this paper, under an assumption (see below), we identify an element
\[
d \in \KKK(C^*_r(G)\otimes C_0(\E)\rtimes G, \mathbb{C})
\]
which induces a map
\[
d_\ast\colon  K_\ast(C^*_r(G))  \xrightarrow{} K^\ast(C_0(\E)\rtimes G)
\]
which is the inverse of $\delta_\ast$ in favorable circumstances, namely if the Baum--Connes conjecture holds (it is a left inverse in general). Our assumption for constructing such an element $d$ is that the existence of the so-called gamma element, or alternatively the $(\gamma)$-element for $G$. Let us briefly review these notions.

\subsection{The \texorpdfstring{$\gamma$}{gamma}-element and the \texorpdfstring{$(\gamma)$}{(gamma)}-element}\label{subsec:ddd}
The following notion of the gamma element originates in Kasparov's work \cite{kas:descent}.
\begin{definition}\label{def:gamma}(See \cite{tu:bcgroupoid})
An element $x$ in $\KKK^G(\mathbb{C}, \mathbb{C})$ is called a \emph{gamma element} for $G$ if it satisfies the following:
\begin{enumerate}
\item for any finite subgroup $F\subseteq G$, we have
\[
\Res_G^F(x)=1_\C\in \KK{F}{\C}{\C}.
\]
\item for some separable, proper $G$-$C^\ast$-algebra $P$, we have
\[
x=\beta\,\hot_P\, \alpha \quad\,\text{where}\,\, \alpha\in \KKK^G(P, \mathbb{C}), \,\, \beta \in \KKK^G(\mathbb{C}, P).
\]
\end{enumerate}
\end{definition}
A gamma element for $G$, if it exists, is a unique idempotent in $\KKK^G(\mathbb{C}, \mathbb{C})$ which is characterized by the listed properties. Thus, we call it the gamma element for $G$ and denote it by $\gamma$. The existence of the gamma element for $G$ implies that the Baum--Connes assembly map is split-injective for all coefficients $A$ (cf. \cite{tu:bcgroupoid}), and furthermore that the assembly map $\mu^G_A$ is surjective if and only if $\gamma$ acts as the identity on $K_\ast(A\rtimes_rG)$ via ring homomorphisms
\begin{equation} \label{ringhoms}
\KKK^G(\mathbb{C}, \mathbb{C}) \to \KKK^G(A, A) \to \KKK(A\rtimes_rG, A\rtimes_rG) \to \mathrm{End}(K_\ast(A\rtimes_rG)).
\end{equation} 
The other composition $y=\alpha \hatotimes \beta$ is an idempotent in $\KKK(P, P)$ which may not be the identity on $P$ in general. Upon replacing $P$ with its ``summand'' $P_{\C}=yP$, which can be defined as a limit of $P \xrightarrow{y} P \xrightarrow{y} \cdots$ in the category $\KKK^G$ (cf. \cite[Proposition 1.6.8]{nee:tri}), we can arrange $\alpha$ (and $\beta$) above to be a weak-equivalence, meaning that $\Res_G^F(\alpha)$ is an isomorphism for any finite subgroup $F$ of $G$. In this case, the element $\alpha$ in $\KKK^G(P_{\C}, \mathbb{C})$ is called the \emph{Dirac element} and can be characterized up to equivalence by the fact that $\alpha$ is a weak-equivalence from a ``proper object'' $P_\C$ to $\mathbb{C}$. Meyer and Nest \cite{nestmeyer:loc} show that the Dirac element always exists for any group $G$ but, in general, it is not known whether $P_\C$ can be taken to be a proper $C^*$-algebra. For most of the known examples,  $P_\C$ can indeed be assumed to be proper, meaning that we may think $P=P_\C$. However, we emphasize that the algebra $P$ appearing in the definition can be quite arbitrary whereas $P_\C$ is a uniquely characterized object.

In \cite{nish:dsplitb}, the first author introduced a notion called the $(\gamma)$-element, which can be thought of as an alternative description of the gamma element, bypassing the necessity of a proper algebra $P$ for its definition. 

Recall that we assume that $G$ admits a $G$-compact model for $\E$. We use $[\cdot,\cdot]$ to denote the commutator.% We call a function $c\in C_c(\E)$ a cutoff function of $\E$ if $\sum_{g\in G} g(c)^2=1$.

\begin{definition}(\cite[Definition 2.2]{nish:dsplitb})
A cycle $(H,T)$ representing an element $[H, T]$ in $\KK{G}{\C}{\C}$ is said to have \emph{property $(\gamma)$} if it satisfies the following:
\begin{enumerate}
\item for any finite subgroup $F\subseteq G$ we have
\[
\Res_G^F([H,T])=1_\C\in \KK{F}{\C}{\C}.
\]
\item there is a non-degenerate $G$-equivariant representation of $C_0(\E)$ on $H$ such that 
\begin{enumerate}
\item[(2.1)] the function
\[
g\mapsto [g\cdot f, T]
\]
belongs to $C_0(G,K(H))$, i.e., it vanishes at infinity and is compact-operator-valued for any $f\in C_0(\E)$.
\item[(2.2)] for some cutoff function $c\in C_c(\E)$ (i.e., $c$ is non-negative and satifies $\sum_{g\in G} g(c)^2=1$), we have
\[
T- \sum_{g\in G} (g\cdot c)T(g\cdot c)\in K(H).
\]
\end{enumerate}
\end{enumerate}
An element $x$ in $\KKK^G(\mathbb{C}, \mathbb{C})$ is called a ($\gamma$)-element for $G$ if it is represented by some cycle with property $(\gamma)$.
\end{definition}

A $(\gamma)$-element for $G$, if it exists, is a unique idempotent in $\KKK^G(\mathbb{C}, \mathbb{C})$ which is characterized by the listed properties. Thus, we call it the $(\gamma)$-element for $G$. If there is a gamma element $\gamma$ for $G$, there is a cycle with property $(\gamma)$ representing $\gamma$. Thus the two notions, the $\gamma$-element and the $(\gamma)$-element for $G$, coincide when $\gamma$ exists. The existence of the $(\gamma)$-element $x$ for $G$ implies that the Baum--Connes assembly map is split-injective for all coefficients $A$, and furthermore that the assembly map $\mu^G_A$ is surjective if and only if $x$ acts as the identity on $K_\ast(A\rtimes_rG)$ via ring homomorphisms \eqref{ringhoms}.

Given the existence of the $(\gamma)$-element, \cite{nish:dsplitb} introduced the so-called $(\gamma)$-morphism as a candidate for inverting the assembly map $\mu^G$. This is given by Kasparov product with a certain element
\[
\tilde x \in \KKK^G(C^\ast_r(G)\otimes C_0(\E), \C).
\]
The Green--Julg isomorphism allows us to get the corresponding element  $d\in \KKK(C^*_r(G)\otimes C_0(\E)\rtimes G, \C)$. 

Our proposed strategy aims at realizing weak Spanier--Whitehead duality through elements $\delta$ and $d$ respectively corresponding to the assembly map and the $(\gamma)$-morphism, which seems to be a natural situation. Furthermore, as a result of Lemma \ref{lem:spoin}, the surjectivity and injectivity of the assembly map are controlled respectively by $\Lambda_{C^*_r(G)}$ and $\Lambda_{C_0(\E)\rtimes G}$. This gives yet another interpretation of these two classes.

\subsection{Equivariant Kasparov duality}\label{subsec:ekd}

In \cite{meyereme:dualities} the authors study several duality isomorphisms between equivariant bivariant $K$-theory groups, generalizing Kasparov's first and second Poincar\'e duality isomorphisms. For many groupoids, both dualities apply to a universal proper $G$-space, which is the basis for the Dirac dual-Dirac method. In this setting they explain how to describe the Baum--Connes assembly map via localization of categories as in \cite{nestmeyer:loc}.

The main notion in \cite{meyereme:dualities} is that of \emph{equivariant Kasparov dual} for a $G$-space $X$. %We omit the precise definition here (see \cite[Definition 4.1]{meyereme:dualities}) and focus on the case where $X=\E$. This situation has an interesting overlap with Theorem \ref{thm:pdual} above, in the following sense.
It involves an $X \rtimes G$-$C^*$-algebra $P$, an element $\alpha\in\KK{G}{P}{\C}$, and an additional class $\Theta\in \mathrm{RKK}^G(X;\C,P)$ (see \cite[Definition 4.1]{meyereme:dualities} for more details). Recall that the category $\mathrm{RKK}^G(X)$ coincides with the range of the pullback functor $p^*_{X}\colon \KKK^G\to \KKK^{X\rtimes G}$ via the collapsing map $p\colon X\to \ast$. 

The case $X=\E$ is particularly relevant for our purposes. The class $\Theta$ may be thought as the ``inverse'' of $\alpha$ up to restriction to finite subgroups. More precisely, if a lifting $\beta\in\KK{G}{\C}{P}$ of $\Theta$ exists, then the axioms of equivariant Kasparov duality guarantee that $\beta\,\hot_P\,\alpha$ is the $\gamma$-element and $\alpha\,\hot_\C\,\beta=1_P$. In particular, we have $P=P_\C$ and $\alpha$ is a weak equivalence, and hence a Dirac morphism.

Let $Z$ denote the unit space of $G$ and suppose the moment map from $\E \to Z$ is proper. Then \cite[Theorem 5.7]{meyereme:dualities} establishes a connection to what we might call ``equivariant'' Spanier-Whitehead duality. We summarize it below for the convenience of the reader (see also Remark \ref{rem_Kasparovdual}).

\begin{theorem}
The triple $(P,\alpha,\Theta)$ is a Kasparov dual for $X$ if and only if $C_0(X)$ and $P$ are dual objects in $\KKK^G$ (cf. Remark \ref{rem:dualizable}) with duality unit and counit respectively induced by $\Theta$ and $\alpha$.
\end{theorem}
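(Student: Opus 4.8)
The plan is to prove the equivalence by matching the defining axioms of a Kasparov dual (\cite[Definition 4.1]{meyereme:dualities}) with the two triangle identities of Remark~\ref{rem:dualizable} for the pair $(C_0(X),P)$. Conceptually this is clean: a Kasparov dual is engineered precisely so that Kasparov product with $\Theta$ and $\alpha$ implements a natural isomorphism $\KK{G}{C_0(X)\otimes A}{B}\cong\KK{G}{A}{P\otimes B}$ in $A$ and $B$, and in a symmetric monoidal category the existence of such a natural isomorphism is equivalent to $C_0(X)$ being dualizable with dual $P$; the content of the theorem is then to identify the resulting unit and counit with the classes induced by $\Theta$ and $\alpha$. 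Throughout I suppress the degree shift by $\dim X$.

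First I would make explicit how $\alpha$ and $\Theta$ produce the counit and unit. Since $P$ is an $X\rtimes G$-algebra it is a $G$-$C_0(X)$-algebra, whose module multiplication gives a $G$-equivariant $\ast$-homomorphism and hence a class $m_P\in\KK[0]{G}{C_0(X)\otimes P}{P}$; I take the counit to be $\varepsilon:=m_P\hatotimes_P\alpha\in\KK{G}{C_0(X)\otimes P}{\C}$, which plays the role of $d$ in Remark~\ref{rem:dualizable}. For the unit I use the identification $\mathrm{RKK}^G(X;\C,P)\cong\KK[0]{X\rtimes G}{C_0(X)}{C_0(X)\otimes P}$ together with the pullback functor $p_X^\ast\colon\KKK^G\to\KKK^{X\rtimes G}$ to regard $\Theta$ as a $C_0(X)$-linear class $\bar\Theta\in\KK[0]{G}{C_0(X)}{C_0(X)\otimes P}$; the unit $\eta\in\KK{G}{\C}{C_0(X)\otimes P}$ (the class $\delta$ of Remark~\ref{rem:dualizable}) is then the class whose $C_0(X)$-linearization $(1_{C_0(X)}\hatotimes m_P)\circ(\eta\hatotimes 1_{C_0(X)})$ equals $\bar\Theta$.

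For the direction ``Kasparov dual $\Rightarrow$ dual objects'' I would verify the two triangle identities. Applying the forgetful functor to the axiom $\Theta\hatotimes_P\alpha=1_{C_0(X)}$ (valid in $\mathrm{RKK}^G(X;\C,\C)$) and unwinding $p_X^\ast(\alpha)=1_{C_0(X)}\hatotimes\alpha$ turns it into the identity $(1_{C_0(X)}\hatotimes\alpha)\circ\bar\Theta=1_{C_0(X)}$ in $\KK[0]{G}{C_0(X)}{C_0(X)}$; combined with the relation between $\bar\Theta$ and $\eta$ and the formula for $\varepsilon$, this is exactly the first triangle identity. The second triangle identity I would extract from the remaining defining axiom, the symmetry condition forcing the two $C_0(X)$-module structures on $P\otimes_{C_0(X)}P$ (coming from $\Theta$ and from $\alpha$) to agree. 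The lift picture of Subsection~\ref{subsec:ekd} is a helpful guide: whenever $\Theta$ admits a lift $\beta\in\KK{G}{\C}{P}$, the unit and counit collapse to $\beta$ and $\alpha$, the relation $\alpha\hatotimes\beta=1_P$ is precisely the second triangle identity, and $\beta\hatotimes_P\alpha=\gamma$ is the $\C$-level shadow of the first. The converse runs the same dictionary backwards: assuming the triangle identities, the first one returns the $\mathrm{RKK}$-identity $\Theta\hatotimes_P\alpha=1_{C_0(X)}$ and the second returns the symmetry axiom, so that $(P,\alpha,\Theta)$ is a Kasparov dual.

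The main obstacle is the passage between $\mathrm{RKK}^G(X;\C,\C)=\KK[0]{X\rtimes G}{C_0(X)}{C_0(X)}$, where the Kasparov-dual axioms live, and $\KKK^G$, where the triangle identities are phrased. The pullback functor $p_X^\ast$ is in general neither full nor faithful --- it sends both $1_\C$ and $\gamma$ to $1_{C_0(X)}$, the latter because $\gamma$ acts as the identity on the proper algebra $C_0(X)$ --- so one must show that the correspondence $\Theta\leftrightarrow\eta$ is a well-defined bijection onto the $C_0(X)$-linear classes and that the first triangle identity is recovered as a genuine identity in $\KK[0]{G}{C_0(X)}{C_0(X)}$, not merely up to the $\gamma$-ambiguity visible over $\C$. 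This is exactly where the hypothesis that the moment map $X\to Z$ is proper enters, as it makes $C_0(X)$ a proper, dualizable object and forces the comparison maps induced by $p_X^\ast$ to be isomorphisms in the relevant degrees. This is also the whole point of the statement: duality holds between $C_0(X)$ and $P$ unconditionally, whereas the analogous duality between $\C$ and $P$ would require $\gamma=1_\C$, i.e.\ the Baum--Connes conjecture. Carefully tracking the $C_0(X)$-module structure and matching the braidings and the suppressed degree shift on both sides is where the substance of the argument lies.
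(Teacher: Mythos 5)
You should first be aware that the paper does not prove this theorem at all: it is stated explicitly as a summary of \cite[Theorem 5.7]{meyereme:dualities}, so your attempt has to be measured against Emerson--Meyer's argument rather than anything in the text. Your skeleton does match that argument in outline: use properness of the anchor map to identify $\mathrm{RKK}^G(X;\C,P)$ with $\KK{G}{\C}{C_0(X)\otimes P}$, take the counit to be $[m_P]\hatotimes_P\alpha$, and translate the Kasparov-dual axioms into the triangle identities. Your closing observation --- that duality is asserted against $C_0(X)$ rather than against $\C$, which is why no $\gamma$-ambiguity enters --- is also exactly the right conceptual point.

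The genuine gap is in your dictionary of axioms. Definition 4.1 of \cite{meyereme:dualities}, which you cite, imposes \emph{three} conditions on $(P,\alpha,\Theta)$, not two: besides the normalization $\Theta\hatotimes_P\alpha=1_X$ and the flip-symmetry condition on $T_P(\Theta)$ in $P\otimes P$, there is a compatibility (naturality) condition, $\Theta\otimes_\C f=\Theta\otimes_P T_P(f)$ for all $f\in\mathrm{RKK}^G(X;A,B)$. Your plan pairs ``normalization $\leftrightarrow$ first triangle identity'' and ``the remaining axiom (symmetry) $\leftrightarrow$ second triangle identity,'' and this silently drops the compatibility condition, which is needed in both directions. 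In the forward direction, the first triangle identity does \emph{not} follow from the normalization axiom just by applying the forgetful functor: in the triangle composite the counit pairs the \emph{external} copy of $C_0(X)$ with $P$ through $m_P$, whereas in $\Theta\hatotimes_P\alpha$ the product is taken $C_0(X)$-linearly over the diagonal; identifying these two operations is precisely what the compatibility condition (together with the explicit form of the properness isomorphism) accomplishes, and it is the mechanism by which Emerson--Meyer show the two maps of the first Poincar\'e duality isomorphism are mutually inverse. In the converse direction, you must \emph{derive} the compatibility condition from the categorical duality data before you can conclude that $(P,\alpha,\Theta)$ is a Kasparov dual; this is not a formal consequence of the triangle identities --- one has to show that every class induced from $\KK{G}{\C}{C_0(X)\otimes P}$ through the properness isomorphism satisfies it --- and your plan never mentions this step.

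A second, more local problem: your stated role for properness is circular. Properness of $X\to Z$ does not make ``$C_0(X)$ a proper, dualizable object'' --- dualizability of $C_0(X)$ is the conclusion of the theorem, available only once a Kasparov dual is given. What properness actually buys (for a group, $Z$ is a point, so $X$ is compact and $C(X)$ is unital) is that the natural map $\KK{G}{A}{C(X)\otimes B}\to\mathrm{RKK}^G(X;A,B)$, given by pulling back along $p_X$ and then multiplying the two $C_0(X)$-factors, is an isomorphism; note this is not the functor $p_X^*$ itself, which, as you correctly observe, is neither full nor faithful. It is this comparison isomorphism that legitimizes your implicit definition of the unit $\eta$, and it must be invoked as such. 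With that isomorphism stated correctly and the compatibility condition incorporated into the dictionary, your outline would become a proof; as written, the hard bookkeeping you defer (``where the substance of the argument lies'') is exactly the part that is missing.
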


Concerning the connection with the Baum--Connes assembly map, we have:
\begin{theorem}[{\cite[Theorem 6.14]{meyereme:dualities}}]
Suppose $\E$ admits a local symmetric Kasparov dual. Then the assembly map $\mu^G_A$ is an isomorphism for all proper coefficient algebras $A$.
\end{theorem}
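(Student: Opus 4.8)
The plan is to use the duality between $C_0(\E)$ and $P$ in $(\KKK^G,\otimes)$ established above to rewrite the domain of $\mu^G_A$, and then to exhibit a chain of isomorphisms whose composite is $\mu^G_A$ and which is manifestly bijective once $A$ is proper. The role of the \emph{local symmetric} hypothesis is twofold. Symmetry is what guarantees (via the existence of a lift $\beta$ of $\Theta$, as recalled before the theorem) that the counit $\alpha\colon P\to\C$ is a Dirac morphism: $P=P_\C$ is a genuinely proper $\E\rtimes G$-algebra and $\Res^F_G(\alpha)$ is invertible for every finite $F\subseteq G$. Locality is what ensures the duality isomorphism is compatible with the fibering over $\E$, so that it remains valid and natural when $A$ is an arbitrary proper coefficient algebra (fibered over $\E$) rather than a trivial one.

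First I would record the three maps to be composed. Because $C_0(\E)$ is dualizable with dual $P$, unit $\Theta$ and counit $\alpha$, the adjunction of Remark~\ref{rem:dualizable} supplies a natural isomorphism
\[
\Phi_A\colon \KKK^G_\ast(C_0(\E), A)\xrightarrow{\ \cong\ }\KKK^G_\ast(\C, P\otimes A),
\]
obtained by tensoring with $P$ and composing with $\Theta$ and $\alpha$. Next, since $P$ is a proper $\E\rtimes G$-algebra and $A$ is proper, the algebra $P\otimes A$ is proper, so the Green--Julg theorem for proper coefficients gives $\KKK^G_\ast(\C,P\otimes A)\cong K_\ast((P\otimes A)\rtimes_r G)$. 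Finally, $\alpha$ is a weak equivalence; tensoring with $1_A$ preserves this because $\Res^F_G(\alpha\otimes 1_A)=\Res^F_G(\alpha)\otimes 1$ stays invertible, and since $P\otimes A$ and $A$ both lie in the localizing subcategory $\langle\mathcal{CI}\rangle$ of compactly induced objects (proper algebras do), a weak equivalence between them is a genuine $\KKK^G$-equivalence. (Equivalently, $\beta\otimes 1_A$ is a two-sided inverse, using that $\gamma$ acts identically on proper $A$.) Applying the descent functor $\rtimes_r G$ then makes $K_\ast\big((\alpha\otimes 1_A)\rtimes_r G\big)\colon K_\ast((P\otimes A)\rtimes_r G)\to K_\ast(A\rtimes_r G)$ an isomorphism onto the target $\KKK_\ast(\C,A\rtimes_r G)$. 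Stringing these together yields
\[
\KKK^G_\ast(C_0(\E),A)\xrightarrow{\ \Phi_A\ }\KKK^G_\ast(\C,P\otimes A)\cong K_\ast\big((P\otimes A)\rtimes_rG\big)\cong K_\ast(A\rtimes_rG).
\]

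The decisive step, and the one I expect to be the main obstacle, is to verify that this composite coincides with $\mu^G_A$ up to the canonical identifications (and sign). The clean way is through the Meyer--Nest description of $\mu^G_A$ as the map on $K$-theory of reduced crossed products induced by the Dirac morphism, after identifying its source with $K_\ast((P\otimes A)\rtimes_r G)$ via $\Phi_A$ and Green--Julg. Here the symmetry of the Kasparov dual is essential: it forces unit $\Theta$ and counit $\alpha$ to be mutually adjoint in the precise sense needed for $\Phi_A$ to intertwine the abstract triangle identities of Remark~\ref{rem:dualizable} with the concrete descent of $\alpha$, which is exactly the assembly construction. Without symmetry the two composites would differ by the action of the $\gamma$-element, and one would only recover the weaker assertion that $\mu^G_A$ is an isomorphism precisely when $\gamma$ acts identically. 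Once this compatibility is established, the isomorphism of $\mu^G_A$ for all proper $A$ is immediate from the three isomorphisms above.
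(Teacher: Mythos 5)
Your overall skeleton---translate $\mu^G_A$ through a duality isomorphism plus Green--Julg, then show that the Dirac-type morphism becomes invertible after tensoring with a proper coefficient algebra---matches the paper's sketch (which uses the second Poincar\'e duality isomorphism and Green--Julg for proper groupoids in place of your adjunction $\Phi_A$; that difference is harmless). The genuine gap is in your invertibility step. You claim that the \emph{symmetric} hypothesis guarantees the existence of a lift $\beta\in\KK{G}{\C}{P}$ of $\Theta$, and you derive everything else from that lift (that $\alpha$ is a Dirac morphism, that $P=P_\C$ is proper, and, in your parenthetical fallback, that $\gamma$ exists and acts as the identity on proper algebras). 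But the discussion preceding the theorem is explicitly conditional: \emph{if} a lifting $\beta$ of $\Theta$ exists, \emph{then} $\beta\hot_P\alpha$ is the $\gamma$-element and $\alpha\hot_\C\beta=1_P$. The existence of such a lift is precisely the existence of a dual-Dirac morphism, hence of the $\gamma$-element; it does not follow from having a (local symmetric) Kasparov dual, and it is exactly the hypothesis this theorem is designed to avoid. If symmetry implied the lift, the statement would be subsumed by the standard $\gamma$-element argument and would carry no additional content; its whole point is to produce isomorphism for proper coefficients for groups where no dual-Dirac element is known.

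The mechanism that actually works, and the one the paper indicates, needs no lift: since $A$ is proper, it is (by universality of $\E$) an $\E\rtimes G$-algebra, so the operation $\tau_A=-\hatotimes 1_A$ applied to $\alpha\in\KK{G}{P}{\C}$ factors through the pullback $p^*_{\E}\colon\KKK^G\to \RKK{G}{\cdot}{\cdot}$, and the axioms of an equivariant Kasparov dual make $p^*_{\E}(\alpha)$ invertible in $\RKK{G}{P}{\C}$ with inverse essentially $\Theta$. Consequently $\tau_A(\alpha)\in\KK{G}{P\otimes A}{A}$ is invertible outright, the inverse being built from $\Theta$ acting fibrewise on $A$; no $\gamma$-element, no weak-equivalence argument, and no appeal to the localizing subcategory $\langle\mathcal{CI}\rangle$ is required. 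This is also why properness of $A$ is the right hypothesis: $\Theta$ lives only over $\E$, so it can act on $A$ exactly when $A$ is fibered over $\E$. To repair your proof, replace the lift-based step by this fibrewise-inverse argument; your remaining steps (the adjunction $\Phi_A$, Green--Julg for proper algebras, and the identification of the composite with $\mu^G_A$, which you in any case leave unverified) can then be retained.
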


Assuming $\E$ to be $G$-compact, the proof of the previous theorem roughly goes as follows: the second Poincar\'e duality isomorphism \cite[Section 6]{meyereme:dualities} combined with the Green-Julg isomorphism for proper groupoids \cite[Theorem 4.2]{meyhea:eqrepkth} translate the assembly map $\mu^G_A$ into the map $K_*((P\otimes A)\rtimes G)\to K_*(A\rtimes G)$ induced by $\alpha$. Now it is easy to see from the definition of equivariant Kasparov dual that the element $\tau_A(\alpha)\in\KKK^G(P\otimes A,A)$ is invertible when $A$ is a proper $C^*$-algebra.

\subsection{Main results}

Let us summarize our main results. Recall that $G$ is a countable discrete group with a $G$-compact model for $\E$.

As we have explained in the previous sections, our main strategy for obtaining duality relies on
\begin{enumerate}
\item the $\gamma$ element, or 
\item the $(\gamma)$-element.
\end{enumerate} 
The choice of one over the other does not affect the expression for the unit of Spanier--Whitehead duality, nevertheless the descriptions of the counit and the elements $\Lambda_{C^*_r(G)}$ and $\Lambda_{C_0(\E)\rtimes G}$ depend on the method that we are employing. In practice, the latter elements will be expressible in terms of the $\gamma$-element in the first case, and in the terms of the $(\gamma)$-element in the second case.

Along this categorization, Theorem \ref{thm:thmA} and Corollary \ref{cor:corB} below fall in the first scenario, while Theorem \ref{thm:thmC} is an instance of the second. Section \ref{sec:cons} contains simple examples of possible applications of duality in $K$-theory.

\begin{theoremA}\label{thm:thmA}
Suppose that the $\gamma$-element $\gamma\in \KK{G}{\C}{\C}$ exists and let $P_\C$ be the source of the Dirac morphism $\alpha\in \KK{G}{P_\C}{\C}$. Then the $C^*$-algebra $P_\C\rtimes G$ is Spanier--Whitehead $K$-dual to $C_0(\E)\rtimes G$.
\end{theoremA}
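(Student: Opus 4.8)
The plan is to descend the Dirac--dual-Dirac data to the reduced crossed products and then to read the two triangle identities of Definition~\ref{def:swd} off the two defining properties of $\gamma$ in Definition~\ref{def:gamma}. Write $A=P_\C\rtimes G$ and $B=C_0(\E)\rtimes G$. The hypothesis furnishes (after passing to the summand $P_\C$, as recalled just before the statement) a Dirac morphism $\alpha\in\KK{G}{P_\C}{\C}$ and a class $\beta\in\KK{G}{\C}{P_\C}$ with $\alpha\hot\beta=1_{P_\C}$ and $\beta\hot\alpha=\gamma$, where $P_\C$ is a proper $G$-algebra and $\alpha$ is a weak equivalence. Applying the descent homomorphism $j_G\colon\KK{G}{\cdot}{\cdot}\to\KK{}{\cdot\rtimes G}{\cdot\rtimes G}$ yields $j_G(\alpha)\in\KK{}{A}{C^*_r(G)}$ and $j_G(\beta)\in\KK{}{C^*_r(G)}{A}$ with
\[
j_G(\alpha)\hot j_G(\beta)=1_A,\qquad j_G(\beta)\hot j_G(\alpha)=j_G(\gamma).
\]
I emphasize that, in contrast with the groupoid approach of Subsection~\ref{subsec:ekd}, this route does \emph{not} require an equivariant Poincar\'e duality for $\E$: only the existence of $\alpha,\beta$ and the behaviour of $\gamma$ on proper coefficients will be used.

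For the unit I start from the canonical class $\delta_{\mathrm{BC}}\in\KK{}{\C}{C^*_r(G)\otimes B}$ implementing the Baum--Connes assembly map (which exists unconditionally) and set $\delta=\delta_{\mathrm{BC}}\hot\bigl(j_G(\beta)\hatotimes 1_B\bigr)\in\KK{}{\C}{A\otimes B}$. For the counit I use the Dirac morphism: via the dual Green--Julg isomorphism $\KK{G}{C_0(\E)\otimes D}{\C}\cong\KK{}{(C_0(\E)\otimes D)\rtimes G}{\C}$ (valid because $C_0(\E)\otimes D$ is proper for every $G$-algebra $D$) together with a cutoff function on the $G$-compact space $\E$, $\alpha$ produces the ``$\gamma$-morphism'' counit $d_\gamma\in\KK{}{C^*_r(G)\otimes B}{\C}$ of the pair $(C^*_r(G),B)$; I then set $d=\bigl(j_G(\alpha)\hatotimes 1_B\bigr)\hot d_\gamma\in\KK{}{A\otimes B}{\C}$. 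The passage between $(C_0(\E)\otimes D)\rtimes G$ and $B\otimes(D\rtimes G)$ is mediated throughout by the canonical comultiplication homomorphism $\Delta\colon (C_0(\E)\otimes D)\rtimes G\to B\otimes(D\rtimes G)$, which is a $\KKK$-equivalence since the $G$-action on $\E$ is proper and amenable (Remark~\ref{r:cross}). Both $\alpha$ and $\beta$ being of degree $0$, we are in the case $i=0$ of Definition~\ref{def:swd}.

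With these choices I would compute the two elements of Lemma~\ref{lem:spoin}. By naturality of the Kasparov product, contracting the $B$-strand expresses $\Lambda_A=\delta\hot_B d$ as $j_G(\alpha)\hot\Lambda'\hot j_G(\beta)$, where $\Lambda'\in\KK{}{C^*_r(G)}{C^*_r(G)}$ is the corresponding element for the pair $(C^*_r(G),B)$ built from $\delta_{\mathrm{BC}}$ and $d_\gamma$; by construction this $\Lambda'$ is precisely $j_G(\gamma)$, the action of $\gamma$ on $C^*_r(G)$ which governs surjectivity of the assembly map. Hence
\[
\Lambda_A=j_G(\alpha)\hot j_G(\gamma)\hot j_G(\beta)=j_G\bigl(\alpha\hot\gamma\hot\beta\bigr)=j_G\bigl((\alpha\hot\beta)\hot(\alpha\hot\beta)\bigr)=j_G(1_{P_\C})=1_A,
\]
using $\gamma=\beta\hot\alpha$ and $\alpha\hot\beta=1_{P_\C}$. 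Dually, contracting the $A$-strand of $\Lambda_B=\delta\hot_A d$ inserts $j_G(\beta)\hot j_G(\alpha)=j_G(\gamma)$ and reduces $\Lambda_B$, by idempotency of $\gamma$, to the action of $\gamma$ on the proper algebra $B=C_0(\E)\rtimes G$, that is, to $j_G\bigl(\tau_{C_0(\E)}(\gamma)\bigr)$.

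The crux of the proof --- and the step I expect to be the main obstacle --- is to upgrade this last expression from a $K$-theoretic identity to an identity of $\KKK^G$-classes: one must show $\tau_{C_0(\E)}(\gamma)=1_{C_0(\E)}$ in $\KK{G}{C_0(\E)}{C_0(\E)}$, not merely that $\gamma$ induces the identity on $K_\ast(B)$. I would deduce this from the Meyer--Nest localization picture. Since $\alpha$ is a weak equivalence, the mapping cone of $\tau_{C_0(\E)}(\alpha)\colon P_\C\otimes C_0(\E)\to C_0(\E)$ restricts to a contractible object over every finite subgroup; as both $P_\C\otimes C_0(\E)$ and $C_0(\E)$ are proper, they lie in the localizing subcategory of $\KKK^G$ generated by algebras induced from finite subgroups, and on that subcategory a weak equivalence is an isomorphism. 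Thus $\tau_{C_0(\E)}(\alpha)$ is invertible with inverse $\tau_{C_0(\E)}(\beta)$, whence $\tau_{C_0(\E)}(\gamma)=\tau_{C_0(\E)}(\beta\hot\alpha)=1_{C_0(\E)}$ and $\Lambda_B=1_B$. Combining this with $\Lambda_A=1_A$, Lemma~\ref{lem:spoin} shows that $d$ and $\delta$ induce mutually inverse isomorphisms on $K$-theory (so that $A$ and $B$ are weakly dual) and, simultaneously, that the conditions $\Lambda_A=1_A$ and $\Lambda_B=(-1)^0 1_B$ of Definition~\ref{def:swd} hold. The two facts doing all the work are, therefore, that replacing $P$ by its summand $P_\C$ makes $\alpha\hot\beta=1_{P_\C}$ hold on the nose, and that the idempotent $\gamma$ acts as the genuine identity --- not merely a $K$-theory isomorphism --- on the proper coefficient algebra $C_0(\E)$.
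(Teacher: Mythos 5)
Your overall architecture coincides with the paper's own proof (Theorem \ref{thm:pdual}): there the unit and counit for the pair $(P_\C\rtimes G,\,C_0(\E)\rtimes G)$ are defined as $\delta_{P_\C\rtimes G}=\delta\hot_{C^*_r(G)}q$ and $d_{P_\C\rtimes G}=i\hot_{C^*_r(G)}d'$, where $(i,q)$ split the idempotent $\jmath^G_r(\gamma)$, and your $\jmath^G_r(\alpha),\jmath^G_r(\beta)$ are exactly such a splitting, realized by descending $\alpha,\beta$ rather than by Neeman's abstract summand construction (a legitimate simplification, since after passing to $P_\C$ one does have $\alpha\hot_\C\beta=1_{P_\C}$ and $\beta\hot_{P_\C}\alpha=\gamma$); your evaluation $\Lambda_A=\jmath^G_r(\alpha\hot\gamma\hot\beta)=1_A$ is the paper's computation in element form. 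Where you genuinely deviate is $\Lambda_B$: the paper disposes of it by the left-inverse trick (second identity of Proposition \ref{prop:main}: $\delta\hot_B-$ implements the assembly map, $-\hot_{C^*_r(G)}d'$ implements its standard left inverse, so the composite is the identity on $\KKK(B,B)$, and one evaluates at $1_B$), whereas you reduce it to $\tau_{C_0(\E)}(\gamma)=1_{C_0(\E)}$ in $\KKK^G$, proved by a Meyer--Nest argument. That argument is correct (the cone of a weak equivalence between objects of the localizing subcategory generated by compactly induced algebras lies in that subcategory and is weakly contractible, hence vanishes) and recovers the classical Kasparov--Tu fact that $\gamma$ restricts to the identity on proper algebras; granted the identification discussed below, it is a valid, more structural alternative to the paper's trick.

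The genuine gap is the step you pass over with ``by construction'': the identity $\Lambda'=\delta_{\mathrm{BC}}\hot_B d_\gamma=\jmath^G_r(\gamma)$, and likewise the identification (invoked under ``by idempotency of $\gamma$'') of $\bigl(\delta_{\mathrm{BC}}\hot(\jmath^G_r(\gamma)\hatotimes 1_B)\bigr)\hot_{C^*_r(G)}d_\gamma$ with $\jmath^G_r\bigl(\tau_{C_0(\E)}(\gamma)\bigr)$. Neither is formal: nothing in the definitions of $\delta_{\mathrm{BC}}$ (the class of $\Delta(p_G)$, Definition \ref{def:cunit}) and of $d_\gamma$ forces their Kasparov product to have anything to do with $\gamma$. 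These identities are precisely Proposition \ref{prop:main} (respectively Proposition \ref{prop:gammaG} and the Remark following Proposition \ref{prop:czeroone} in the $(\gamma)$-picture), i.e.\ the technical heart of the paper: one represents $\beta$ and $\alpha$ by explicit cycles, assembles the counit using the Kasparov Technical Theorem, and verifies at the cycle level that compressing the descended module $H\otimes\ell^2(G)\rtimes_r G$ by the cutoff projection $p_G$ yields $H\rtimes_r G$, with compressed operator the average $\sum_g g(c)Tg(c)$, i.e.\ the descent of a representative of $\gamma$. Since both of your displayed computations of $\Lambda_A$ and $\Lambda_B$ take this identification as their starting point, the proposal is incomplete at its core, and your closing claim that the two facts ``doing all the work'' are $\alpha\hot\beta=1_{P_\C}$ and $\tau_{C_0(\E)}(\gamma)=1_{C_0(\E)}$ misplaces the difficulty; indeed the $B$-side identification you rely on is, if anything, harder than the paper's route to $\Lambda'_{C_0(\E)\rtimes G}=1$, which deliberately avoids any $\KKK^G$-level statement about $\gamma$. (A minor further point: your assertion that the comultiplication $(C_0(\E)\otimes D)\rtimes G\to B\otimes(D\rtimes G)$ is a $\KKK$-equivalence is unjustified --- Remark \ref{r:cross} gives full $=$ reduced crossed products, not this --- but it is also not needed, as Lemma \ref{lem:desfact} is what mediates that passage.)
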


A few more comments about this theorem. The source of the Dirac morphism (the ``simplicial approximation'' in \cite{nestmeyer:loc}) can be obtained in a variety of ways: by appealing to the Brown representability theorem, by considering the left adjoint to the embedding functor of projective objects, or by constructing the appropriate homotopy colimit from a projective resolution of $\C$ (here, ``projective'' is to be understood in a relative sense, i.e., with respect to the homological ideal of weakly contractible objects). Even though $P_\C$ may not be a proper algebra in general, its reduced and maximal crossed products are $\KKK$-equivalent. This is because $P_\C$ belongs to the localizing subcategory of $\KKK^G$ generated by proper algebras and the reduced and maximal crossed product functors are triangulated functors and commute with countable direct sums (see \cite{nestmeyer:loc}).

Theorem A provides a fourth characterization of $P_\C$. Namely as the Spanier--Whitehead $K$-dual of the classifying space for proper actions. Note that even though our statement is only available after descent, that is we can only get $P_\C\rtimes G$ and not $P_\C$ via duality, this is only a minor drawback in the case of discrete groups, for the the left-hand side of \eqref{eq:assmapintro} retains the full information of the ``topological'' $K$-theory group through the dual Green--Julg isomorphism
\[
\KK{G}{C_0(\E)}{\C}\cong \KK{}{C_0(\E)\rtimes G}{\C}.
\]
%so that the source of the Baum--Connes map in the localization approach REF becomes $K_*(P\rtimes G)$.

{In the situation where, at the $\KKK$-theory level, the simplicial approximation is equivalent to the data of $G$ acting on the point, we can replace $P_\C\rtimes G$ with $C^*_r(G)$ and obtain Spanier--Whitehead duality for the group as in the next Corollary. If the $\gamma$-element exists, we define the \emph{strong} Baum--Connes conjecture to be the statement that $\jmath_r^G(\gamma)=1_{C^*_r(G)}$ in $\KK{}{C^*_r(G)}{C^*_r(G)}$.}

\begin{corollaryA}\label{cor:corB}
Suppose the $\gamma$-element exists. Then $G$ has Spanier--Whitehead duality if and only if it satisfies the strong Baum--Connes conjecture.
\end{corollaryA}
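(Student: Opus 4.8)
The plan is to derive the Corollary from Theorem~\ref{thm:thmA} by transporting the duality it provides along the descent of the Dirac and dual-Dirac morphisms. Write $\jmath_r^G$ for the reduced descent functor and recall that $\C\rtimes G=C^*_r(G)$, so that the morphisms $\alpha\in\KK{G}{P_\C}{\C}$ and $\beta\in\KK{G}{\C}{P_\C}$ descend to
\[
\jmath_r^G(\alpha)\in\KK{}{P_\C\rtimes G}{C^*_r(G)},\qquad \jmath_r^G(\beta)\in\KK{}{C^*_r(G)}{P_\C\rtimes G}.
\]
Using $\alpha\hot_\C\beta=1_{P_\C}$ and $\beta\hot_{P_\C}\alpha=\gamma$ in $\KK{G}{P_\C}{P_\C}$ and $\KK{G}{\C}{\C}$ respectively, functoriality of descent yields $\jmath_r^G(\alpha)\hot\jmath_r^G(\beta)=1_{P_\C\rtimes G}$ and $\jmath_r^G(\beta)\hot\jmath_r^G(\alpha)=\jmath_r^G(\gamma)$. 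Thus $\jmath_r^G(\alpha)$ is a split epimorphism with a canonical section, and it is a $\KKK$-equivalence precisely when $\jmath_r^G(\gamma)=1_{C^*_r(G)}$, that is, precisely under the strong Baum--Connes conjecture.

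Next I would transport the duality unit $\eta$ and counit $\epsilon$ produced by Theorem~\ref{thm:thmA} for the pair $(P_\C\rtimes G,\,C_0(\E)\rtimes G)$ --- for which $\Lambda_{P_\C\rtimes G}=1$ and $\Lambda_{C_0(\E)\rtimes G}=(-1)^i1$ by construction --- to the pair $(C^*_r(G),\,C_0(\E)\rtimes G)$ by inserting the descended morphisms on the $P_\C\rtimes G$--leg:
\[
\delta:=\eta\hot\bigl(\jmath_r^G(\alpha)\hatotimes 1\bigr),\qquad d:=\bigl(\jmath_r^G(\beta)\hatotimes 1\bigr)\hot\epsilon .
\]
One checks that this $\delta$ is the canonical class implementing the assembly map of \eqref{eq:assmapintro} after the Green--Julg isomorphism, so that these are exactly the classes envisaged in the introduction.

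I would then evaluate the two defect classes of Lemma~\ref{lem:spoin}. On the $C_0(\E)\rtimes G$--side the internal contraction runs through $\jmath_r^G(\alpha)\hot\jmath_r^G(\beta)=1_{P_\C\rtimes G}$, leaving the Theorem~\ref{thm:thmA} value untouched, so $\Lambda_{C_0(\E)\rtimes G}=(-1)^i1$. On the $C^*_r(G)$--side the contraction instead glues the two inserted morphisms into the opposite composite, giving
\[
\Lambda_{C^*_r(G)}=\jmath_r^G(\beta)\hot\jmath_r^G(\alpha)=\jmath_r^G(\gamma).
\]
By Definition~\ref{def:swd} the pair satisfies Spanier--Whitehead $K$-duality iff $\Lambda_{C^*_r(G)}=1_{C^*_r(G)}$ and $\Lambda_{C_0(\E)\rtimes G}=(-1)^i1$; the second holds automatically, while the first reads $\jmath_r^G(\gamma)=1_{C^*_r(G)}$, the strong Baum--Connes conjecture. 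For the reverse implication, the existence of any Spanier--Whitehead dual for $C^*_r(G)$ forces, via uniqueness of duals (Remark~\ref{rem:dualizable}) together with Theorem~\ref{thm:thmA}, a $\KKK$-equivalence $C^*_r(G)\simeq P_\C\rtimes G$; tracing the canonical (assembly) counit through it identifies the equivalence with $\jmath_r^G(\alpha)$ and hence returns $\jmath_r^G(\gamma)=1$.

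The main obstacle I anticipate is the identity $\Lambda_{C^*_r(G)}=\jmath_r^G(\gamma)$: verifying it amounts to pushing the two insertions through the flip $\sigma$ and the exterior factors in the explicit formula of Lemma~\ref{lem:spoin}, and confirming that they assemble into $\jmath_r^G(\beta)\hot\jmath_r^G(\alpha)$ rather than the trivial composite $\jmath_r^G(\alpha)\hot\jmath_r^G(\beta)$; associativity of the Kasparov product and the compatibility of descent with exterior tensor products are what make this bookkeeping close. A subtler point lies in the converse direction, where one must upgrade the $K$-theoretic Baum--Connes isomorphism (which the abstract equivalence a priori only guarantees) to the genuine $\KKK$-level identity $\jmath_r^G(\gamma)=1_{C^*_r(G)}$; this is precisely where the canonicity of the assembly counit---pinning the equivalence to $\jmath_r^G(\alpha)$---is indispensable.
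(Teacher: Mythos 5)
Your proposal is correct in substance, but it reaches the corollary by a genuinely different route from the paper, which proves the two implications separately as Theorem \ref{thm:swdm} and Theorem \ref{thm:equivswdbc}. For ``strong Baum--Connes $\Rightarrow$ duality'' the paper does not pass through Theorem \ref{thm:thmA} at all: it works with the explicit counit $d'=(\jmath^G_r(\beta))\,\hot_{P\rtimes_rG}\,s_P\,\hot_{P}\,\alpha$ and proves by a direct cycle-level computation (Proposition \ref{prop:main}) that $\Lambda'_{C^\ast_r(G)}=\jmath^G_r(\gamma)$ and $\Lambda'_{C_0(\E)\rtimes G}=1$; your transport of the Theorem \ref{thm:thmA} data along $\jmath^G_r(\alpha)$, $\jmath^G_r(\beta)$ reproduces these identities formally (and your bookkeeping is right: the $C_0(\E)\rtimes G$-contraction inserts $\jmath^G_r(\alpha\hot_\C\beta)=1_{P_\C\rtimes G}$, the $C^\ast_r(G)$-contraction produces $\jmath^G_r(\beta\hot_{P_\C}\alpha)=\jmath^G_r(\gamma)$). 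For the converse the divergence is larger: the paper deduces from duality that $C^\ast_r(G)$ satisfies the UCT (Corollary \ref{cor_SWUCT}, via compact $=$ dualizable in the bootstrap category, Proposition \ref{prop:compdual}) and then upgrades the $K$-theoretic isomorphism induced by $\jmath^G_r(\alpha)$ to a $\KKK$-equivalence by Blackadar's UCT theorem, whereas you argue purely categorically by uniqueness of duals. Your route is more elementary (no bootstrap category, no classification theorem) but needs the $\gamma$-element throughout; the paper's route buys more, since its Theorem \ref{thm:equivswdbc} gives ``duality $\Rightarrow$ strong Baum--Connes'' without assuming $\gamma$ exists, and Corollary \ref{cor_SWUCT} is of independent interest.

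Two steps in your write-up need shoring up, and both have the same source: Theorem \ref{thm:thmA} cannot be used as a black box; you need its duality data in the explicit form of the paper's proof of Theorem \ref{thm:pdual}, namely unit $\delta_{P_\C\rtimes G}=\delta_G\,\hot_{C^\ast_r(G)}\,q_{P_\C\rtimes G}$ and counit $i_{P_\C\rtimes G}\,\hot_{C^\ast_r(G)}\,d'$, where $i_{P_\C\rtimes G},q_{P_\C\rtimes G}$ split the idempotent $\jmath^G_r(\gamma)$ and may be identified with $\jmath^G_r(\alpha)$, $\jmath^G_r(\beta)$ by uniqueness of idempotent splittings. First, your ``one checks that $\delta$ is the canonical class'': with the explicit unit, your transported unit is $\delta_G\,\hot\,(\jmath^G_r(\gamma)\hatotimes 1)$, which equals $\delta_G$ under the hypothesis $\jmath^G_r(\gamma)=1_{C^\ast_r(G)}$ of that implication (without it one would need the absorption property coming from $\tau_{C_0(\E)}(\gamma)=1_{C_0(\E)}$). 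Second, in the converse, ``pinning the equivalence to $\jmath^G_r(\alpha)$'' can be made precise as follows: the canonical isomorphism $\phi\colon C^\ast_r(G)\to P_\C\rtimes G$ between the two duals of $C_0(\E)\rtimes G$ intertwines the units, so $\delta_G\,\hot\,(\phi\hatotimes 1)=\delta_G\,\hot\,(q_{P_\C\rtimes G}\hatotimes 1)$; since the assumed duality makes $f\mapsto\delta_G\,\hot\,(f\hatotimes 1)$ bijective, $\phi=q_{P_\C\rtimes G}$, hence $q_{P_\C\rtimes G}$ is invertible, and together with $q_{P_\C\rtimes G}\circ i_{P_\C\rtimes G}=1_{P_\C\rtimes G}$ this forces $i_{P_\C\rtimes G}=q_{P_\C\rtimes G}^{-1}$ and therefore $\jmath^G_r(\gamma)=i_{P_\C\rtimes G}\circ q_{P_\C\rtimes G}=1_{C^\ast_r(G)}$. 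You correctly flag that the abstract $\KKK$-equivalence $C^\ast_r(G)\simeq P_\C\rtimes G$ alone would not suffice; the unit-compatibility step above is exactly what closes that gap.
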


{In light of the result above, we can view the notion of Spanier--Whitehead $K$-duality for $G$ as a homotopy-theoretic characterization of the strong Baum--Connes conjecture (cf. Remark \ref{rem:spectra}).}

The main application of the previous corollary is summarized in the result below.

\begin{corollaryA} All a-T-menable groups which admit a $G$-compact model of $\E$ have Spanier--Whitehead $K$-duality.
Examples of a-T-menable groups are the following:
\begin{itemize}
\item All groups which act properly, affine-isometrically and co-compactly on a finite-dimensional Euclidean space.
\item All co-compact lattices of simple Lie groups $\mathrm{SO}(n, 1)$ or $\mathrm{SU}(n,1)$.
\item All groups which act co-compactly on a tree.
\end{itemize}
\end{corollaryA}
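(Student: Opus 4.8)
The plan is to reduce the entire statement to Corollary \ref{cor:corB}. That corollary asserts, for any $G$ whose $\gamma$-element exists, that $G$ has Spanier--Whitehead duality exactly when the strong Baum--Connes conjecture $\jmath_r^G(\gamma)=1_{C^*_r(G)}$ holds. Since every group under consideration is assumed to admit a $G$-compact model of $\E$, the work splits into two tasks: first, to show that a-T-menability forces both the existence of the $\gamma$-element and the strong Baum--Connes conjecture; and second, to check that each of the three listed families genuinely consists of a-T-menable groups carrying a $G$-compact model of $\E$.

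For the first task I would invoke the Higson--Kasparov theorem. A proper affine-isometric action of $G$ on a real Hilbert space --- which is precisely what a-T-menability provides --- yields, through the Bott--Dirac construction on that Hilbert space, a separable proper $G$-$C^*$-algebra $P$ together with classes $\alpha\in\KK{G}{P}{\C}$ and $\beta\in\KK{G}{\C}{P}$ whose Kasparov product satisfies $\gamma=\beta\,\hot_P\,\alpha=1_\C$ in $\KK{G}{\C}{\C}$. In particular the $\gamma$-element exists and equals $1_\C$. Applying the descent ring homomorphism $\jmath_r^G\colon\KK{G}{\C}{\C}\to\KK{}{C^*_r(G)}{C^*_r(G)}$ appearing in \eqref{ringhoms} then gives $\jmath_r^G(\gamma)=\jmath_r^G(1_\C)=1_{C^*_r(G)}$, which is the strong Baum--Connes conjecture. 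Corollary \ref{cor:corB} accordingly delivers Spanier--Whitehead duality for every a-T-menable $G$ admitting a $G$-compact model of $\E$.

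For the second task I would treat the families in turn. A proper, affine-isometric, co-compact action on a finite-dimensional Euclidean space is itself a witness of the Haagerup property, so such groups are a-T-menable; moreover the Euclidean space is contractible with affine, hence contractible, fixed-point sets under finite subgroups, so it models $\E$, and co-compactness makes this model $G$-compact. Co-compact lattices in $\mathrm{SO}(n,1)$ and $\mathrm{SU}(n,1)$ inherit a-T-menability from these rank-one Lie groups, using that the Haagerup property passes to closed subgroups; each such lattice acts properly and co-compactly on the associated negatively curved symmetric space, a contractible proper $G$-space realizing a $G$-compact $\E$. Finally, a group acting properly and co-compactly on a tree is a-T-menable via the wall (median) structure of the tree, and the tree, being one-dimensional and contractible with finite-subgroup fixed-point sets again contractible subtrees, is a $G$-compact model of $\E$.

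The reduction to Corollary \ref{cor:corB} is formal; the genuine content sits in the two imported facts. The decisive one is the Higson--Kasparov computation $\gamma=1_\C$, which single-handedly supplies the strong Baum--Connes conjecture and which I would use as a black box rather than reprove. The remaining verifications --- that each Euclidean space, symmetric space, or tree really realizes $\E$, with the correct contractibility of fixed-point sets for finite subgroups --- are standard but must be recorded to guarantee the $G$-compact model hypothesis required by Corollary \ref{cor:corB}.
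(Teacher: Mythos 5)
Your proposal is correct and matches the paper's own argument: the paper likewise deduces the corollary from the main duality theorem (condition (1) plus $\jmath^G_r(\gamma)=1_{C^*_r(G)}$) together with the Higson--Kasparov theorem, which supplies $\gamma=1_\C$ in $\KK{G}{\C}{\C}$ for a-T-menable groups, whence descent gives the strong Baum--Connes conjecture. The only difference is cosmetic: the paper goes on to exhibit explicit cycles with property $(\gamma)$ for each of the three families (Witten-perturbed de Rham operators, Bott--Dirac operators on trees), but that material is supplementary and not needed for the corollary itself.
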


Having such an explicit duality should be useful. For example, in principle, it allows us to compute the Lefschetz number of an automorphism of $C^\ast_r(G)$ or more generally of a morphism $f$ in $\KKK(C^\ast_r(G), C^\ast_r(G))$ (see \cite{DEM:traces,eme:lefschetz}).

If a cycle with property $(\gamma)$ is found, then we can deduce the duality in complete analogy with the case of the $\gamma$-element (this is how the definition of property $(\gamma)$ was designed). However in this case we do not have information on the localization at the weakly contractible objects \cite{meyernest:tri}. So we get the corresponding statement for Corollary \ref{cor:corB}, but not for Theorem \ref{thm:thmA}.  

\begin{theoremA}\label{thm:thmC}
Suppose there is a $(\gamma)$-element $x\in \KK{G}{\C}{\C}$ for $G$. If $\jmath^G_r(x)=1_{C^*_r(G)} \in \KK{G}{C^\ast_r(G)}{C^\ast_r(G)}$, then $G$ has Spanier--Whitehead duality.
\end{theoremA}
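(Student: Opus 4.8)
The plan is to exhibit the duality classes explicitly and then to reduce the whole statement to two identities between $\KKK$-classes. For the unit I take $\delta\in\KKK(\C,C^*_r(G)\otimes C_0(\E)\rtimes G)$ to be the class of Definition~\ref{def:cunit}, namely the image of the Baum--Connes assembly map under the dual Green--Julg isomorphism. For the counit I start from a cycle $(H,T)$ with property $(\gamma)$ representing $x$, form the associated $(\gamma)$-morphism $\tilde{x}\in\KKK^G(C^*_r(G)\otimes C_0(\E),\C)$, and push it forward through the Green--Julg isomorphism to obtain $d\in\KKK(C^*_r(G)\otimes C_0(\E)\rtimes G,\C)$. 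Both classes lie in degree $0$, so the shift in Definition~\ref{def:wpoin} is $i=0$ and the relations required by Definition~\ref{def:swd} read $\Lambda_{C^*_r(G)}=1_{C^*_r(G)}$ and $\Lambda_{C_0(\E)\rtimes G}=1_{C_0(\E)\rtimes G}$.

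The first point is that these two identities suffice, and that they deliver the underlying weak duality for free. Indeed, by Lemma~\ref{lem:spoin} (with $i=0$) the composites $\delta_j\circ d_j$ and $d_j\circ\delta_j$ are Kasparov multiplication by $\Lambda_{C^*_r(G)}$ and $\Lambda_{C_0(\E)\rtimes G}$ respectively; hence once both $\Lambda$-classes equal the relevant units, the maps $d_j$ and $\delta_j$ are mutually inverse isomorphisms, so weak Spanier--Whitehead duality holds and Definition~\ref{def:swd} is then satisfied verbatim. Thus everything reduces to computing the two $\Lambda$-classes.

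For $\Lambda_{C^*_r(G)}=\delta\hot_{C_0(\E)\rtimes G}d$ I would show that this composite is exactly $\jmath_r^G(x)$, the reduced descent of the $(\gamma)$-element. The point is that $\delta$ and $d$ are the two halves, assembled via (dual) Green--Julg from the equivariant data, of the factorization of $x$ through $C_0(\E)$ that is built into property $(\gamma)$; functoriality of the descent functor and associativity of the Kasparov product collapse their composite over $C_0(\E)\rtimes G$ to $\jmath_r^G$ of the underlying equivariant product, which is the idempotent $x$ (here condition~(1) in the definition of property $(\gamma)$, restriction to finite subgroups, is what guarantees we are looking at the correct class in $\KKK^G(\C,\C)$). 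The standing hypothesis $\jmath_r^G(x)=1_{C^*_r(G)}$ then gives $\Lambda_{C^*_r(G)}=1_{C^*_r(G)}$ at once; this is the incarnation of surjectivity of the assembly map in the dictionary recorded above.

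The main obstacle is the remaining identity $\Lambda_{C_0(\E)\rtimes G}=\delta\hot_{C^*_r(G)}d=1_{C_0(\E)\rtimes G}$, which must hold unconditionally and is precisely what property $(\gamma)$ was engineered to guarantee. Unlike the $\gamma$-element situation there is no proper algebra $P$ at hand, and hence no clean relation of the form $\alpha\hot_\C\beta=1_P$ to descend; the identity has to be extracted by hand from the analytic conditions~(2.1) and~(2.2) on $(H,T)$. Concretely, I would expand $\Lambda_{C_0(\E)\rtimes G}$ as in Lemma~\ref{lem:spoin} and build an operator homotopy to the identity cycle on $C_0(\E)\rtimes G$, using the cutoff relation $T-\sum_{g\in G}(g\cdot c)\,T\,(g\cdot c)\in K(H)$ to recover the averaging present in the assembly class $\delta$, and the fact that $g\mapsto[g\cdot f,T]$ lies in $C_0(G,K(H))$ to control the commutators of $T$ with the $C_0(\E)$-action that appear when the product is unwound. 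The delicate part will be managing the summation over $G$ and the $C_0(\E)$-module structure so that the cutoff averaging cancels the twisting introduced by $\delta$ and $d$ exactly, yielding the unit \emph{as a $\KKK$-class} rather than merely an operator acting as the identity on $K$-theory (the latter being the weaker split-injectivity already implied by the mere existence of the $(\gamma)$-element). Granting this, Definition~\ref{def:swd} applies and $G$ has Spanier--Whitehead duality.
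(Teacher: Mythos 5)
Your setup coincides with the paper's: the unit $\delta$ from Definition~\ref{def:cunit}, the counit $d=\mathrm{GJ}(\tilde x)$ built from a cycle $(H,T)$ with property $(\gamma)$, and the reduction via Lemma~\ref{lem:spoin} to the two identities $\Lambda_{C^*_r(G)}=1_{C^*_r(G)}$ and $\Lambda_{C_0(\E)\rtimes G}=1_{C_0(\E)\rtimes G}$. Your first identity, $\Lambda_{C^*_r(G)}=\jmath^G_r(x)$, is the paper's Proposition~\ref{prop:gammaG}; note, however, that its proof uses the analytic condition (2.2) (the cutoff compression identifies $[p_G]\hot_{C_0(\E)\rtimes G}\jmath^G_r(\tilde x)$ with $\jmath^G_r(x)$ through an explicit isomorphism of Hilbert modules, via Lemma~\ref{lem:desfact}), and condition (1) plays no role there. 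Your parenthetical attributing that step to condition (1) has the roles of the two conditions exactly reversed.

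The genuine gap is in your treatment of $\Lambda_{C_0(\E)\rtimes G}$. You propose an operator homotopy to the identity cycle extracted from conditions (2.1) and (2.2) alone; no such argument can exist, because those conditions are blind to the class of $x$: if $(H,T)$ satisfies them, then so does $(H\oplus H, T\oplus T)$ with the direct-sum representation of $C_0(\E)$, which represents $2x$ and violates condition (1), yet every step of your proposed homotopy would apply to it verbatim and would ``prove'' that the corresponding $\Lambda$-class is the unit — false already for the trivial group, where it reads $2=1$ in $\KKK(\C,\C)\cong\Z$. What a cycle-level computation in the spirit of Proposition~\ref{prop:gammaG} can give is only $\Lambda_{C_0(\E)\rtimes G}=\jmath^G_r(x\hatotimes 1_{C_0(\E)})$ (this is the Remark following Proposition~\ref{prop:czeroone}); the remaining step, that this class equals the unit, is exactly where condition (1) must enter, and in the paper it is not an operator homotopy. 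The paper proves Proposition~\ref{prop:czeroone} homologically: by naturality of $\mu^G$ and $\nu^G$ one replaces $B=C_0(\E)\rtimes G$ (with trivial $G$-action) by a weakly equivalent $P_B$ as in Meyer--Nest; since $\mu^G_{P_B}$ is invertible, it suffices that $\mu^G_{P_B}\circ\nu^G_{P_B}$ be the identity, and this composite is the action of $x$ on $K_*(P_B\rtimes G)$, which is the identity by \cite[Theorem 9.3]{nestmeyer:loc} because $P_B\rtimes G$ lies in the localizing subcategory generated by the $B\rtimes F$ for finite $F\subseteq G$ and $\Res_G^F(x)=1$. So the $\KKK$-identity is obtained by first proving a statement against an invertible assembly map and then upgrading; some input of this kind — localization over finite subgroups, or a theorem asserting that classes restricting to $1$ on finite subgroups become the unit after tensoring with a proper algebra — is unavoidable, and your proposal is missing it.
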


\section{General framework}

Let $G$ be a countable discrete group, and $\underline{E}G$ be a $G$-compact model of the universal proper $G$-space. Let $A$ and $B$ be $C^*$-algebras equipped with a $G$-action. If the $G$-action on $B$ is trivial, we recall  the dual Green--Julg isomorphism (\cite{black:kth,valjens:misb,markus:ass})
\[
\mathrm{GJ}\colon \KK{G}{A}{B}\cong \KK{}{A\rtimes G}{B}. 
\]

Given $a\in A$, define $\delta_g^a\in C_c(G,A)\subseteq A\rtimes G$ to be the function
\[
\delta_g^a(t)=
\begin{cases}
a&\text{if $t=g$}\\
0&\text{if $t\neq g$.}
\end{cases}
\]%Dirac mass centered at $g\in G$ with value $a$. 
The \emph{dual coaction} is defined as
\begin{align*}
\Delta\colon A\rtimes G&\to C^*_r(G)\otimes A\rtimes G \\
\delta_g^a&\mapsto g\otimes\delta_g^a.
\end{align*}

Let $c\in C_c(\E)$ be a cutoff function, % i.e., $\sum_{g\in G} g(c)^2=1$,%
and consider the associated projection $p_c\in C_c(G,C_0(\E))\subseteq C_0(\E)\rtimes G$ defined by $p_c(g)=cg(c)$. This projection does not depend on $c$ up to homotopy, hence we will denote it $p_G$ in the sequel.
\begin{definition}\label{def:cunit}
We define the \emph{canonical duality unit} to be the class 
\[
\delta=\delta_G=[\Delta(p_G)]\in \KK{}{\C}{C^*_r(G)\otimes C_0(\E)\rtimes G}.
\]
\end{definition}
The notational dependence on $G$ shall be dropped when clear from the context. {In this paper, whenever we say that $G$ has Spanier--Whitehead duality, we implicitly assume that the duality unit is given as above.}

Let us recall the definition of Kasparov's descent homomorphism \cite{kas:descent}, which plays an important role in this paper. It will be denoted $\jmath^G$ below. Suppose $(\phi,H,T)$ is a Kasparov cycle defining an element of $\KK{G}{A}{B}$. The $G$-action on $H$ will be denoted $U\colon G \to \mathrm{End}_\C(H)$. The element $\jmath^G([\phi,H,T])\in \KK{}{A\rtimes G}{B\rtimes G}$ is defined by the cycle $(\tilde{\phi},H\rtimes G, \tilde{T})$ given as follows. 

The Hilbert $C^*$-module $H\rtimes G$ is the completion of $C_c(G,H)$ with respect to the norm induced by the following $B\rtimes G$-valued inner product:
\begin{equation*}\label{eq:inndesc}
\bra{\xi}\ket{\zeta}(t)=\sum_{g\in G}\beta_{g^{-1}}(\bra{\xi(g)}\ket{\zeta(gt)}),
\end{equation*}
where $\xi,\zeta\in C_c(G,H)$, $t\in G$, and $\beta$ denotes the given $G$-action on $B$. The right action of $B\rtimes G$ is uniquely determined by the formula
\[
(\xi \cdot f)(t)=\sum_{g\in G}\xi(g)\beta_g(f(g^{-1}t)),
\]
where $\xi\in C_c(G,H),f\in C_c(G,B)$ and $t \in G$. The representation of $A \rtimes G$ on $H\rtimes G$ is determined by %(using left module notation) as
\begin{equation*}\label{eq:leftdesc}
(\tilde{\phi}(f)(\xi))(t)= \sum_{g\in G}\phi(f(g))[U(g)(\xi(g^{-1}t))],
\end{equation*}
where $f\in C_c(G,A), \xi\in C_c(G,H)$ and $t \in G$. Finally the operator $\tilde{T}$ is defined by $(\tilde{T}\xi)(t)=T(\xi(t))$ for $\xi\in C_c(G,H)$ and $t \in G$. By using reduced crossed products everywhere, we can similarly defined a ``reduced version'' of the descent homomorphism, denoted $\jmath^G_r$ in the sequel.

\begin{lemma}[{\cite[Proposition 4.7]{markus:ass}}]\label{lem:desfact}
Kasparov's descent homomorphism can be factorized as follows:
\begin{equation*}
\xymatrixcolsep{4pc}\xymatrix{\KK{G}{A}{\C} \ar[d]^-{\mathrm{GJ}} \ar[r]^-{\jmath^G} & \KK{}{A\rtimes G}{C^*(G)}\\
\KK{}{A\rtimes G}{\C} \ar[r]^-{\tau_{C^*(G)}} & \KK{}{C^*(G)\otimes A\rtimes G }{C^*(G)} \ar[u]^-{\Delta^*}.}
\end{equation*}
\end{lemma}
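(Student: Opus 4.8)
The plan is to prove the commutativity $\jmath^G=\Delta^*\circ\tau_{C^*(G)}\circ\mathrm{GJ}$ by a direct comparison of the two Kasparov cycles produced by either route, the crucial point being that the coefficient algebra $\C$ carries the trivial $G$-action, which collapses the descent module $H\rtimes G$ onto $H\otimes C^*(G)$.

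First I would fix a $G$-equivariant cycle $(\phi,H,T)$ representing a class $x\in\KK{G}{A}{\C}$, with unitary $G$-action $U\colon G\to\mathcal B(H)$, and compute the lower route of the square. The dual Green--Julg map sends $(\phi,H,T)$ to its integrated form $(\phi\rtimes U,H,T)\in\KK{}{A\rtimes G}{\C}$, on which $(\phi\rtimes U)(\delta_g^a)=\phi(a)U(g)$. Applying $\tau_{C^*(G)}$ passes to the Hilbert $C^*(G)$-module $H\otimes C^*(G)$ with operator $T\otimes 1$, the extra leg $C^*(G)$ acting on itself by left multiplication $L$; and $\Delta^*$ precomposes the left action with the dual coaction. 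Using $\Delta(\delta_g^a)=g\otimes\delta_g^a$, a one-line computation then shows that the resulting representation $\rho$ of $A\rtimes G$ on $H\otimes C^*(G)$ is determined by $\rho(\delta_g^a)=\phi(a)U(g)\otimes L_g$, where $L_g$ denotes left multiplication by $g$, and that the operator is $T\otimes 1$.

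Next I would identify this cycle with the descent cycle $(\tilde\phi,H\rtimes G,\tilde T)$ through an explicit unitary. Define $W\colon C_c(G,H)\to H\otimes C^*(G)$ by $W(\xi)=\sum_{g}\xi(g)\otimes g$. Since the action on $\C$ is trivial, the descent inner product reduces to $\bra{\xi}\ket{\zeta}(t)=\sum_g\langle\xi(g),\zeta(gt)\rangle$, and the same expression is exactly the value at $t$ of the $C^*(G)$-valued inner product $\langle W(\xi),W(\zeta)\rangle$; hence $W$ is isometric, and as its range contains the dense subspace $H\otimes\C[G]$ it extends to a unitary $H\rtimes G\cong H\otimes C^*(G)$. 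I would then check that $W$ intertwines all remaining data: it carries the right action $(\xi\cdot f)(t)=\sum_g\xi(g)f(g^{-1}t)$ to right multiplication in $C^*(G)$; it sends $\tilde T$ (acting pointwise, $(\tilde T\xi)(t)=T(\xi(t))$) to $T\otimes 1$; and, matching $(\tilde\phi(\delta_g^a)\xi)(t)=\phi(a)U(g)\xi(g^{-1}t)$ against $\rho(\delta_g^a)=\phi(a)U(g)\otimes L_g$ after the substitution $t\mapsto gs$, it intertwines $\tilde\phi$ with $\rho$. This exhibits the two cycles as unitarily equivalent, yielding the claimed equality of classes in $\KK{}{A\rtimes G}{C^*(G)}$.

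The main obstacle is purely bookkeeping: keeping the group elements and their inverses aligned across the three constructions—the twist $U(g)$ in the descent representation, the element $g$ appearing in the coaction $\Delta$, and the left- versus right-multiplication conventions on $C^*(G)$—all of which must reconcile under the single change of variables hidden in $W$. A secondary point to record is that $W$ need only be verified as an isometry on the dense $*$-subalgebra $C_c(G,H)$, and that the Fredholm and compactness conditions making $(\phi\rtimes U,H,T)$ a genuine cycle are transported automatically by the unitary $W$, so no separate analytic estimate is required.
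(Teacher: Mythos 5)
Your proof is correct. The paper does not actually prove this lemma---it is quoted from \cite[Proposition 4.7]{markus:ass}---and your argument is precisely the standard one behind that citation: since the coefficient algebra $\C$ carries the trivial $G$-action, the unitary $W(\xi)=\sum_{g}\xi(g)\otimes g$ identifies the descent module $H\rtimes G$ with $H\otimes C^*(G)$, and a direct check (which you carry out correctly, including the intertwining $W\tilde{\phi}(\delta^a_g)=(\phi(a)U(g)\otimes L_g)W$ matching $\Delta^*\circ\tau_{C^*(G)}\circ\mathrm{GJ}$ on cycles) gives the claimed factorization.
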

When the canonical map $A\rtimes G\to A\rtimes_r G$ is an isomorphism (e.g., if $G$ acts properly on $A$), the version of the previous lemma with \emph{reduced} crossed products also holds. See Remark \ref{r:cross}.

\begin{lemma}[{\cite[Section 2]{valjens:misb}}]\label{lem:gjj}
Let $A$ and $B$ be $G$-$C^*$-algebras and suppose the $G$-action on $B$ is trivial. Consider an element $x\in \KK{G}{A}{A}$. The following diagram commutes.
\begin{equation*}
%\xymatrixcolsep{4pc}
\xymatrix{\KK{G}{A}{B} \ar[r]^-{\mathrm{GJ}} \ar[d]^-{x\hatotimes -} & \KK{}{A\rtimes G}{B} \ar[d]^-{\jmath^G(x)\hatotimes -}\\
\KK{G}{A}{B} \ar[r]^-{\mathrm{GJ}} & \KK{}{A\rtimes G}{B} .}
\end{equation*}
\end{lemma}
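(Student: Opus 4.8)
The plan is to reduce the commutativity of the square to two formal inputs: the multiplicativity of Kasparov's descent homomorphism with respect to the Kasparov product, and a factorization of the dual Green--Julg isomorphism through descent. To set up the latter, note first that since the $G$-action on $B$ is trivial, the pair $(\mathrm{id}_B, 1)$ consisting of the identity representation of $B$ and the trivial unitary representation $g\mapsto 1$ of $G$ is covariant; it therefore integrates to a $*$-homomorphism $e_B\colon B\rtimes G\to B$, $\delta_g^b\mapsto b$ (equivalently $e_B=\mathrm{id}_B\otimes\epsilon$ under $B\rtimes G\cong B\otimes C^*(G)$, where $\epsilon\colon C^*(G)\to\C$ is the trivial representation, which is why full crossed products must be used throughout for non-amenable $G$, consistently with Lemma \ref{lem:desfact}). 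Write $[e_B]\in\KK{}{B\rtimes G}{B}$ for its class.

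The key step, and the main obstacle, is to establish that for every $y\in\KK{G}{A}{B}$ one has
\[
\mathrm{GJ}(y)=\jmath^G(y)\,\hot_{B\rtimes G}\,[e_B],\qquad \jmath^G(y)\in\KK{}{A\rtimes G}{B\rtimes G}.
\]
On a cycle arising from an equivariant $*$-homomorphism $\phi\colon A\to B$ this is immediate: $\jmath^G(\phi)$ sends $\delta_g^a$ to $\delta_g^{\phi(a)}$, and composing with $e_B$ recovers the integrated form $\delta_g^a\mapsto\phi(a)$, which is exactly $\mathrm{GJ}(\phi)$. For a general Kasparov cycle the identity can be read off from Lemma \ref{lem:desfact} in the case $B=\C$: composing that factorization $\jmath^G=\Delta^*\circ\tau_{C^*(G)}\circ\mathrm{GJ}$ with the map $\epsilon_*$ induced by $\epsilon$, and using the counit identity $(\epsilon\otimes\mathrm{id})\circ\Delta=\mathrm{id}_{A\rtimes G}$ for the dual coaction, collapses $\Delta^*\circ\tau_{C^*(G)}$ to the identity and leaves precisely $\mathrm{GJ}(y)=\epsilon_*\jmath^G(y)$. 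For general trivially acted $B$ the same argument applies verbatim with $e_B$ in place of $\epsilon$ and $B\rtimes G$ in place of $C^*(G)$, or one verifies the identity directly on cycles.

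With this factorization in hand the lemma is purely formal. Invoking the multiplicativity of Kasparov's descent homomorphism, $\jmath^G(x\,\hot_A\, y)=\jmath^G(x)\,\hot_{A\rtimes G}\,\jmath^G(y)$, together with associativity of the Kasparov product, I compute
\begin{align*}
\mathrm{GJ}(x\,\hot_A\, y) &= \jmath^G(x\,\hot_A\, y)\,\hot_{B\rtimes G}\,[e_B]\\
&= \bigl(\jmath^G(x)\,\hot_{A\rtimes G}\,\jmath^G(y)\bigr)\,\hot_{B\rtimes G}\,[e_B]\\
&= \jmath^G(x)\,\hot_{A\rtimes G}\,\bigl(\jmath^G(y)\,\hot_{B\rtimes G}\,[e_B]\bigr)\\
&= \jmath^G(x)\,\hot_{A\rtimes G}\,\mathrm{GJ}(y),
\end{align*}
which is exactly the asserted commutativity of the diagram. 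Thus all the genuine content sits in the factorization $\mathrm{GJ}=[e_B]_*\circ\jmath^G$; once that is secured, multiplicativity of descent and associativity of the product finish the argument.
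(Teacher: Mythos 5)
Your proof is correct, but note that the paper itself offers no proof of this lemma at all: it is quoted from Mislin--Valette (the cited ``Section 2''), so there is nothing internal to compare against. Your argument is a clean, self-contained derivation, and it is the natural one: everything rests on the factorization $\mathrm{GJ}(y)=\jmath^G(y)\,\hot_{B\rtimes G}\,[e_B]$, after which multiplicativity of Kasparov's descent and associativity of the product finish the job. Your verification of that factorization is sound; the cycle-level check works because $(H\rtimes G)\otimes_{e_B}B\cong H$ via $\xi\mapsto\sum_g\xi(g)$, which carries the descended representation to the integrated covariant representation and $\tilde T$ to $T$, i.e.\ exactly to the standard dual Green--Julg cycle. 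Two small points deserve flagging. First, the route through Lemma \ref{lem:desfact} strictly speaking needs that lemma for general trivially-acted coefficients $B$, whereas the paper states it only for $B=\C$; the extension (replace $\tau_{C^*(G)}$-then-$\Delta^*$ with the same maps valued in $B\rtimes G\cong B\otimes C^*(G)$) is routine, and in any case your fallback ``verify directly on cycles'' closes this gap, so it is a presentational caveat rather than a mathematical one. Second, your insistence on full crossed products is not pedantry but a genuine correction of a looseness in the paper, which writes $C^*_r(G)$ in the definition of the dual coaction yet $C^*(G)$ in Lemma \ref{lem:desfact}: the counit identity $(\epsilon\otimes\mathrm{id})\circ\Delta=\mathrm{id}$ and the homomorphism $e_B$ only exist at the level of the full crossed product unless $G$ is amenable (or the action is proper, as in the paper's applications via Remark \ref{r:cross}).
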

%\begin{proof}
%It is easier to prove the statement by considering $\mathrm{GJ}^{-1}=\phi$ in place of $\mathrm{GJ}$. Given a cycle $(H,\pi,T)$ for $\KK{}{A\rtimes G}{B}$, the map $\phi$ equips $H$ with a $G$-action by considering an approximate unit for $A$ and the representation $\pi$. Now, the representation in $\phi(\jmath^G(x)\hatotimes [H,\pi,T])$ is no longer non-degenerate, and its non-degenerate closure is easily seen to be isomorphic to $x\hatotimes \phi([H,\pi,T])$.
%\end{proof}

It follows from Lemma \ref{lem:desfact} that we have the following commutative diagram 
\begin{align*}
\xymatrixcolsep{7pc} \xymatrix{
\KKK^G(C_0(\E), B) \ar[d]_-{\mathrm{GJ}}^{\cong}  \ar[r]^-{\mu^G_B} & \KKK(\mathbb{C}, B\otimes C^\ast_r(G)) \ar[d]^-{=}    \\
\KKK(C_0(\E)\rtimes G, B)   \ar[r]^-{\delta\,\hot_{C_0(\E)\rtimes G}-}   & \KKK(\mathbb{C}, B\otimes C^\ast_r(G))  
}
\end{align*} 

Since the definition of the duality counit requires additional information, and will depend on the choice of ``$\gamma$-like'' element, the rest of this section gets split in two parts. The torsion-free case is treated in detail in Subsection \ref{subsec:tfree}
%\begin{equation}\label{eq:assmap}
%\xymatrixcolsep{7pc}\xymatrix{\KK{}{C_0(\E)\rtimes G}{\C}  \ar[r]_-{[\Delta(p)]\hot_{C_0(\E)\rtimes G}-}  & \KK{}{\C}{C^*_r(G)}.}
%\end{equation}
\subsection{Argument based on the \texorpdfstring{$(\gamma)$}{(gamma)}-element}

Let $(H, T)$ be a $G$-equivariant Kasparov cycle with property $(\gamma)$. Let $x=[H,T]$ be the corresponding element in $\KKK^G(\mathbb{C}, \mathbb{C})$.  Let 
\begin{equation}\label{eq:gammamorphism}
\tilde{x}=[H\otimes \ell^2(G), \rho\otimes\pi, (g(T))_{g\in G}] \in \KK[]{G}{C^*_r(G)\otimes C_0(\E)}{\C}.
\end{equation}
Here, $\pi\colon C_0(\E)\to B(H)$ is the representation witnessing the conditions for property $(\gamma)$ of $(H,T)$, $\rho$ stands for the right regular representation, and $C^*_r(G)$ has trivial $G$-action. By means of the Green-Julg isomorphism, we set
\[ 
d=\mathrm{GJ}(\tilde{x}) \in \KKK(C^\ast_r(G)\otimes C_0(\E)\rtimes G , \mathbb{C}).
\]
%The pullback $\sigma^*$ will be suppressed from notation for ease of reading.
%\begin{proposition}
%We have the equality $(\delta_j\circ d_j)(y)=y\,\hot_{C^*_r(G)}\,\jmath_r^G(x)$. In particular, if $x=1\in \KK[0]{G}{\C}{\C}$ then $C^*_r(G)$ is Poincar\'e dual to $C_0(\E)\rtimes G$.
%\end{proposition}
%This follows from \cite[Proposition 4.2]{nish:dsplitb}.
We set $\Lambda_{C^\ast_r(G)}=\delta\,\hot_{C_0(\E)\rtimes G}\, d$ and  $\Lambda_{C_0(\E)\rtimes G}=\delta\,\hot_{C^\ast_r(G)}\, d$. We shall prove
\begin{enumerate}
\item $\Lambda_{C^\ast_r(G)}=\jmath^G_r(x)$ in $\KKK(C^\ast_r(G), C^\ast_r(G))$;
\item $\Lambda_{C_0(\E)\rtimes G}=1_{C_0(\E)\rtimes G}$ in $\KKK(C_0(\E)\rtimes G, C_0(\E)\rtimes G)$.
\end{enumerate}

\begin{proposition}\label{prop:gammaG}
We have the equality $\Lambda_{C^*_r(G)}=\jmath_r^G(x)$.
\end{proposition}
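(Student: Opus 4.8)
The plan is to compute $\Lambda_{C^*_r(G)}=\delta\,\hot_{C_0(\E)\rtimes G}\, d$ directly and identify it with $\jmath_r^G(x)$ by tracing through the definitions of the canonical duality unit $\delta$ and the counit $d$. First I would unwind the definition of $d=\mathrm{GJ}(\tilde x)$, where $\tilde x$ is the element \eqref{eq:gammamorphism} attached to the property-$(\gamma)$ cycle $(H,T)$. The key structural tool is Lemma \ref{lem:desfact}, which factors Kasparov's descent homomorphism as a composite of the Green--Julg isomorphism $\mathrm{GJ}$, the external product $\tau_{C^*_r(G)}$, and the pullback $\Delta^*$ along the dual coaction. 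Since $\delta=[\Delta(p_G)]$ is built precisely from $\Delta$ and the cutoff projection $p_G$, the two definitions are set up to interact through exactly this factorization.

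The core of the computation is to recognize that pairing $\delta$ with $d$ over $C_0(\E)\rtimes G$ reproduces the descent of $x$. Concretely, I would show that
\[
\delta\,\hot_{C_0(\E)\rtimes G}\, d
= \Delta^*\bigl(\tau_{C^*_r(G)}(\mathrm{GJ}^{-1}(d))\bigr),
\]
and then invoke Lemma \ref{lem:desfact} (in its reduced form, which is available here because the $G$-action on $C_0(\E)$ is proper, so that $C_0(\E)\rtimes G\cong C_0(\E)\rtimes_r G$; see Remark \ref{r:cross}) to identify the right-hand side with $\jmath_r^G(\tilde x\,\hot\,\text{(restriction)})$. The point is that the element $\tilde x\in\KK{G}{C^*_r(G)\otimes C_0(\E)}{\C}$ restricts, via the property-$(\gamma)$ condition and the presence of the cutoff function $c$ in $p_G$, to recover the class $x=[H,T]$ in $\KK{G}{\C}{\C}$; condition (2.2) in the definition of property $(\gamma)$ is what guarantees that the compression $\sum_g (g\cdot c)T(g\cdot c)$ agrees with $T$ modulo compacts, so that the cutoff-weighted operator in $\tilde x$ represents the same $K$-homology class as $T$ after taking the product with $\delta$. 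Applying $\jmath_r^G$ to $x$ then yields $\jmath_r^G(x)$ in $\KKK(C^*_r(G),C^*_r(G))$.

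The main obstacle I anticipate is the careful bookkeeping of the associativity and flip conventions in the Kasparov products, together with verifying that the operator appearing in $\tilde x$, namely $(g(T))_{g\in G}$ twisted by the right regular representation $\rho$, is compatible with the left action and inner-product structure of the descent module $H\rtimes G$ described in the paragraph preceding Lemma \ref{lem:desfact}. In particular one must check that compressing by the cutoff projection $p_G=[g\mapsto cg(c)]$ and then taking the inner product over $C_0(\E)\rtimes G$ implements exactly the ``diagonal'' identification that turns the family $(g(T))$ into the descended operator $\tilde T$. Conditions (2.1) and (2.2) of property $(\gamma)$ are designed to make this identification hold up to compact perturbation, so the verification should reduce to showing that the relevant difference operators lie in $K(H\rtimes G)$ and thus do not affect the $\KKK$-class. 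Once these compatibilities are established, the equality $\Lambda_{C^*_r(G)}=\jmath_r^G(x)$ follows from Lemma \ref{lem:desfact} essentially by construction.
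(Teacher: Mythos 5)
Your proposal takes essentially the same route as the paper's proof: both compress the descended class $\jmath^G_r(\tilde x)$ by the cutoff projection $p_G$, identify the compressed module with $H\rtimes_r G$ so that the operator becomes $T'=\sum_{g\in G}(g\cdot c)T(g\cdot c)$, invoke condition (2.2) of property $(\gamma)$ to replace $T'$ by $T$ modulo compacts, and then use Lemma \ref{lem:desfact} to rewrite $[p_G]\,\hot_{C_0(\E)\rtimes G}\,\jmath^G_r(\tilde x)$ as $\delta\,\hot_{C_0(\E)\rtimes G}\,d$. The only flaw is notational: your displayed identity should read $\delta\,\hot_{C_0(\E)\rtimes G}\,d=[p_G]\,\hot_{C_0(\E)\rtimes G}\,\Delta^*\bigl(\tau_{C^*_r(G)}(d)\bigr)$ --- the argument of $\tau_{C^*_r(G)}$ must be $d=\mathrm{GJ}(\tilde x)$ rather than $\mathrm{GJ}^{-1}(d)=\tilde x$, and without the $[p_G]$ factor the two sides lie in different $\KKK$-groups --- though your subsequent discussion of the cutoff compression and the diagonal identification makes clear you intend the correct statement.
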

\begin{proof} 
We claim the Kasparov module
\begin{equation}\label{eq:kasiso}
[p_G]\,\hot_{C_0(\E)\rtimes G}\,\jmath^G_r(\tilde{x})
\end{equation}
is equivalent to $\jmath_r^G(x)$, i.e., there is an isomorphism of Hilbert $C^*$-modules intertwining the representations and the operators (up to a compact perturbation). 

The class in \eqref{eq:kasiso} is represented by
\[
(H\otimes \ell^2(G)\rtimes_r G, (\rho\otimes\pi\rtimes_r 1)(p_G\otimes -), (g(T))_{g\in G}\rtimes_r 1).
\]
We have an isomorphism of $C^*_r(G)$-modules
\begin{equation}\label{eq:isoiso}
H\rtimes_r G\cong (\rho\otimes\pi\rtimes_r 1)(p_G\otimes 1)(H\otimes \ell^2(G)\rtimes_r G)
\end{equation}
given by the assignment
\[
\xi\rtimes_r u_g \mapsto \sum_{h\in G}\pi(c)(h\cdot \xi)\otimes \delta_h\rtimes_r u_{hg},
\]
where $\xi\in H$, $\delta_h\in \ell^2(G)$, and $c$ is a cutoff function defining $p_G$. The inverse of the map above is given by the restriction of
\[
(\xi)_{h\in G}\rtimes_r u_g \mapsto \sum_{h\in G}h^{-1}\cdot (\pi(c)\xi_h)\otimes \rtimes_r u_{h^{-1}g},
\]
where $(\xi)_{h\in G}\in H\otimes\ell^2(G)$. Under the isomorphism in \eqref{eq:isoiso}, the representation $(\rho\otimes\pi\rtimes_r 1)(p_G\otimes -)$ is identified with the left action of $C^*_r(G)$ on $H\rtimes_r G$, and the compressed operator
\[
(\rho\otimes\pi\rtimes_r 1)(p_G\otimes 1)((g(T))_{g\in G}\rtimes_r 1)(\rho\otimes\pi\rtimes_r 1)(p_G\otimes 1)
\]
is identified with $T^\prime\rtimes_r 1$ on $H\rtimes_r G$, where we defined
\[
T^\prime=\sum_{g\in G} (g\cdot c)T(g\cdot c).
\]
Hence the claim follows by definition of property $(\gamma)$.

By Lemma \ref{lem:desfact}, we have
\[
\jmath^G_r(\tilde{x}) =  \Delta\otimes_{C_0(\E)\rtimes G}\mathrm{GJ}(\tilde x).
\]
%Applying Lemma \ref{lem:desfact}, using morphism notation, we see that
%\begin{equation*}
%\jmath^G_r(\tilde{x})=[\Delta^\prime] \otimes_{C_0(\E)\rtimes G \otimes C^*_r(G) \otimes C^*_r(G)} (\mathrm{GJ}(\tilde x)\otimes 1_{C^*_r(G)}) ,
%\end{equation*}
%where the dual coaction $\Delta^\prime$ is such that
%\[
%[\Delta^\prime]\in \KK[0]{}{ C_0(\E)\rtimes G\otimes C^*_r(G)}{C_0(\E)\rtimes G\otimes C^*_r(G)\otimes 
%C^*_r(G)}. 
%\]
%In other words, $[\Delta^\prime]=[\Delta]\hatotimes 1_{C^*_r(G)}$. Overall we have
Thus, we have
\begin{align*}
\jmath^G_r(x)&=[p_G]\,\hot_{C_0(\E)\rtimes G}\,\jmath^G_r(\tilde{x})\\
&=[p_G]\hatotimes_{C_0(\E)\rtimes G}  \Delta\otimes_{C_0(\E)\rtimes G}\mathrm{GJ}(\tilde x)\\
&=\delta\,\hot_{C_0(\E)\rtimes G}\, d.
\end{align*}

%\begin{align*}
%\jmath^G_r(x)&=[p]\,\hot_{C_0(\E)\rtimes G}\,\jmath^G_r(\tilde{x})\\
%&=[p]\hatotimes (([\Delta]\hatotimes 1_{C^*_r(G)}) \hatotimes 1_{C^*_r(G)} \hatotimes \mathrm{GJ}(\tilde{x}))\\
%&=( ([p]\hatotimes [\Delta])\hatotimes 1_{C^*_r(G)}) \hatotimes (1_{C^*_r(G)} \hatotimes \mathrm{GJ}(\tilde{x}))\\
%&=\delta\,\hot_{C_0(\E)\rtimes G}\, d.
%\end{align*}
\end{proof} 

\begin{proposition}\label{prop:czeroone}
We have the equality $\Lambda_{C_0(\E)\rtimes G}=1_{C_0(\E)\rtimes G}$.
\end{proposition}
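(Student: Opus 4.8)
The plan is to compute the Kasparov product $\delta\hot_{C^*_r(G)} d$ explicitly and to show that the resulting class equals $1_{C_0(\E)\rtimes G}$, following the same module-theoretic method used in the proof of Proposition \ref{prop:gammaG}, with the single difference that we now contract the group-algebra variable $C^*_r(G)$ instead of $C_0(\E)\rtimes G$. By the formula for $\Lambda_{C_0(\E)\rtimes G}$ in Lemma \ref{lem:spoin} and the fact that $\delta=[\Delta(p_G)]$ is a projection class, the product is represented on the Hilbert $C_0(\E)\rtimes G$-module obtained by compressing the external tensor product underlying $1_{C_0(\E)\rtimes G}\hatotimes d$ — which is built from $H\otimes\ell^2(G)$ as in \eqref{eq:gammamorphism} — by the projection furnished by $\sigma_*(\delta)$, equipped with the operator induced by $(g(T))_{g\in G}$.

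The first step is to identify this compressed module. Exactly as for \eqref{eq:isoiso}, I would use the cutoff function $c$ defining $p_G$ to write down an explicit isomorphism of Hilbert $C_0(\E)\rtimes G$-modules, of the same shape as the assignment $\xi\rtimes_r u_g\mapsto\sum_{h\in G}\pi(c)(h\cdot\xi)\otimes\delta_h\rtimes_r u_{hg}$. The difference is that here the right regular representation $\rho$ in \eqref{eq:gammamorphism} is the variable being paired against $C^*_r(G)$, so that the contraction collapses the $\ell^2(G)$ direction; the non-degeneracy of the representation $\pi$ of $C_0(\E)$, which is part of the definition of property $(\gamma)$, then guarantees that under this identification the left action is carried to the identity representation of $C_0(\E)\rtimes G$ coming from the covariant pair $(\pi,U)$.

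It remains to analyze the operator. Under the identification above, the compressed operator commutes with the action of $C_0(\E)$ up to the commutators $g\mapsto[g\cdot f,T]$, which by condition (2.1) of property $(\gamma)$ belong to $C_0(G,K(H))$. The point I would exploit is that $C_0(G,K(H))$ sits inside $K(H\otimes\ell^2(G))$ as the block-diagonal operators with compact entries vanishing at infinity, and that after the contraction over $C^*_r(G)$ this places the operator in the compact ideal of the resulting Hilbert $C_0(\E)\rtimes G$-module. Hence the cycle is a compact perturbation of a degenerate one and, together with the identification of the module in the previous step, represents $1_{C_0(\E)\rtimes G}$; an operator homotopy $t\mapsto tF$, $t\in[0,1]$, makes this explicit.

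I expect the main obstacle to be precisely this last step: upgrading condition (2.1) from the statement that the commutators of the operator with $C_0(\E)$ are compact to the statement that the operator itself becomes compact on the descended Hilbert module, and matching the compact ideal of that module with the $C_0(G,K(H))$-valued data. This is also where the choice of contraction matters — contracting $C^*_r(G)$ rather than $C_0(\E)\rtimes G$ is what removes the non-compact part of $(g(T))_{g\in G}$ that survived, in the computation of $\Lambda_{C^*_r(G)}$, as the non-trivial descended class $\jmath^G_r(x)$. Conceptually, the identity $\Lambda_{C_0(\E)\rtimes G}=1_{C_0(\E)\rtimes G}$ is the incarnation, on the topological side, of the split-injectivity of the assembly map that the $(\gamma)$-element always provides, so I would expect the computation to go through unconditionally.
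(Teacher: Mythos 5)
Your reduction to a compression computation is the right starting point, but the two key claims you make about the compressed cycle are false, and what the computation actually yields is not $1_{C_0(\E)\rtimes G}$. Carrying out the compression honestly (with the isometry analogous to \eqref{eq:isoiso}, namely $b\otimes\xi\mapsto\sum_{h\in G}c\,u_{h^{-1}}b\otimes\xi\otimes\delta_h$), the contraction over $C^*_r(G)$ collapses only the $\ell^2(G)$ leg: the resulting module is $(C_0(\E)\rtimes G)\otimes H$, not $C_0(\E)\rtimes G$; the left action of a generator $fu_s$ becomes $u_s\otimes\pi(f)U_s$; and the operator becomes $\sum_{h}h(c^2)\otimes h(T)$. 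This operator does not lie in the compact ideal $(C_0(\E)\rtimes G)\otimes K(H)$ of the module: condition (2.1) makes its \emph{commutators} with the $C_0(\E)$-action compact, not the operator itself, and indeed it cannot be compact, since the cycle condition forces $\phi(b)(1-F^2)$ to be compact while $\phi(b)=u_s\otimes\pi(f)U_s$ is not. (Separately, a compact perturbation of a degenerate cycle represents $0$, not $1$; what you would need is the identity correspondence $(B,\mathrm{id},0)$, which is exactly what you do not get.) What this direct computation proves is the Remark following Proposition \ref{prop:czeroone}: $\Lambda_{C_0(\E)\rtimes G}=\jmath^G_r(x\hot 1_{C_0(\E)})$, the descent of $\tau_{C_0(\E)}(x)$. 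Passing from that class to $1_{C_0(\E)\rtimes G}$ is the entire content of the proposition, and it is not formal.

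The structural reason no argument of your type can close this gap is that you never use condition (1) of property $(\gamma)$, whereas the statement genuinely depends on it. Take a cycle satisfying condition (2) but representing $0$ in $\KK{G}{\C}{\C}$, for instance $(H\oplus H^{\mathrm{op}}, T\oplus T)$ with the representation $\pi\oplus\pi$. Proposition \ref{prop:gammaG} gives $\Lambda_{C^*_r(G)}=\jmath^G_r(0)=0$ for this cycle, while your argument, which uses only condition (2), would still give $\Lambda_{C_0(\E)\rtimes G}=1$; by Lemma \ref{lem:spoin} the two identities together force $K^*(C_0(\E)\rtimes G)=0$, which is false in general. The paper's proof is where condition (1) enters, and it is categorical rather than analytic: by naturality of $\mu^G$ and $\nu^G$ one replaces the coefficient algebra $B=C_0(\E)\rtimes G$ by its Meyer--Nest approximation $P_B$, where $\mu^G_{P_B}$ is invertible; a generalization of Proposition \ref{prop:gammaG} identifies $\mu^G_{P_B}\circ\nu^G_{P_B}$ with the action of $x$ on $K_*(P_B\rtimes G)$; and condition (1), fed into \cite[Theorem 9.3]{nestmeyer:loc}, shows that action is the identity. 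Your closing intuition that the proposition encodes split-injectivity of the assembly map is correct, but split-injectivity is itself a theorem whose proof requires condition (1); it is not available ``unconditionally'' from the properness conditions alone.
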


In order to prove the proposition, a few preliminaries are in order. First we generalize the construction in \eqref{eq:gammamorphism} to include a coefficient algebra. This is easily done: simply replace $\ell^2(G)$ with the right Hilbert $A$-module $\ell^2(G,A)$ and define the right regular representation $\rho^G_A$ of $A\rtimes_r G$ (equipped with trivial $G$-action)
\[
a \mapsto (g(a))_{g\in G},\qquad  h \mapsto \rho_h : (a_g)_{g\in G} \mapsto (a_{gh})_{g\in G}
\] 
for $a \in A, h \in G$. Thus we get a class $\tilde{x}_A$ in $\KK[]{G}{A\rtimes_r G\otimes C_0(\E)}{A}$.
We define a group homomorphism
\begin{equation*}
\nu^G_A\colon \KK[]{}{\C}{ A\rtimes_r G} \to  \KK[]{G}{C_0(\E)}{A}
\end{equation*}
as the one induced by the class $\tilde{x}_A$ via the index pairing
\[
\KKK(\C,A \rtimes_r G) \times \KKK^G(A\rtimes_r G\otimes C_0(\E), A) \to \KKK^G(C_0(\E),A).
\]
This map is referred to as the $(\gamma)$-morphism in \cite{nish:dsplitb}. Note also that $\mathrm{GJ}\circ\nu^G_\C$ equals the map $d_j$ from Definition \ref{def:wpoin} (choosing $B=C_0(\E)\rtimes G$ as usual). The lemma below is about the naturality property of the assembly map and the $(\gamma)$-morphism.

\begin{lemma}
The following diagrams commute for any $f\in \KK[]{G}{A}{B}$.
\begin{align*}
&\xymatrix{\KK[]{G}{C_0(\E)}{A} \ar[r]^-{\mu^G_A} \ar[d]^-{-\hatotimes f } & \KK[]{}{\C}{A\rtimes_r G} \ar[d]^-{-\hatotimes \jmath^G_r (f)}\\
\KK[]{G}{C_0(\E)}{B}\ar[r]^-{\mu^G_A} & \KK[]{}{\C}{B\rtimes_r G},}
\\[6pt]
&\xymatrix{\KK[]{}{\C}{A\rtimes_r G} \ar[r]^-{\nu^G_A} \ar[d]^-{-\hatotimes \jmath^G_r (f) } &  \KK[]{G}{C_0(\E)}{A}  \ar[d]^-{-\hatotimes f}\\
\KK[]{}{\C}{B\rtimes_r G} \ar[r]^-{\nu^G_A} & \KK[]{G}{C_0(\E)}{B}
}
\end{align*}
\end{lemma}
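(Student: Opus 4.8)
The plan is to deduce both squares from the standard functoriality of Kasparov's reduced descent homomorphism $\jmath^G_r$ (multiplicativity under the Kasparov product, see \cite{kas:descent}) together with a single naturality identity for the family $\tilde{x}_A$. I use the Kasparov picture of the assembly map,
\[
\mu^G_A(y)=[p_G]\hot_{C_0(\E)\rtimes G}\jmath^G_r(y)\qquad(y\in\KKK^G(C_0(\E),A)),
\]
and the defining formula for the $(\gamma)$-morphism: viewing $z\in\KKK(\C,A\rtimes_r G)$ as a class in $\KKK^G(\C,A\rtimes_r G)$ with trivial $G$-action,
\[
\nu^G_A(z)=\bigl(z\hatotimes 1_{C_0(\E)}\bigr)\hot_{(A\rtimes_r G)\otimes C_0(\E)}\tilde{x}_A.
\]

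For the first square, let $f\in\KKK^G(A,B)$ and $y\in\KKK^G(C_0(\E),A)$. Multiplicativity of the descent gives $\jmath^G_r(y\hot_A f)=\jmath^G_r(y)\hot_{A\rtimes_r G}\jmath^G_r(f)$, so by associativity of the Kasparov product
\[
\mu^G_B(y\hot_A f)=[p_G]\hot_{C_0(\E)\rtimes G}\jmath^G_r(y)\hot_{A\rtimes_r G}\jmath^G_r(f)=\mu^G_A(y)\hot_{A\rtimes_r G}\jmath^G_r(f),
\]
which is exactly commutativity of the first diagram.

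For the second square I would first perform the formal reduction. Since $\tau_{C_0(\E)}(-)=-\hatotimes 1_{C_0(\E)}$ respects Kasparov products, for $z\in\KKK(\C,A\rtimes_r G)$ one has
\[
\bigl(z\hot_{A\rtimes_r G}\jmath^G_r(f)\bigr)\hatotimes 1_{C_0(\E)}=\bigl(z\hatotimes 1_{C_0(\E)}\bigr)\hot_{(A\rtimes_r G)\otimes C_0(\E)}\tau_{C_0(\E)}\bigl(\jmath^G_r(f)\bigr).
\]
Substituting this into the definitions of $\nu^G_B$ and $\nu^G_A$ and using associativity, commutativity of the second diagram is reduced to the single identity
\[
\tau_{C_0(\E)}\bigl(\jmath^G_r(f)\bigr)\hot_{(B\rtimes_r G)\otimes C_0(\E)}\tilde{x}_B=\tilde{x}_A\hot_A f
\]
in $\KKK^G((A\rtimes_r G)\otimes C_0(\E),B)$. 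This expresses that $A\mapsto\tilde{x}_A$ is a natural transformation from the functor $A\mapsto(A\rtimes_r G)\otimes C_0(\E)$ to the functor $A\mapsto A$.

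Establishing this naturality identity is the main obstacle, and I would verify it directly at the level of cycles. Choosing a cycle $(E,\psi,S)$ representing $f$, both Kasparov products live on the internal tensor product, and their underlying Hilbert $B$-modules are identified through the canonical isomorphism $\ell^2(G,A)\otimes_A E\cong\ell^2(G,E)$; this isomorphism intertwines the representation of $(A\rtimes_r G)\otimes C_0(\E)$ assembled from $\rho^G_A$, $\psi$ and $\pi$ with the one built from $\jmath^G_r(f)$ and $\rho^G_B$. The crucial simplification is that the operator $(g(T))_{g\in G}$ produced by the $\tilde{x}$-construction acts only on the $H\otimes\ell^2(G)$ tensor legs; it is therefore $A$-linear and descends through the internal tensor product, so that it and the operator $S$ of $f$ act on disjoint legs of $H\otimes\ell^2(G)\otimes E$. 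Consequently the Connes--Skandalis product operator exists and is essentially canonical, and the two products are represented by the same cycle. The only genuine analytic checks---that the assembled operator satisfies the compactness conditions of a Kasparov module---reduce, exactly as in the proof of Proposition \ref{prop:gammaG}, to property $(\gamma)$ of $(H,T)$. Alternatively, one may first verify the identity for $f$ a $G$-equivariant $*$-homomorphism, where the module isomorphism is immediate, and then extend to arbitrary $f$ by additivity and compatibility with the Kasparov product.
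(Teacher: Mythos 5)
Your handling of the first square coincides with the paper's: multiplicativity of $\jmath^G_r$ under Kasparov products plus associativity. For the second square you take a genuinely different route. The paper invokes Meyer's structural theorem \cite{meyer:genhom} that every morphism in $\KKK^G(A,B)$ is a composition of $*$-homomorphisms and their $\KKK$-inverses; since both ways around the square are compatible with composition (and the square for a $\KKK$-invertible $*$-homomorphism implies the square for its inverse), this reduces everything to the case where $f$ is an equivariant $*$-homomorphism, for which the check is a short explicit module computation that the paper omits. You instead reduce commutativity, cleanly and correctly, to the single naturality identity $\tau_{C_0(\E)}\bigl(\jmath^G_r(f)\bigr)\hot_{(B\rtimes_r G)\otimes C_0(\E)}\tilde{x}_B=\tilde{x}_A\hot_A f$ in $\KKK^G\bigl((A\rtimes_r G)\otimes C_0(\E),B\bigr)$, and verify it at cycle level for an \emph{arbitrary} class $f=[E,\psi,S]$, using $\ell^2(G,A)\otimes_A E\cong\ell^2(G,E)$ and the observation that $(g(T))_{g\in G}$ acts only on the $H\otimes\ell^2(G)$ legs, hence descends. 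This is sound and self-contained, and the disjoint-legs observation is the right reason a common Connes--Skandalis representative exists for both decompositions of the product; but that is also where your write-up is thinnest: ``essentially canonical'' hides the actual verification of the connection and positivity conditions, which is the real analytic content. In short, the paper trades that analytic work for a nontrivial categorical input (Meyer's factorization), while your argument avoids the external citation at the cost of Kasparov-product technology. One caution: your closing ``alternative'' --- verify for $*$-homomorphisms and extend ``by additivity and compatibility with the Kasparov product'' --- is not justified as stated, since arbitrary classes in $\KKK^G(A,B)$ are not generated from $*$-homomorphisms by sums and products in any obvious way; the extension is legitimate precisely because of Meyer's factorization theorem, which is what the paper cites.
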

\begin{proof}
The first diagram commutes by functoriality of descent and associativity of the Kasparov product. By results from \cite{meyer:genhom} any morphism $f$ in $\KKK^G(A, B)$ can be written as a composition of $*$-homomorphisms and their inverses in $\KKK$. This means it suffices to check the commutativity of the second diagram with respect to $*$-homomorphisms $f\colon A\to B$. We omit this simple verification.
\end{proof}

\begin{proof}[Proof of Proposition \ref{prop:czeroone}] Let $B=C_0(\E)\rtimes G$ and regard it as a $G$-$C^\ast$-algebra with the trivial $G$-action. We have the following diagram
\begin{align*}
\xymatrix{
\KKK^G(C_0(\E), B) \ar[d]_-{\mathrm{GJ}}^{\cong}  \ar[r]^-{\mu^G_B} & \KKK(\mathbb{C}, C^\ast_r(G)\otimes B) \ar[d]^-{=} \ar[r]^-{\nu^G_B} &\KKK^G(C_0(\E), B) \ar[d]_-{\mathrm{GJ}}^{\cong}     \\
\KKK(B, B)   \ar[r]^-{\delta\,\hot_B\,-}    & \KKK(\mathbb{C}, C^\ast_r(G)\otimes B)   \ar[r]^-{-\,\hot_{C^\ast_r(G)}\, d}   &\KKK(B, B)  
}
\end{align*} 
If we prove the composition on the top is the identity, then it follows $\Lambda_{C_0(\E)\rtimes G}=1_{C_0(\E)\rtimes G}$. Let $D_B\colon P_B\to B$ be a weak equivalence as in \cite{nestmeyer:loc}. Because the following diagram commutes,
\begin{align*}
\xymatrix{
\KKK^G(C_0(\E), P_B) \ar[d]^-{D_{B*}}  \ar[r]^-{\mu^G_{P_B}} & \KKK(\mathbb{C}, P_B\rtimes G) \ar[d]^-{\jmath^G_r(D_{B*})} \ar[r]^-{\nu^G_{P_B}} &\KKK^G(C_0(\E), P_B) \ar[d]^-{D_{B*}}     \\
\KKK^G(C_0(\E), B) \ar[r]^-{\mu^G_B} & \KKK(\mathbb{C}, B\rtimes_r G)  \ar[r]^-{\nu^G_B} &\KKK^G(C_0(\E), B),
}
\end{align*} 
it suffices to show that $\nu^G_{P_B}$ is a left inverse of the assembly map $\mu^G_{P_B}$. Now, $\mu^G_{P_B}$ is invertible, hence it suffices to show that $\nu^G_{P_B}$ yields a right inverse. A minor generalization of the proof of Proposition \ref{prop:gammaG} shows that $\mu^G_{P_B}\circ \nu^G_{P_B}$ coincides with the induced action of $x\in\KKK^G(\mathbb{C}, \mathbb{C})$ on $K_*(P_B\rtimes G)$. Recall that $x$ equals the identity when restricted to each finite subgroup $H\subseteq G$, and $P_B\rtimes G$ belongs to the localizing subcategory of $\KKK$ generated by the $B\rtimes H$'s. Therefore the map $x\hatotimes - \colon K_*(P_B\rtimes G)\to  K_*(P_B\rtimes G)$ is the identity by \cite[Theorem 9.3]{nestmeyer:loc}.
\end{proof}

\begin{remark}
In parallel with Proposition \ref{prop:gammaG}, one can prove that 
\begin{equation*}
\Lambda_{C_0(\E)\rtimes G}=\jmath^G_r (x\hatotimes 1_{C_0(\E)}).
\end{equation*}
Again, we set $B=C_0(\E)\rtimes G$ and first notice that $ \nu^G_B\circ \mu^G_B=x\,\hot_{C_0(\E)}\,-$. It is enough to show this when $B$ is replaced by $P_B$, in which case we can invert the assembly map and write
\begin{align*}
(\jmath^G_r(x)\hatotimes -) &=(\jmath^G_r(x)\hatotimes -)\circ \mu^G_B \circ (\mu^G_B)^{-1}\\
\mu^G_B \circ \nu^G_B &=\mu^G_B\circ (x\,\hot_{C_0(\E)}\,-)\circ (\mu^G_B)^{-1}\\
\nu^G_B\circ \mu^G_B&=x\,\hot_{C_0(\E)}\,-.
\end{align*}
To complete the proof one must show that
\[
\mathrm{GJ}(x\,\hot_\C\, \mathrm{GJ}^{-1}(1_B))=\jmath^G_r(x\hatotimes 1_{C_0(\E)})\,\hot_B\, 1_B,
\]
but this follows from Lemma \ref{lem:gjj}.
\end{remark}

We now come to the main result of this subsection.

\begin{theorem}
Suppose there is a $(\gamma)$-element $x\in \KK{G}{\C}{\C}$ for $G$. If $\jmath^G_r(x)=1 \in \KK{G}{C^\ast_r(G)}{C^\ast_r(G)}$, then $G$ has Spanier--Whitehead duality.
\end{theorem}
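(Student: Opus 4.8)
The plan is to read off the duality directly from the two preceding propositions, using the formal machinery of Lemma \ref{lem:spoin} together with the criterion of Definition \ref{def:swd}. The candidate unit is the canonical class $\delta\in\KKK(\C, C^*_r(G)\otimes C_0(\E)\rtimes G)$ of Definition \ref{def:cunit}, and the candidate counit is $d=\mathrm{GJ}(\tilde x)\in\KKK(C^*_r(G)\otimes C_0(\E)\rtimes G,\C)$. Both sit in degree zero, so in the notation of Definition \ref{def:wpoin} we are in the case $i=0$ with $A=C^*_r(G)$ and $B=C_0(\E)\rtimes G$. The entire argument then reduces to verifying the two conditions $\Lambda_A=1_A$ and $\Lambda_B=(-1)^i 1_B$, which for $i=0$ are simply $\Lambda_{C^*_r(G)}=1_{C^*_r(G)}$ and $\Lambda_{C_0(\E)\rtimes G}=1_{C_0(\E)\rtimes G}$.

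First I would invoke Proposition \ref{prop:gammaG}, which gives $\Lambda_{C^*_r(G)}=\jmath^G_r(x)$; feeding in the hypothesis $\jmath^G_r(x)=1_{C^*_r(G)}$ yields $\Lambda_{C^*_r(G)}=1_{C^*_r(G)}$. Proposition \ref{prop:czeroone} supplies the other equality $\Lambda_{C_0(\E)\rtimes G}=1_{C_0(\E)\rtimes G}$ unconditionally. To pass from these to \emph{weak} Spanier--Whitehead duality, I would apply Lemma \ref{lem:spoin}: for $i=0$ its identities say that the composites $\delta_j\circ d_j$ and $d_j\circ\delta_j$ are given by Kasparov product with $\Lambda_{C^*_r(G)}$ and $\Lambda_{C_0(\E)\rtimes G}$ respectively. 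Substituting the two identity classes just obtained shows both composites are the identity, whence $d_j$ and $\delta_j$ are mutually inverse isomorphisms and $C_0(\E)\rtimes G$ is a weak dual of $C^*_r(G)$. The two $\Lambda$-equalities are then exactly the hypotheses of Definition \ref{def:swd}, so $G$ has Spanier--Whitehead duality.

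Since the substance is entirely carried by Propositions \ref{prop:gammaG} and \ref{prop:czeroone}, which I may assume, the only delicate point at this stage is bookkeeping. I would check that the canonical unit indeed lives in degree $i=0$, so that the sign $(-1)^i$ in Definition \ref{def:swd} disappears, and that Lemma \ref{lem:spoin} is applied correctly: its identities are formal consequences of associativity of the Kasparov product and of the definitions of $d_j,\delta_j,\Lambda_A,\Lambda_B$, and in particular do \emph{not} presuppose that $d_j$ and $\delta_j$ are already isomorphisms. It is precisely this formal character that lets me bootstrap from $\Lambda=1$ on both sides to the mutual invertibility defining weak duality, rather than having to establish weak duality by an independent argument.
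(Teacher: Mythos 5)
Your proposal is correct and matches the paper exactly: the paper states this theorem without further argument precisely because it follows from Proposition \ref{prop:gammaG} (giving $\Lambda_{C^*_r(G)}=\jmath^G_r(x)=1_{C^*_r(G)}$ under the hypothesis) and Proposition \ref{prop:czeroone} (giving $\Lambda_{C_0(\E)\rtimes G}=1_{C_0(\E)\rtimes G}$), combined with the formal identities of Lemma \ref{lem:spoin} in degree $i=0$ and Definition \ref{def:swd}. Your observation that Lemma \ref{lem:spoin} is purely formal and does not presuppose invertibility of $d_j$ and $\delta_j$ is the right reading, and it is exactly how the paper uses it.
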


%Let $d$ be the class in $\KKK(C_0(EG)\rtimes_rG\otimes C^\ast_r(G), \mathbb{C})$ as before. The $C^\ast$-algebras $C_0(\E)\rtimes G$ and $C(BG)$ are Morita-equivalent. We set
%\[
%\underline d \in \KKK(C(BG)\otimes C^\ast_r(G), \mathbb{C})
%\]
%to be the element which corresponds to $d$. We set 
%\[
%\underline\Lambda_{C^\ast_r(G)}=[\mathrm{MF}]\hot_{C(BG)} \underline d, \underline\Lambda_{C(BG)}=[\mathrm{MF}]\hot_{C^\ast_r(G)} \underline d.
%\]
%The following is immediate from the previous arguments.
%\begin{theorem} We have $\underline\Lambda_{C^\ast_r(G)}=\jmath^G_r(x)$ and $\underline\Lambda_{C(BG)}=1_{C(BG)}$.
%\end{theorem}

\subsection{Argument based on the \texorpdfstring{$\gamma$}{gamma}-element}

Suppose there is a gamma element $\gamma$ as in Definition \ref{def:gamma}. Following \cite[Chapter 15]{higguetrout:eqeth}, we define a map $s_A$ for any proper algebra $A$. This is the $G$-equivariant $\ast$-homomorphism 
\[
s_A\colon A\rtimes_rG\otimes C_0(\E) \to K(A\otimes \ell^2(G))
\]
where $A\rtimes_rG$ is equipped with the trivial $G$-action, defined as the tensor product of the following representation of $C_0(\E)$ on $A\otimes \ell^2(G)$:
\[
C_0(\E) \ni\phi \mapsto (\phi)_{g\in G} \in L(A\otimes \ell^2(G))
\]
and the right regular representation of $A\rtimes_rG$ on $A\otimes \ell^2(G)$:
\[
A \ni a \mapsto (g(a))_{g\in G} \in L(A\otimes \ell^2(G)), \,\,\, G \ni g \mapsto 1\otimes\rho_g
\]
where $\rho_g$ is the right translation by $g$. Here, the $G$-action on the Hilbert module $A\otimes \ell^2(G)$ is given by the tensor product of the action on $A$ and the left-regular representation. The $\ast$-homomorphism $s_A$ defines an element
\[
s_A \in \KKK^G(A\rtimes_rG\otimes C_0(\E), A).
\]
\begin{proposition}[{cf. \cite[Chapter 15]{higguetrout:eqeth}}] \label{prop_proper_inverse} 
For any proper $G$-$\Calg$ $A$, the $\ast$-homomorphism $s_A$ defines the inverse
\[
s_A\colon \KKK(\mathbb{C}, A\rtimes_rG) \to \KKK^G(C_0(\E), A)
\] 
of the assembly map 
\[
\mu^G_A\colon \KKK^G(C_0(\E), A) \to \KKK(\mathbb{C}, A\rtimes_rG).
\]
\end{proposition}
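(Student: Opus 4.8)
The plan is to prove that $s_A$ and $\mu^G_A$ are mutually inverse by showing that each composite is the relevant identity. First I would record why the stated map is well defined: the diagonal $G$-action on the right Hilbert $A$-module $A\otimes\ell^2(G)$ turns it into a $G$-equivariant imprimitivity bimodule between $A$ and $K(A\otimes\ell^2(G))$, so the $\ast$-homomorphism $s_A$ determines a class $[s_A]\in\KKK^G(A\rtimes_rG\otimes C_0(\E),A)$, and the index pairing $y\mapsto(y\hot 1_{C_0(\E)})\hot_{A\rtimes_rG\otimes C_0(\E)}[s_A]$ produces a class in $\KKK^G(C_0(\E),A)$, as claimed.

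The heart of the matter is the computation of $\mu^G_A\circ s_A$ as an endomorphism of $K_\ast(A\rtimes_rG)$. Writing the assembly map in the Mishchenko form $\mu^G_A(z)=[p_G]\hot_{C_0(\E)\rtimes_rG}\jmath^G_r(z)$ and using functoriality of the reduced descent together with the factorization in Lemma \ref{lem:desfact} and associativity of the Kasparov product, the composite $\mu^G_A\circ s_A$ becomes Kasparov multiplication by a fixed class $\Phi_A\in\KKK(A\rtimes_rG,A\rtimes_rG)$, so the task reduces to identifying $\Phi_A$ with $1_{A\rtimes_rG}$. For this I would repeat, now with coefficients in $A$, the module isomorphism \eqref{eq:isoiso} from the proof of Proposition \ref{prop:gammaG}: the cutoff identity $\sum_{g\in G}g(c)^2=1$ makes the compression of the descended module $K(A\otimes\ell^2(G))\rtimes_rG$ by the projection attached to $p_G$ isomorphic to the standard right Hilbert $A\rtimes_rG$-module with its defining left action. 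The essential simplification relative to Proposition \ref{prop:gammaG} is that $s_A$ is an honest $\ast$-homomorphism and carries no Fredholm operator; there is no analogue of the operator $T'$ to control up to compact perturbation, so the compressed cycle is the identity $\ast$-homomorphism and $\Phi_A=1_{A\rtimes_rG}$ on the nose.

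To conclude that $s_A$ is a two-sided inverse I would invoke the fact---which is precisely the content of \cite[Chapter 15]{higguetrout:eqeth}, and which is built into the Meyer--Nest framework \cite{nestmeyer:loc} where proper algebras are the objects on which the assembly map is an isomorphism---that $\mu^G_A$ is an isomorphism for every proper $G$-$C^\ast$-algebra $A$. A right inverse of an isomorphism is its inverse, so the identity $\mu^G_A\circ s_A=\mathrm{id}$ upgrades $s_A$ to $(\mu^G_A)^{-1}$. Alternatively, one may check the companion identity $s_A\circ\mu^G_A=\mathrm{id}_{\KKK^G(C_0(\E),A)}$ directly; by naturality of both maps in $A$ and the same partition-of-unity property of the cutoff on the $G$-compact space $\E$, this rests on the same underlying computation.

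The step I expect to be the main obstacle is the coefficient version of \eqref{eq:isoiso} used in the second paragraph: matching the compression of $K(A\otimes\ell^2(G))\rtimes_rG$ by $p_G$ with the standard module over $A\rtimes_rG$ requires careful bookkeeping of the diagonal $G$-action on $A\otimes\ell^2(G)$, of the $G$-equivariant Morita equivalence between $A$ and $K(A\otimes\ell^2(G))$, and of the passage to reduced crossed products. Once this identification is secured, the remaining manipulations of descent and Kasparov products are purely formal.
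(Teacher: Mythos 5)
Your proposal is correct and follows essentially the same route as the paper: both reduce, via the fact that assembly is an isomorphism for proper coefficients, to showing $\mu^G_A\circ s_A=\mathrm{id}$, and both establish this through the cutoff-projection compression isomorphism $A\otimes H\rtimes_rG\cong p_c\bigl(A\otimes H\otimes\ell^2(G)\rtimes_rG\bigr)$. The only cosmetic difference is that the paper verifies the identity cycle-by-cycle (using Kasparov stabilization to write an arbitrary cycle as $(A\otimes H\rtimes_rG,F)$ and tracking $F$ through the compression), whereas you first factor the composite through a single class $\Phi_A=[p_G]\,\hot_{C_0(\E)\rtimes G}\,\jmath^G_r([s_A])$, which lets you avoid carrying the operator.
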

\begin{proof} 
The assembly map is an isomorphism for any proper algebra. Hence, we just show that the composition $\mu^G_A\circ s_A$ is the identity. Take a Kasparov cycle $(E, F)$ for $\KKK(\C, A\rtimes_rG)$ where $E$ is a graded $A\rtimes_rG$-module and $F$ is an odd, self-adjoint operator on $E$ satisfying $1-F^2 \equiv 0$ modulo compact operators. 

By Kasparov's stabilization theorem, we can assume $E$ is $A\otimes H\rtimes_rG$ for some graded Hilbert space $H$ with the trivial $G$-action. The map $s_A$ sends this cycle $(A\otimes H\rtimes_rG, F)$ to the $G$-equivariant cycle $(A\otimes H \otimes \ell^2(G), \pi, \rho(F))$ for $\KKK^G(C_0(\E), A)$ where $\pi$ is a representation of $C_0(\E)$ on $A\otimes H \otimes \ell^2(G)$ defined as: for $\phi$ in $C_0(\E)$
\[
\pi(\phi)(a_g\otimes v_g \otimes \delta_g) =\phi a_g\otimes v_g \otimes \delta_g
\]
and $\rho(F)$ is an operator in $L(A\otimes H \otimes \ell^2(G))$ determined by the map
\[
L(A\otimes H\rtimes_rG)=M(A\otimes K(H)\rtimes_rG) \xrightarrow{\rho} M(A\otimes K(H\otimes \ell^2(G))) = L(A\otimes H \otimes \ell^2(G)),
\]
which is a natural extension of the right regular representation $\rho^G_A$ of $A\rtimes_rG$ on $A\otimes \ell^2(G)$ described before. Hence, the composition $\mu^G_A\circ s_A$ sends the cycle $(A\otimes H\rtimes_rG, F)$ to the cycle $(p_c(A\otimes H \otimes \ell^2(G)\rtimes_rG), p_c\rho(F)\rtimes_r1p_c)$ where we simply denote by $p_c$ the image of a cutoff projection $p_c$ in $C_0(\E)\rtimes_ G$ by the representation $\pi\rtimes_r1$. 

On the other hand, there is an isomorphism of right Hilbert $A\rtimes_r G$-modules
\[
A\otimes H \rtimes_r G \longrightarrow  p_c (A\otimes H \otimes \ell^2(G)\rtimes_r G)
\]
given by for $\xi$ in $A\otimes H$,
\[
\xi\rtimes_r u_g  \mapsto  \sum_{h\in G} c(h(\xi))\otimes\delta_h\rtimes_r u_{hg}.
\]
The inverse map is given by for $(\xi_h)_{h\in G}$ in $A\otimes H\otimes \ell^2(G)$,
\[
(\xi_h)_{h\in G}\rtimes_r u_g \mapsto \sum_{h\in G} h^{-1}(c \xi_h)\rtimes_r u_{h^{-1}g}.
\]
Under this isomorphism, the restriction $p_c\rho(F)\rtimes_r1p_c$ of $\rho(F)\rtimes_r1$ on the subspace $p_c (A\otimes H \otimes \ell^2(G)\rtimes_r G)$ of $A\otimes H \otimes \ell^2(G)\rtimes_r G$ corresponds to the operator $F$ on $A\otimes H \rtimes_r G$. In summary, the composition $\mu^G_A\circ s_A$ sends the cycle $(A\otimes H\rtimes_rG, F)$ to itself up to the isomorphism described above.

\end{proof}
For any separable $G$-$\Calg$ $B$, we have the following commutative diagram 
\begin{align*}
\xymatrix{
\mu^G_B\colon  \KKK^G(C_0(EG), B) \ar[d]_-{}^{-\,\hot_{\mathbb{C}}\beta}   \ar[r]& \KKK(\mathbb{C}, B\rtimes_rG) \ar[d]^-{-\,\hot_{B\rtimes_rG}\jmath^G_r(\mathrm{id}_B\,\hot\, \beta)}   \\
\mu^G_{B\otimes P} \colon   \KKK^G(C_0(EG), B\otimes P) \ar[d]_-{}^{-\,\hot_{P}\alpha }  \ar[r]^-{\cong}& \KKK(\mathbb{C}, (B\otimes P)\rtimes_rG) \ar[d]^-{-\,\hot_{(B\otimes P)\rtimes_rG}\jmath^G_r(\mathrm{id}_B\,\hot\, \alpha)}   \\
\mu^G_B\colon    \KKK^G(C_0(EG), B)   \ar[r]& \KKK(\mathbb{C}, B\rtimes_rG)
}
\end{align*} 
where the vertical composition on the left is the identity. With this and by Proposition \ref{prop_proper_inverse}, we see that the element
\[
  (\jmath^G_r(1_B\,\hot\, \beta))\,\hot_{(B\hatotimes P)\rtimes_rG}\, s_{B\otimes P}\,\hot_{P}\, \alpha \in \KKK^G((B\rtimes_rG)\otimes C_0(\E), B)
 \]
 induces the left-inverse of the assembly map $\mu^G_B$ via Kasparov product. We remark that this is the standard technique for proving the split injectivity of the assembly map in the presence of a $\gamma$-element. 

Now, we set $d'$ to be the element in $\KKK( C^\ast_r(G)\otimes C_0(\E)\rtimes G, \mathbb{C})$ which corresponds to 
\[
 d''=(\jmath^G_r(\beta))\,\hot_{P\rtimes_rG} s_P\,\hot_{P} \alpha \in \KKK^G(C^\ast_r(G)\otimes C_0(\E), \mathbb{C}).
\]
Let
\[
\delta=\delta_G \in \KKK(\mathbb{C}, C_0(\E)\rtimes G\otimes C^\ast_r(G))
\]
as before. We set $\Lambda'_{C^\ast_r(G)}=\delta\,\hot_{C_0(\E)\rtimes G}\, d', \Lambda'_{C_0(\E)\rtimes G}=\delta\,\hot_{C^\ast_r(G)} \,d'$. 

\begin{proposition}\label{prop:main}
We have 
\[
\Lambda'_{C^\ast_r(G)}  = \jmath^G_r(\gamma) \in \KKK(C^\ast_r(G), C^\ast_r(G)).
\]
and
\[
  \Lambda'_{C_0(\E)\rtimes G}  = 1_{C_0(\E)\rtimes G} \in \KKK(C_0(\E)\rtimes G, C_0(\E)\rtimes G)
\]
\end{proposition}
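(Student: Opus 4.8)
The plan is to establish the two identities in parallel with Propositions~\ref{prop:gammaG} and~\ref{prop:czeroone}, using the composite $d''=\jmath^G_r(\beta)\hot_{P\rtimes_r G}s_P\hot_P\alpha$ in the role played before by the cycle with property $(\gamma)$, and invoking Proposition~\ref{prop_proper_inverse} wherever that defining property was used. Throughout, the reduced form of the descent factorization (Lemma~\ref{lem:desfact}) is available because the $G$-action on $C_0(\E)$ is proper, so that full and reduced crossed products agree, cf.\ Remark~\ref{r:cross}.

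For the identity $\Lambda'_{C^*_r(G)}=\jmath^G_r(\gamma)$ I would first repeat the formal step of Proposition~\ref{prop:gammaG}. Writing $\delta=[p_G]\hot_{C_0(\E)\rtimes G}[\Delta]$ and applying Lemma~\ref{lem:desfact} one obtains $\jmath^G_r(d'')=[\Delta]\hot_{C_0(\E)\rtimes G}d'$, whence by associativity of the Kasparov product $\Lambda'_{C^*_r(G)}=\delta\hot_{C_0(\E)\rtimes G}d'=[p_G]\hot_{C_0(\E)\rtimes G}\jmath^G_r(d'')$. This reduces the claim to a compression identity, which I would prove by expanding $\jmath^G_r(d'')$ via multiplicativity of reduced descent. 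The factor coming from $\jmath^G_r(\beta)$ descends to $\tau_{C_0(\E)\rtimes G}(\jmath^G_r(\beta))$ (here the group is absorbed by the proper tensor factor $C_0(\E)$, so no spurious copy of $C^*_r(G)$ is produced), and it acts as the identity on the $C_0(\E)\rtimes G$ slot along which we compress; likewise $\jmath^G_r(\alpha)$ does not meet that slot. Hence $p_G$ passes through, giving
\[
[p_G]\hot_{C_0(\E)\rtimes G}\jmath^G_r(d'')=\jmath^G_r(\beta)\hot_{P\rtimes_r G}\bigl([p_G]\hot_{C_0(\E)\rtimes G}\jmath^G_r(s_P)\bigr)\hot_{P\rtimes_r G}\jmath^G_r(\alpha).
\]
The crux is the single identity $[p_G]\hot_{C_0(\E)\rtimes G}\jmath^G_r(s_P)=1_{P\rtimes_r G}$, which is precisely the $\KKK$-level content of Proposition~\ref{prop_proper_inverse}: the explicit isomorphism $P\otimes H\rtimes_r G\cong p_c(P\otimes H\otimes\ell^2(G)\rtimes_r G)$ used in its proof identifies the compression of the descended $s_P$ by the cutoff projection with the identity cycle. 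Granting it, multiplicativity of descent yields $\jmath^G_r(\beta)\hot_{P\rtimes_r G}\jmath^G_r(\alpha)=\jmath^G_r(\beta\hot_P\alpha)=\jmath^G_r(\gamma)$.

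For the identity $\Lambda'_{C_0(\E)\rtimes G}=1_{C_0(\E)\rtimes G}$ I would set $B=C_0(\E)\rtimes G$ with trivial action and reproduce the square diagram of Proposition~\ref{prop:czeroone}. Its left square commutes because $\mu^G_B=(\delta\hot_B-)\circ\mathrm{GJ}$, and its right square records that, under Green--Julg, $-\hot_{C^*_r(G)}d'$ agrees with the $s$-based morphism $\nu'$ attached to the $B$-coefficient version of $d''$; this is the $\gamma$-analogue of the identity $\mathrm{GJ}\circ\nu^G_\C=d_j$ from Definition~\ref{def:wpoin} and needs only the routine coefficient-naturality already used in the $(\gamma)$ argument. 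The top composite is then $\nu'\circ\mu^G_B$, which the diagram preceding this proposition shows to be the identity—that is, $d''$ induces a left inverse of $\mu^G_B$, a fact resting on the invertibility of $\mu^G_{B\otimes P}$ for the proper algebra $B\otimes P$ (Proposition~\ref{prop_proper_inverse}) and on $\gamma$ acting as the identity on $\KKK^G(C_0(\E),B)$. Since $\mathrm{GJ}$ is an isomorphism, commutativity forces $\Lambda'_{C_0(\E)\rtimes G}=1_{C_0(\E)\rtimes G}$; note that, unlike the $(\gamma)$-case, the presence of the genuine proper algebra $P$ removes any need to pass to the simplicial approximation $P_B$.

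The step I expect to be the main obstacle is the bookkeeping in the compression identity for $\Lambda'_{C^*_r(G)}$: carrying the reduced descent through all three factors of $d''$ while keeping straight the two distinct $G$-actions borne by $P\rtimes_r G$ (the trivial one used to build $s_P$ versus the proper one on $P$ descended in $\jmath^G_r(\alpha)$ and $\jmath^G_r(\beta)$), and verifying at the level of Hilbert modules and operators that compressing $\jmath^G_r(s_P)$ by $p_G$ returns exactly $1_{P\rtimes_r G}$ rather than merely an idempotent that acts as the identity on $K$-theory.
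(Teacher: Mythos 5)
Your proposal is correct, but for the first identity it takes a genuinely different route from the paper. The paper proves $\Lambda'_{C^*_r(G)}=\jmath^G_r(\gamma)$ by a direct cycle computation: it fixes representatives $(P,b)$ for $\beta$ and $(H,F)$ for $\alpha$, writes $d''$ as a \emph{single} cycle $(H\otimes\ell^2(G),\rho\otimes\pi, N(g(b))_{g\in G}+M(g(F))_{g\in G})$ with $M,N$ supplied by Kasparov's Technical Theorem, and then identifies the compression $\delta\hot_{C_0(\E)\rtimes G}d'$ with $\jmath^G_r\bigl((H,T_0)\bigr)$, where $T_0=N_0b+M_0F_0$ is built from cutoff averages and $[H,T_0]$ is recognized as a Kasparov product of $\beta$ and $\alpha$, i.e.\ as $\gamma$. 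You instead keep the three factors of $d''$ separate, use multiplicativity of reduced descent and commutation of external products to slide the $p_G$-compression past the $\beta$- and $\alpha$-factors onto $\jmath^G_r(s_P)$ alone, and reduce everything to the single identity $[p_G]\hot_{C_0(\E)\rtimes G}\jmath^G_r(s_P)=1_{P\rtimes_r G}$, after which $\jmath^G_r(\beta)\hot_{P\rtimes_rG}\jmath^G_r(\alpha)=\jmath^G_r(\gamma)$ by functoriality. Two remarks on what this trade-off buys and costs. Your route avoids the Technical Theorem and the identification of the averaged operator $T_0$ with a product representative, which is the least formal part of the paper's argument; it is more modular. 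On the other hand, your crux identity is strictly stronger than Proposition~\ref{prop_proper_inverse} \emph{as stated}, which is only an assertion about the induced maps between $\KKK(\C,A\rtimes_rG)$ and $\KKK^G(C_0(\E),A)$; you need the $\KKK(P\rtimes_rG,P\rtimes_rG)$-level statement, which requires checking that the explicit module isomorphism in that proof also intertwines the left $P\rtimes_rG$-actions (the compressed descended right-regular representation versus left multiplication). You flag exactly this, and the verification is the same one the paper carries out around the isomorphism \eqref{eq:isoiso} in Proposition~\ref{prop:gammaG}, so the gap is real but routine to fill. For the second identity your argument coincides with the paper's: the same Green--Julg conjugation diagram, with the left inverse of $\mu^G_B$ furnished by the diagram preceding the proposition (properness of $B\otimes P$ plus $\gamma$ acting as the identity on $\KKK^G(C_0(\E),B)$), applied to $1_B$.
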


Before giving a proof of Proposition \ref{prop:main}, let us obtain our main results as its direct consequences:

\begin{theorem}\label{thm:swdm}
If $\jmath^G_r(\gamma)=1_{C^\ast_r(G)}$, then $G$ has Spanier--Whitehead duality. 
\end{theorem}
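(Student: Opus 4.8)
The plan is to derive this statement as an immediate formal consequence of Proposition \ref{prop:main}, feeding its conclusions into the general duality framework encoded in Lemma \ref{lem:spoin} and Definition \ref{def:swd}. The candidate duality pair is already fixed: the canonical unit $\delta$ of Definition \ref{def:cunit}, together with the counit $d'$ assembled from the Dirac--dual-Dirac data $(\alpha,\beta,P)$ and the proper-algebra inverse $s_P$ of Proposition \ref{prop_proper_inverse}. Since $\delta$ and $d'$ both lie in degree-zero $\KKK$-groups, the relevant duality index is $i=0$ throughout.

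First I would invoke Proposition \ref{prop:main}. It yields, \emph{unconditionally}, the identity $\Lambda'_{C_0(\E)\rtimes G}=1_{C_0(\E)\rtimes G}$, and it identifies $\Lambda'_{C^*_r(G)}=\jmath^G_r(\gamma)$. The hypothesis $\jmath^G_r(\gamma)=1_{C^*_r(G)}$ then forces $\Lambda'_{C^*_r(G)}=1_{C^*_r(G)}$ as well, so that both $\Lambda$-classes coincide with the respective ring units.

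Next I would check weak Spanier--Whitehead duality in the sense of Definition \ref{def:wpoin}. Specializing Lemma \ref{lem:spoin} to $i=0$, the composites $\delta_j\circ d_j$ and $d_j\circ\delta_j$ are exactly Kasparov product with $\Lambda'_{C^*_r(G)}$ and with $\Lambda'_{C_0(\E)\rtimes G}$. Both of these being the identity, the induced maps $d_j$ and $\delta_j$ are mutually inverse isomorphisms on $K$-theory, which is precisely the requirement that $C_0(\E)\rtimes G$ be a weak dual of $C^*_r(G)$. Finally, with $i=0$ the defining conditions of Definition \ref{def:swd} read $\Lambda_{C^*_r(G)}=1_{C^*_r(G)}$ and $\Lambda_{C_0(\E)\rtimes G}=(-1)^0 1_{C_0(\E)\rtimes G}=1_{C_0(\E)\rtimes G}$; both hold, upgrading the weak duality to genuine Spanier--Whitehead $K$-duality with the canonical unit $\delta$. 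By definition, this says that $G$ has Spanier--Whitehead duality.

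I expect no genuine obstacle at this stage: all the analytic and homological substance---the explicit computation of the two $\Lambda$-classes, most delicately the fact that $\gamma$ acts as the identity on $K_*$ after descent---is concentrated in Proposition \ref{prop:main}, which is proved separately. The only points that demand attention here are bookkeeping ones: verifying that the degree is indeed $i=0$, and tracking the flip between the two orderings $C^*_r(G)\otimes (C_0(\E)\rtimes G)$ and $(C_0(\E)\rtimes G)\otimes C^*_r(G)$ of the tensor factors in $d'$ and $\delta$, an asymmetry that is already absorbed into the braiding $\sigma$ built into the definitions of $\Lambda_A$ and $\Lambda_B$ in Lemma \ref{lem:spoin}.
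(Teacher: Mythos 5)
Your proposal is correct and matches the paper exactly: the paper states Theorem \ref{thm:swdm} as a direct consequence of Proposition \ref{prop:main}, and your write-up simply spells out the formal deduction---both $\Lambda'$-classes equal the respective units (using the hypothesis $\jmath^G_r(\gamma)=1_{C^*_r(G)}$ for the $C^*_r(G)$ side), so Lemma \ref{lem:spoin} with $i=0$ gives weak duality, and Definition \ref{def:swd} is then satisfied on the nose. The bookkeeping points you flag (degree zero, the braiding between tensor orderings, the canonical unit $\delta$) are handled just as the paper intends.
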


The previous result has a converse, see Theorem \ref{thm:equivswdbc} for further details.

\begin{theorem} 
If $\mu^G_\C$ is an isomorphism, $G$ has weak Spanier--Whitehead duality. 
\end{theorem}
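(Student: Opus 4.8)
The plan is to verify the two conditions of weak Spanier--Whitehead duality (Definition \ref{def:wpoin}) directly, taking as unit the canonical class $\delta=\delta_G\in\KK{}{\C}{C^*_r(G)\otimes C_0(\E)\rtimes G}$ and as counit the element $d'$ constructed earlier in this subsection. Since $\delta_G$ has degree zero we are in the case $i=0$, so every sign factor $(-1)^{ij}$ appearing in Lemma \ref{lem:spoin} is trivial. Proposition \ref{prop:main} already supplies $\Lambda'_{C^*_r(G)}=\jmath^G_r(\gamma)$ and $\Lambda'_{C_0(\E)\rtimes G}=1_{C_0(\E)\rtimes G}$. Writing $A=C^*_r(G)$ and $B=C_0(\E)\rtimes G$, it then remains to check that the induced maps $\delta_j\colon K^j(B)\to K_j(A)$ and $d'_j\colon K_j(A)\to K^j(B)$ are isomorphisms and mutually inverse.

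First I would show that $\delta_j$ is an isomorphism, and this is the only place the hypothesis is used. Specializing the commutative square displayed just after Lemma \ref{lem:gjj} to trivial coefficients, its lower horizontal arrow is exactly the map $\delta\,\hot_{C_0(\E)\rtimes G}-\colon K^*(B)\to K_*(A)$ defining $\delta_j$, and the square identifies it with $\mu^G_\C\circ\mathrm{GJ}^{-1}$. As $\mathrm{GJ}$ is invertible and $\mu^G_\C$ is an isomorphism by assumption, $\delta_j$ is an isomorphism.

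It then remains to identify $d'_j$ with its inverse. Because $\Lambda'_{C_0(\E)\rtimes G}=1_B$, Lemma \ref{lem:spoin} (with $i=0$) gives $d'_j\circ\delta_j(y)=1_B\,\hot_B\,y=y$, so $d'_j$ is a left inverse of $\delta_j$. The decisive point is then purely formal: a left inverse of a bijection is automatically its two-sided inverse, so $d'_j=\delta_j^{-1}$ is itself an isomorphism and mutually inverse to $\delta_j$, which is weak Spanier--Whitehead duality. I expect the only real obstacle to be the bijectivity of $\delta_j$, which is nothing other than the Baum--Connes isomorphism; the remainder is bookkeeping. As a consistency check one may note that Lemma \ref{lem:spoin} also gives $\delta_j\circ d'_j(y)=y\,\hot_A\,\jmath^G_r(\gamma)$, which our argument forces to equal $y$; this is no coincidence, since surjectivity of $\mu^G_\C$ already ensures, through the ring homomorphisms \eqref{ringhoms}, that $\gamma$ acts as the identity on $K_*(C^*_r(G))$, so the second composition needs no independent treatment.
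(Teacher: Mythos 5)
Your proof is correct and is essentially the paper's own argument: the paper presents this theorem as a direct consequence of Proposition \ref{prop:main}, and your filling-in---identifying $\delta_j$ with $\mu^G_\C\circ\mathrm{GJ}^{-1}$ via the commutative diagram following Lemma \ref{lem:gjj}, using $\Lambda'_{C_0(\E)\rtimes G}=1_{C_0(\E)\rtimes G}$ to make $d'_j$ a left inverse, and concluding by the formal fact that a left inverse of a bijection is a two-sided inverse---is exactly that consequence. Your closing remark correctly notes the consistency with $\Lambda'_{C^*_r(G)}=\jmath^G_r(\gamma)$ via the surjectivity criterion in \eqref{ringhoms}, which is the alternative way to handle the second composition.
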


\begin{theorem}\label{thm:pdual}
In general, if $\gamma\in\KKK^G(\C,\C)$ exists, then $C_0(\E)\rtimes G$ is a Spanier--Whitehead $K$-dual of $P_{\C}\rtimes G$. 
\end{theorem}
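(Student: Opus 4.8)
The plan is to deduce the statement from Proposition \ref{prop:main} by an idempotent-splitting argument: $P_{\C}\rtimes G$ is precisely the image of the idempotent $\jmath^G_r(\gamma)$ in $\KKK$, and Proposition \ref{prop:main} already exhibits $(C^*_r(G),C_0(\E)\rtimes G)$ as a ``duality up to $\jmath^G_r(\gamma)$''. First I would record the algebraic structure. After the summand construction $P_{\C}=yP$ recalled in Subsection \ref{subsec:ddd}, the Dirac and dual-Dirac morphisms satisfy $\alpha\hot\beta=1_{P_{\C}}$ and $\beta\hot\alpha=\gamma$ in $\KKK^G$. Applying the reduced descent $\jmath^G_r$, which is a functor compatible with the Kasparov product, and writing $p=\jmath^G_r(\alpha)\in\KKK(P_{\C}\rtimes G,C^*_r(G))$ and $i=\jmath^G_r(\beta)\in\KKK(C^*_r(G),P_{\C}\rtimes G)$, one obtains
\[
p\hot i=\jmath^G_r(\alpha\hot\beta)=1_{P_{\C}\rtimes G},\qquad i\hot p=\jmath^G_r(\beta\hot\alpha)=\jmath^G_r(\gamma).
\]
Thus $(P_{\C}\rtimes G,i,p)$ splits the idempotent $\jmath^G_r(\gamma)$, which by Proposition \ref{prop:main} is exactly $\Lambda'_{C^*_r(G)}$.

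Next I would transport the duality classes of Proposition \ref{prop:main}. Keeping $B=C_0(\E)\rtimes G$, I set
\[
\widehat\delta=\delta_G\hot(i\hatotimes 1_B),\qquad \widehat d=(p\hatotimes 1_B)\hot d',
\]
so that $\widehat\delta\in\KKK(\C,(P_{\C}\rtimes G)\otimes B)$ and $\widehat d\in\KKK((P_{\C}\rtimes G)\otimes B,\C)$ are a candidate unit and counit for the pair $(P_{\C}\rtimes G,B)$, all of degree zero. Using the description of $\Lambda$ in Lemma \ref{lem:spoin} together with naturality of the Kasparov product in the $C^*_r(G)$-leg, contracting over $B$ gives
\[
\widehat\Lambda_{P_{\C}\rtimes G}=p\hot\Lambda'_{C^*_r(G)}\hot i=p\hot(i\hot p)\hot i=(p\hot i)\hot(p\hot i)=1_{P_{\C}\rtimes G}.
\]
This settles the source side cleanly, and it is here that passing from $C^*_r(G)$ to its retract $P_{\C}\rtimes G$ repairs the defect $\Lambda'_{C^*_r(G)}=\jmath^G_r(\gamma)\neq 1$.

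The remaining, and main, point is the counit side $\widehat\Lambda_B$. Contracting $\widehat\delta$ against $\widehat d$ over $P_{\C}\rtimes G$ rather than over $B$ inserts the class $i\hot p=\jmath^G_r(\gamma)=\Lambda'_{C^*_r(G)}$ on the $C^*_r(G)$-strand of the zigzag that computes $\Lambda'_B$. I would then substitute $\Lambda'_{C^*_r(G)}=\delta_G\hot_B d'$ and straighten the resulting diagram: the inserted copy of the $C^*_r(G)$-snake, read together with the ambient zigzag, decomposes as the composition of two copies of the zigzag computing $\Lambda'_B$, each of which equals $1_B$ by Proposition \ref{prop:main}; hence $\widehat\Lambda_B=1_B$. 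This diagram-straightening, together with the bookkeeping of the flips $\sigma$ and the (here trivial) degree signs, is the only step requiring genuine care, since it is the place where $\Lambda'_B=1$ must be used twice rather than once.

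Finally, with $\widehat\Lambda_{P_{\C}\rtimes G}=1$ and $\widehat\Lambda_B=1$ and degree $i=0$, Lemma \ref{lem:spoin} shows that the induced maps $\widehat d_j$ and $\widehat\delta_j$ are mutually inverse isomorphisms, so the pair is in weak Spanier--Whitehead duality, and Definition \ref{def:swd} yields Spanier--Whitehead $K$-duality. As duality is symmetric (cf.\ the remark after Definition \ref{def:swd}), this is the asserted statement that $C_0(\E)\rtimes G$ is a Spanier--Whitehead $K$-dual of $P_{\C}\rtimes G$.
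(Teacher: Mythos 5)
Your proposal is correct and is essentially the paper's own proof: the paper likewise splits the idempotent $\jmath^G_r(\gamma)$ through classes $i_{P_\C\rtimes G}\in\KKK(P_\C\rtimes G,C^*_r(G))$ and $q_{P_\C\rtimes G}\in\KKK(C^*_r(G),P_\C\rtimes G)$ (your $p$ and $i$), and takes exactly your unit and counit, $\delta_{P_\C\rtimes G}=\delta\hot_{C^*_r(G)}q_{P_\C\rtimes G}$ and $d_{P_\C\rtimes G}=i_{P_\C\rtimes G}\hot_{C^*_r(G)}d'$. Two detail-level differences are worth recording. First, the paper produces the splitting abstractly (idempotents split in triangulated categories, citing Neeman), whereas you realize it concretely as $p=\jmath^G_r(\alpha)$, $i=\jmath^G_r(\beta)$ via $\alpha\hot_\C\beta=1_{P_\C}$, $\beta\hot_{P_\C}\alpha=\gamma$ and multiplicativity of descent; this is legitimate and has the small advantage of identifying the splitting maps explicitly. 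Second, the paper proves in detail only the triangle identity on the $P_\C\rtimes G$ side --- by a diagram over an arbitrary test algebra $D$ whose content is precisely your computation $p\hot(i\hot p)\hot i=(p\hot i)\hot(p\hot i)=1$ --- and dismisses the other side with ``the other one is similarly proved.'' Your treatment of that second identity is therefore a genuine completion of a step the paper omits: inserting $i\hot p=\jmath^G_r(\gamma)=\Lambda'_{C^*_r(G)}=\delta\hot_{C_0(\E)\rtimes G}d'$ on the $C^*_r(G)$-strand and re-associating the resulting network into $\Lambda'_{C_0(\E)\rtimes G}\hot_{C_0(\E)\rtimes G}\Lambda'_{C_0(\E)\rtimes G}=1_{C_0(\E)\rtimes G}$ uses nothing beyond Proposition \ref{prop:main} and associativity of the Kasparov product, whereas a literal ``similar'' diagram for this side would require an extra input (e.g.\ that $\jmath^G_r(\gamma)$ absorbs into $d'$, or that $\gamma$ acts as the identity on $\KKK^G(C_0(\E),-)$), which your factorization trick avoids. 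Since all classes involved are even, the sign and flip bookkeeping you flag is indeed vacuous, and your concluding appeal to Lemma \ref{lem:spoin} and Definition \ref{def:swd} is the correct way to finish.
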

\begin{proof} Note that $P_{\C}\rtimes G$ is a direct summand (in the categoy $\KKK$) of $C^*_r(G)$ corresponding to the idempotent $\jmath^G_r(\gamma) \in \KKK(C^\ast_r(G), C^\ast_r(G))$ (see \cite[Proposition 1.6.8]{nee:tri})). Namely, we have
\[
i_{P_\C\rtimes G}\in \KKK(P_\C\rtimes G, C^*_r(G)), \,\,\, q_{P_\C\rtimes G}\in \KKK(C^*_r(G), P_\C\rtimes G), 
\]
so that $q_{P_\C\rtimes G}\circ i_{P_\C\rtimes G}=1_{P_\C\rtimes G}$ and $i_{P_\C\rtimes G}\circ q_{P_\C\rtimes G}=\jmath^G_r(\gamma)$. We set
\begin{align*}
d_{P_\C\rtimes G} = i_{P_\C\rtimes G}\,\hot_{C^*_r(G)} \,d' \in &\KKK(C_0(\E)\rtimes G\otimes P_\C\rtimes G, \C),\\
\delta_{P_\C\rtimes G} = \delta\,\hot_{C^*_r(G)}\, q_{P_\C\rtimes G} \in &\KKK(\mathbb{C}, C_0(\E)\rtimes G\otimes P_\C\rtimes G).
\end{align*}
Then, we have 
\[
\delta_{P_\C\rtimes G} \,\hot_{C_0(\E)\rtimes G}\, d_{P_\C\rtimes G}=1_{P_\C\rtimes G}, \,\,\, \delta_{P_\C\rtimes G} \,\hot_{P_\C\rtimes G}\, d_{P_\C\rtimes G}=1_{C_0(\E)\rtimes G}.
\]
 This proves the statement. We only prove the first identity, the other one is similarly proved. For any $C^*$-algebra $D$, we have the following commutative diagram:
 \begin{align*}
\xymatrix{
 \KKK(P_\C\rtimes G, D) \ar[d]_-{}_{\delta_{P_\C\rtimes G}\,\hot_{P_\C\rtimes G}\,-}   \ar[r] & \KKK(C^*_r(G), D) \ar[d]_{\delta\,\hot_{C^*_r(G)}\,-}   \\
 \KKK(\C, C_0(\E)\rtimes G\otimes D) \ar[d]_-{}^{-\,\hot_{C_0(\E)\rtimes G}\,d_{P_\C\rtimes G} }  \ar[r]^-{=}& \KKK(\mathbb{C}, C_0(\E)\rtimes_rG\otimes D) \ar[d]^{-\,\hot_{C_0(\E)\rtimes_rG}\,d' }   \\
 \KKK(P_\C\rtimes G, D)   \ar[r] & \KKK(C^*_r(G), D).
}
\end{align*} 
Here, the top and the bottom horizontal arrows are induced by $i_{P_\C\rtimes G}$ and $q_{P_\C\rtimes G}$. The right vertical composition is induced by $\jmath^G_r(\gamma)$. It follows, the left vertical composition is the identity. Taking $D=P_\C\rtimes G$, we get 
\[
\delta_{P_\C\rtimes G} \,\hot_{C_0(\E)\rtimes G}\, d_{P_\C\rtimes G}=1_{P_\C\rtimes G}.
\]
\end{proof}
 
\begin{proof}[Proof of Proposition \ref{prop:main}]
We directly compute and prove
\[
\delta\,\hot_{C_0(\E)\rtimes G}\,d' = \jmath^G_r(\gamma) \in \KKK(C^\ast_r(G), C^\ast_r(G)).
\]
For simplicity, we prove this for the case when $\beta$ is represented by a cycle $(P, b)$ where $b$ is an essential unitary in $M(P)$ and if $\alpha$ is represented by a cycle $(H, F)$ where $P$ is represented on $H$ non-degenerately and $F$ is a $G$-equivariant essential unitary modulo $P$. Then, $d''$ is represented by the cycle of the form
\[
(H\otimes \ell^2(G), \rho\otimes\pi, N(g(b))_{g\in G} + M(g(F))_{g\in G})
\]
where the $G$-action on $H\otimes \ell^2(G)$ is the tensor product of the $G$-action on $H$ and the left regular representation on $\ell^2(G)$, $\pi$ is a representation of $C_0(\E)$ on $H\otimes \ell^2(G)$ given by $\phi\mapsto (\phi)_{g\in G}$ and where $\rho$ is a representation of $C^\ast_r(G)$ on $H\otimes \ell^2(G)$ by the right regular representation $g\mapsto 1\otimes\rho_g$. Here, $M$ and $N$ are given by Kasparov Technical Theorem as usual \cite{hig:ktecht,kas:consp,kas:opKfunct}. If we compute $\delta\otimes_{C_0(\E)\rtimes G}d'$, we get the cycle isomorphic to 
\[
(H\rtimes_rG, \pi_G, T_0\rtimes_r1) = \jmath^G_r((H, T_0))
\]
where $(H, T_0)$ is a cycle for $\KKK^G(\mathbb{C}, \mathbb{C})$, $\pi_G$ is the natural left multiplication by $C^\ast_r(G)$ and $T_0= N_0b + M_0F_0$. Here $F_0$ is the average of $F$: $F_0=\int_G g(c)Fg(c)\,d\mu_G$ and so are $N_0$ and $M_0$. The cycle $(H, T_0)$ is (homotopic to) a Kasparov product of $\alpha$ and $\beta$. In other words, the element $[H, T_0]$ is the gamma element $\gamma$. It follows 
\[
\delta\,\hot_{C_0(\E)\rtimes G}\,d' = \jmath^G_r(\gamma).
\]
Now, we can prove
\[
\delta\,\hot_{C^\ast_r(G)}\,d' = 1_{C_0(\E)\rtimes G} \in \KKK(C_0(\E)\rtimes G, C_0(\E)\rtimes G)
\]
using a simple trick. We have the following diagram for $B=C_0(\E)\rtimes G$ with the trivial $G$ action:

\begin{align*}
\xymatrixcolsep{3.8pc}\xymatrix{
\KKK^G(C_0(\E), B) \ar[d]^{\cong}  \ar[r]^-{\mu^G_B} & \KKK(\mathbb{C}, C^\ast_r(G)\otimes B) \ar[d]^{=} \ar[r]^-{(\mu^G_B)^{-1}} &  \KKK^G(C_0(\E), B) \ar[d]^{\cong} \\
\KKK(B, B)   \ar[r]^-{\delta\,\hot_{B}\,-}  & \KKK(\mathbb{C}, C^\ast_r(G)\otimes B)  \ar[r]^-{-\,\hot_{C^*_r(G)}\,d}  & \KKK(B, B).
}
\end{align*} 
Here, by $(\mu^G_B)^{-1}$ we simply mean the left inverse of $\mu^G_B$. This shows $\delta\otimes_{C^\ast_r(G)}d'$ acts as the identity on $\KKK(B, B)$, proving the claim.
\end{proof}

\begin{remark}
The previous proof also shows that $d=d^\prime$, as it is intuitive from the fact that the $\gamma$-element can be represented by a cycle satisfying property $(\gamma)$ \cite{nish:dsplitb}.
\end{remark}

\begin{remark}\label{rem_Kasparovdual}
It is natural to use the duality class $\Theta$ from Subsection \ref{subsec:ekd} to prove Theorem \ref{thm:pdual}. The argument is based on the following diagram, where we have set $d^\prime=\mathrm{GJ}(s_P\,\hot_P\,\alpha)$, and $\mu^G_{P\rtimes G,P}$ is a bivariant assembly map (cf. Section \ref{sec:cons}).
\begin{equation*}
\xymatrixcolsep{5pc}\xymatrix{%
 \KK{}{\C}{P\rtimes G\otimes C_0(\E)\rtimes G} \ar[r]^-{-\,\hot_{C_0(\E)\rtimes G}\,d^\prime } & \KK{}{P\rtimes G}{P\rtimes G}\\
\KK{G}{C_0(\E)}{P\otimes C_0(\E)\rtimes G} \ar[d]^-{p^*_{\E}}_{\cong} \ar[u]^-{\mu^G_{P\otimes C_0(\E)\rtimes G}}_{\cong} \ar[r]^-{-\,\hot_{C_0(\E)\rtimes G}\,d^\prime } & \KK{G}{C_0(\E)\otimes P\rtimes G }{P} \ar[u]^-{\mu^G_{P\rtimes G,P}} \ar[d]^-{p^*_{\E}}_{\cong}\\
\RKK{G}{C_0(\E)}{P\otimes C_0(\E)\rtimes G}  \ar[r]^-{-\,\hot_{C_0(\E)\rtimes G}\,d^\prime } & \RKK{G}{C_0(\E)\otimes P\rtimes G }{P}
}
\end{equation*}
Set $e=\mathrm{GJ}^{-1}(1_{C_0(\E)\rtimes G})$ and consider the element $\delta_0=\Theta\,\hot_{C_0(\E)}\, e$ in the bottom left group. Suppressing $p^*_{\E}$ from the notation, we compute
\begin{align*}
\delta_0\,\hot_{C_0(\E)\rtimes G}\, d^\prime &=\Theta\,\hot_{C_0(\E)}\, e\,\hot_{C_0(\E)\rtimes G}\, \mathrm{GJ}(s_P\,\hot_P\,\alpha) \\
&=( \Theta\,\hot_P\,\alpha)\,\hot_\C (e\,\hot_{C_0(\E)\rtimes G}\, \mathrm{GJ}(s_P))=s_P.
\end{align*}
Now it is routine to check that $\mu^G_{P\rtimes G,P}(s_P)=1_{P\rtimes G}$. Hence if we define
\[
\delta_{P\rtimes G}\in \KK{}{\C}{P\rtimes G\otimes C_0(\E)\rtimes G}
\]
by sending $\delta_0$ through the left vertical isomorphism in the diagram above,  we have 
\[
\delta_{P\rtimes G}\,\hot_{C_0(\E)\rtimes G}\, d^\prime=1_{P\rtimes G}.
\] 
The other identity is similarly proved, we skip the details. 

Note that this is an improvement over Theorem \ref{thm:pdual}, because the existence of $\Theta$ is strictly weaker than having a gamma element. A similar argument shows that in general, if $P_\C$ is a (categorical) direct summand of some proper algebra, the conclusion of Theorem \ref{thm:pdual} holds, namely $C_0(\E)\rtimes G$ is a Spanier--Whitehead $K$-dual of $P_{\C}\rtimes G$. 
\end{remark}

\subsection{The torsion-free case}\label{subsec:tfree}
We treat the torsion-free case separately, partly because it is particularly simple (e.g., condition (1) of Definition \ref{def:gamma} reduces to a statement in non-equivariant $K$-theory), partly because it is among the first cases where the duality classes (i.e., unit and counit) have been identified (albeit in a slightly different language, cf. \cite[Theorems 6.6 and 6.7]{kas:descent}). 

We assume that $G$ is a countable, discrete, torsion-free group. In this case, because proper actions are automatically free, the space $\underline{E}G$ is identified as the total space $EG$ of the classifying space for principal $G$-bundles, and our assumption that $G$ admits a $G$-compact model of $\E$ translates into the assumption that $G$ admits a compact model of $BG$. Denote by $[\mathrm{MF}]$, the class
\[
[\mathrm{MF}] \in \KKK(\mathbb{C}, C^\ast_r(G)\otimes C(BG))
\]
associated to the module of sections of the Mi\v{s}\v{c}enko bundle. This is the Hermitian bundle of $C^*$-algebras obtained from the associated bundle construction 
\[
EG \times_G C^*_r(G)\to BG,
\]
where $G$ acts diagonally, acting on the reduced group $C^*$-algebra via the left regular representation \cite{mf:bundle}.

\begin{proposition}[\cite{connes:ncg}, for a proof see \cite{valjens:misb}]%\label{prop:ConnesM}
The Mi\v{s}\v{c}enko module $\mathrm{MF}$ is the finitely generated projective Hilbert $C^*$-module, described as the completion of $C_c(EG)$ with respect to the norm induced by the following $C^*_r(G)\,\hot\,C(BG) $-valued inner product:
\begin{equation}\label{eq:con1}
\bra{\xi}\ket{\zeta}(t)(x) = \sum_{p(y)=x} \bar{\xi}(y)\zeta(y \cdot t),
%
%\sum_{g\in G} \bra{\xi}\ket{\zeta}_g\otimes \la_g\label{eq:con1}\\\notag
%\bra{\xi}\ket{\zeta}_g(x)= ,
\end{equation}
where $\xi,\zeta\in C_c(EG)$, $t \in G$, $x \in BG$ and $p\colon EG\to BG$ is the quotient map. The right action of $C^*_r(G)\,\hot\,C(BG) $ on $M$ is defined by
\begin{equation}\label{eq:con2}
(\xi \cdot f)(y)=\sum_{g \in G} f(g)(p(y)) \cdot \xi(y \cdot g^{-1}),
\end{equation}
where $\xi\in C_c(EG)$, $f\in C_c(G, C(BG))$ and $y \in EG$.
\end{proposition}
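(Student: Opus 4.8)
The plan is to verify in turn that the formulas \eqref{eq:con1} and \eqref{eq:con2} make $C_c(EG)$ into a pre-Hilbert $C^*_r(G)\hot C(BG)$-module, that the completion is finitely generated and projective, and that it is isomorphic to the section module of the bundle $EG\times_G C^*_r(G)\to BG$. Throughout, the essential geometric input is that the $G$-action on $EG$ is free and proper with $G$-compact quotient $BG$.

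First I would treat well-definedness and the module axioms. Since each fibre $p^{-1}(x)$ is a single free orbit and $\xi,\zeta$ are compactly supported, the sums in \eqref{eq:con1} and \eqref{eq:con2} are finite and independent of the representative $y_0\in p^{-1}(x)$; they yield elements of $C_c(G,C(BG))\subseteq C^*_r(G)\hot C(BG)$ and of $C_c(EG)$ respectively. Sesquilinearity, the relation $\bra{\xi}\ket{\zeta}^*=\bra{\zeta}\ket{\xi}$, and compatibility $\bra{\xi}\ket{\zeta\cdot f}=\bra{\xi}\ket{\zeta}\,f$ reduce to bookkeeping with the convolution product on $C_c(G,C(BG))$. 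The one substantive point is positivity: fixing $x$ and $y_0\in p^{-1}(x)$ and putting $v\in C_c(G)\subseteq\ell^2(G)$, $v(g)=\xi(y_0 g)$, one recognizes $\bra{\xi}\ket{\xi}(t)(x)=\langle v,\rho_t v\rangle_{\ell^2(G)}$, a diagonal matrix coefficient of the right regular representation $\rho$. Hence $t\mapsto\bra{\xi}\ket{\xi}(t)(x)$ is a positive-definite function, continuous in $x$, so $\bra{\xi}\ket{\xi}\ge 0$ in $C^*_r(G)\hot C(BG)$. The same computation exhibits each fibre as the standard Hilbert module $\ell^2(G)\cong C^*_r(G)$, which motivates the last step.

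Next I would establish finite generation and projectivity. The bundle $EG\times_G C^*_r(G)$ is locally trivial over the compact base $BG$ with fibre the rank-one free module $C^*_r(G)$, so it ought to be finitely generated projective over $C^*_r(G)\hot C(BG)$; to make this explicit I would produce a finite frame in the $C_c(EG)$-picture. Covering $BG$ by finitely many opens $U_1,\dots,U_n$ over which $p$ trivializes, choosing local sections and a subordinate partition of unity, and combining these with a cutoff function (available because $EG$ is $G$-compact, i.e.\ $c\in C_c(EG)$ with $\sum_{g}c(yg)^2=1$), I would assemble functions $c_1,\dots,c_n\in C_c(EG)$ satisfying the reconstruction identity $\xi=\sum_i c_i\,\bra{c_i}\ket{\xi}$ for all $\xi$. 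Then $e=(\bra{c_i}\ket{c_j})_{ij}$ is a self-adjoint idempotent in $M_n(C^*_r(G)\hot C(BG))$ and $\xi\mapsto(\bra{c_i}\ket{\xi})_i$ identifies the completion with $e\cdot(C^*_r(G)\hot C(BG))^n$. I expect this to be the main obstacle: verifying that the $c_i$ form a genuine Parseval frame requires careful bookkeeping of the convolution and of the overlaps $\supp(c)\cap\supp(c)\cdot g$, and properness of the action is what makes the relevant sums finite.

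Finally I would carry out the identification with sections of $EG\times_G C^*_r(G)$. A class $\xi\in C_c(EG)$ determines the $G$-invariant field $x\mapsto v_x\in\ell^2(p^{-1}(x))$ from the first step, and under $\ell^2(G)\cong C^*_r(G)$ this is exactly a (compactly supported) section of the Mi\v{s}\v{c}enko bundle, the gluing over $BG$ being by the left regular representation. Tracing the definitions shows that the right action \eqref{eq:con2} becomes the fibrewise right $C^*_r(G)$-action together with $C(BG)$-pointwise multiplication, while the inner product \eqref{eq:con1} becomes the fibrewise standard inner product read pointwise over $BG$; in particular the map is isometric for the two module norms. Density of $C_c(EG)$ on one side and approximation of arbitrary sections by compactly supported ones on the other then upgrade it to an isomorphism of Hilbert modules, and the previous step certifies that the resulting module is finitely generated projective, so that $[\mathrm{MF}]$ is a well-defined class in $\KKK(\C,C^*_r(G)\hot C(BG))$.
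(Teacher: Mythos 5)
Your proposal is correct, but note that the paper itself offers no proof of this proposition: it is stated with citations to Connes and to the reference \cite{valjens:misb}, where the argument given is essentially the standard one you outline (module axioms and positivity, a finite frame over the compact base, identification with sections of the associated bundle). So there is no internal proof to compare against; your write-up fills the gap along the expected route, and the positivity step in particular is sound --- the matrix coefficient $t\mapsto\langle v,\rho_t v\rangle_{\ell^2(G)}$ does define a positive element of $C^*_r(G)$, as one checks by pairing against a vector $w$ and recognizing $\|\sum_s v(s)\lambda_s w\|^2$.

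Two small remarks. First, the step you flag as the main obstacle is actually cleaner than you anticipate: since $G$ is discrete, a local section $s_i\colon U_i\to EG$ of the covering map has \emph{clopen} image in $p^{-1}(U_i)\cong U_i\times G$, so taking $c_i=\sqrt{\phi_i\circ p}$ on $s_i(U_i)$ and $0$ elsewhere gives continuous compactly supported functions for which the reconstruction identity $\xi=\sum_i c_i\,\bra{c_i}\ket{\xi}$ holds \emph{exactly} --- for fixed $y$ the double sum collapses to the single term $z=s_i(p(y))$, $g$ the unique group element with $s_i(p(y))g=y$, yielding $\sum_i\phi_i(p(y))\xi(y)=\xi(y)$; no overlap bookkeeping is needed. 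Second, a point of precision: the fibre completions are not $\ell^2(G)$ but rather $C^*_r(G)$ viewed as a Hilbert module over itself (these are different completions of $C_c(G)$); your identification with the Mi\v{s}\v{c}enko bundle, whose fibre is $C^*_r(G)$, is correct precisely because one uses the module norm $\|\bra{v}\ket{v}\|_{C^*_r(G)}^{1/2}$, so the $\ell^2$ phrasing should be adjusted, though nothing in the argument breaks.
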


We have for any separable $C^\ast$-algebra $B$ with trivial $G$-action \cite{markus:ass, valjens:misb},
\begin{align*}
\xymatrixcolsep{5pc}\xymatrix{
\KKK(C(BG), B) \ar[d]^{\cong}  \ar[r]^-{[\mathrm{MF}]\,\hot_{C(BG)}\,-} & \KKK(\mathbb{C}, C^\ast_r(G)\otimes B) \ar[d]^{=}   \\
\KKK^G(C_0(EG), B)   \ar[r]^-{\mu^G_B} & \KKK(\mathbb{C}, C^\ast_r(G)\otimes B)    
}
\end{align*}

The vertical isomorphism above is implemented by the strong Morita equivalence between $C(BG)$ and $C_0(EG)\rtimes G$ \cite{rieffel:morita}, whose associated $\KKK$-class is denoted $[Y^*]$ below (we use $[Y]$ for the opposite module).

%Let $(H, T)$ be a $G$-equivariant Kasparov cycle with property $(\gamma)$ and $x$ be the corresponding element in $\KKK^G(\mathbb{C}, \mathbb{C})$. 

If $G$ admits a compact non-positively curved manifold as a model for $BG$, then the element $\underline{d}$ was introduced by Kasparov \cite{kas:descent} as a ``dual-Dirac'' class 
\[
\underline{d}\in \KK{}{C^*_r(G)\hatotimes C(BG)}{\mathbb{C}}.
\]

To be more consistent with the terminology of this paper, $\underline{d}$ should be called the duality counit induced by the $\gamma$-element (which exists in this situation). Kasparov went on to show that $\underline{d}$ defines a left inverse for the assembly map.

Hence we see that we are in a situation where Spanier--Whitehead duality comes into play very naturally, with the choice $\mathrm{MF}=\text{unit}$ and $\underline{d}=\text{counit}$. Note that, while the class $\underline{d}$ requires structural information on the group, the class of the Mi\v{s}\v{c}enko bundle relies on very little structure. This is in complete analogy with the canonical unit defined previously.

\begin{proposition}
The class $\mathrm{MF}$ coincides with $\delta_G$ from Definition \ref{def:cunit} up to $\KKK$-equivalence. More precisely, we have
\begin{equation*}
\delta_G= \mathrm{MF}\,\hot_{C^*_r(G)\hot C(BG)}\, \tau_{C^*_r(G)}([Y^*]).
\end{equation*}
\end{proposition}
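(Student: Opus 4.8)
The plan is to avoid manipulating the Rieffel imprimitivity bimodule at the cocycle level and instead read the identity off the two naturality diagrams already established: the Mi\v{s}\v{c}enko diagram stated immediately above the proposition, and the $\delta$-assembly diagram of the General Framework section. The only additional input is that the Rieffel equivalence is invertible in $\KKK$, so that the opposite modules furnish mutually inverse classes, one in $\KKK(C(BG),C_0(EG)\rtimes G)$ and one in $\KKK(C_0(EG)\rtimes G,C(BG))$. Everything is then a formal cancellation.

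First I would unwind the left vertical isomorphism of the Mi\v{s}\v{c}enko diagram. It factors as Morita equivalence followed by Green--Julg, i.e.\ as precomposition with a class $[Y]\in\KKK(C_0(EG)\rtimes G,C(BG))$ and then $\mathrm{GJ}^{-1}$; call its $\KKK$-inverse $[Y^*]\in\KKK(C(BG),C_0(EG)\rtimes G)$, which is exactly the typing required for $\tau_{C^*_r(G)}([Y^*])$ to appear in the proposition. Commutativity of that diagram reads $[\mathrm{MF}]\hot_{C(BG)}-=\mu^G_B\circ\mathrm{vert}$. Next I would substitute the factorization of the assembly map furnished by the earlier diagram, $\mu^G_B=(\delta_G\hot_{C_0(EG)\rtimes G}-)\circ\mathrm{GJ}$, and cancel the two copies of Green--Julg. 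By associativity of the Kasparov product this yields, for every separable $B$ with trivial $G$-action, the equality of induced maps
\[
[\mathrm{MF}]\hot_{C(BG)}-=\bigl(\delta_G\hot_{C_0(EG)\rtimes G}[Y]\bigr)\hot_{C(BG)}-.
\]
Evaluating both sides on $B=C(BG)$ at the unit $1_{C(BG)}$, where Kasparov product with the unit is the identity, strips away the universally quantified argument and leaves the identity of elements
\[
[\mathrm{MF}]=\delta_G\hot_{C_0(EG)\rtimes G}[Y]=\delta_G\hot_{C^*_r(G)\otimes C_0(EG)\rtimes G}\tau_{C^*_r(G)}([Y]),
\]
the second equality recording that $[Y]$ acts on the $C_0(EG)\rtimes G$ tensor factor of $\delta_G$, hence enters through $\tau_{C^*_r(G)}$.

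Finally I would compose on the right with $\tau_{C^*_r(G)}([Y^*])$. Functoriality of $\tau$ together with invertibility of the Morita equivalence, $[Y]\hot_{C(BG)}[Y^*]=1_{C_0(EG)\rtimes G}$, gives $\tau_{C^*_r(G)}([Y])\hot\tau_{C^*_r(G)}([Y^*])=\tau_{C^*_r(G)}(1)=1_{C^*_r(G)\otimes C_0(EG)\rtimes G}$. Associativity then collapses the right-hand side to $\delta_G$ and produces precisely the claimed formula $\delta_G=[\mathrm{MF}]\hot_{C^*_r(G)\otimes C(BG)}\tau_{C^*_r(G)}([Y^*])$.

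The main obstacle is bookkeeping rather than substance. One must pin down the exact direction in which the Morita class implements the vertical isomorphism of the Mi\v{s}\v{c}enko diagram, checking that the class appearing in the proposition is its $\KKK$-inverse (this is forced by the typing of $\tau_{C^*_r(G)}([Y^*])$, not reverse-engineered), and one must track the ordering of the $C^*_r(G)$ tensor factor against the flip that is implicit in the two diagrams so that each product is interpreted through $\tau_{C^*_r(G)}$ on the correct slot. One should also confirm that the $\delta_G$ of Definition \ref{def:cunit} is indeed the element implementing the bottom row of the assembly diagram, as asserted there. A fully hands-on alternative---forming the internal tensor product of the two completions of $C_c(EG)$, namely $\mathrm{MF}\otimes_{C(BG)}Y^*$, and identifying the resulting Hilbert module with the range of the projection $\Delta(p_G)$---is available but considerably more laborious, and I would keep it in reserve only as a consistency check on the conventions.
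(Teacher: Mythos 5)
Your proof is correct, but it follows a genuinely different route from the paper's. The paper never invokes the commutativity of the Mi\v{s}\v{c}enko diagram; instead it argues at the level of explicit modules. It sets $Z=\mathrm{GJ}^{-1}([Y])$ and uses two concrete inputs: the Hilbert-module identification $\mathrm{MF}\cong i^*(Y^*)\,\hot_{C_0(EG)\rtimes G}\,(Z\rtimes_r G)$ from \cite{valjens:misb} (with $i\colon\C\hookrightarrow C(BG)$ the inclusion as constant functions) and the identity $[p_G]=i^*([Y^*])$ from \cite{markus:ass}; it then rewrites $\jmath^G_r(Z)=[\Delta]\,\hot\,\tau_{C^*_r(G)}(\mathrm{GJ}(Z))$ via Lemma \ref{lem:desfact} and finishes with the same Morita cancellation $\tau_{C^*_r(G)}([Y])\,\hot\,\tau_{C^*_r(G)}([Y^*])=1$ that you use. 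You replace those two module-level inputs by the commutativity of the Mi\v{s}\v{c}enko diagram itself, which the paper states (with citations) just above the proposition, and you extract the identity of elements by evaluating the two natural transformations at $B=C(BG)$, $x=1_{C(BG)}$; this Yoneda-style evaluation is valid, and your bookkeeping caveats resolve exactly as you predict -- typing forces the downward vertical arrow to be $\mathrm{GJ}^{-1}\circ([Y]\,\hot_{C(BG)}\,-)$ with $[Y]\in\KKK(C_0(EG)\rtimes G,C(BG))$, which is consistent with the paper's convention $\mathrm{GJ}(Z)=[Y]$. The trade-off: your argument is shorter and purely formal, but it outsources the substantive content to the cited diagram; the paper's argument, given the $\delta$-diagram, in effect re-derives that diagram from explicit module data, which fits its aim of exhibiting the duality classes concretely. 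There is no circularity in your approach, since the diagram is asserted as known prior to the proposition, so both proofs are legitimate.
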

\begin{proof}
Let us set $Z=\mathrm{GJ}^{-1}([Y])\in\KK{G}{C_0(EG)}{C(BG)}$. It is shown in \cite{valjens:misb} that $Z$ is represented by a $G$-$C^*$-correspondence satisfying the following isomorphism of Hilbert modules
\[
\mathrm{MF}\cong i^*(Y^*)\,\hot_{C_0(EG)\rtimes G} \,(Z\rtimes_r G)
\]
(we are denoting by $i$ the inclusion $\C\hookrightarrow C(BG)$ as constant functions).
We want to prove
\[
[p_G]\,\hot_{C_0(EG)\rtimes G}\, [\Delta]=i^*([Y^*])\,\hot_{C_0(EG)\rtimes G}\, \jmath^G_r(Z) \,\hot_{C^*_r(G)\hot C(BG)}\, \tau_{C^*_r(G)}([Y^*]),
\]
or equivalently, by Lemma \ref{lem:desfact},
\begin{multline*}
[p_G]\,\hot_{C_0(EG)\rtimes G}\, [\Delta]=\\i^*([Y^*])\,\hot_{C_0(EG)\rtimes G}\, ([\Delta]\hatotimes\tau_{C^*_r(G)}(\mathrm{GJ}(Z))) \,\hot_{C^*_r(G)\hot C(BG)}\, \tau_{C^*_r(G)}([Y^*]).
\end{multline*}
It is well-known that $[p_G]=i^*([Y^*])$ (see for example \cite{markus:ass}), so that by associativity of the Kasparov product we have reduced the problem to showing
\[
\tau_{C^*_r(G)}(\mathrm{GJ}(Z))) \,\hot\, \tau_{C^*_r(G)}([Y^*])=\tau_{C^*_r(G)}(\mathrm{GJ}(Z)\,\hot_{C(BG)}\,[Y^*])=1_{C^*_r(G)\hot C_0(EG)\rtimes G}.
\]
Now $\mathrm{GJ}(Z)=[Y]$ by construction, hence the proof is complete.
\end{proof}

Now suppose that $G$ is a general torsion-free group, and that a $(\gamma)$-element $x=[H, T]$ exists. Inspired by Kasparov's construction, we define the class $\underline{d}$ in $\KKK(C^\ast_r(G)\otimes C(BG), \C)$ by setting
\[
\underline{d}= [Y]\,\hot_{C_0(\E)\rtimes G}\, d.
\]
The element $\underline{d}$ admits a simple description in terms of the cycle $(H, T)$ with property $(\gamma)$ as follows. The $G$-equivariant non-degenerate representation $\pi$ of $C_0(\E)$ on $H$ extends to the one of the multiplier algebra $C_b(\E)$. Together with the representation $\pi_G$ of $G$ on $H$, it induces the representation $\pi_G\otimes \pi$ of $C^*_r(G)\otimes C(BG)$ on $H$. Here, $C(BG)$ is naturally identified as the subalgebra $C_b(\E)$ consisting of $G$-invariant functions. The representation $\pi_G$ extends to the one for $C^\ast_r(G)$ since $\pi_G$ is weakly contained in the left regular representation. Indeed, $\pi_G$ is contained in the (amplified) left regular representation as we have a $G$-equivariant embedding from $H$ to $H\otimes \ell^2(G)$ given by
\[
v \quad \mapsto \quad \sum_h \pi(h(c))v\otimes\delta_h.
\]

\begin{proposition} The triple $(H, \pi_G\otimes \pi,T)$ defines a Kasparov cycle $[\pi_G\otimes \pi, H, T]$ for $\KKK(C^\ast_r(G)\otimes C(BG), \C)$. We have $[\pi_G\otimes \pi, H, T]=\underline{d}$.
\end{proposition}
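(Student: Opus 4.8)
The plan is to establish the two assertions in turn: first that the triple is a genuine Kasparov cycle, and then that its class equals $\underline{d}=[Y]\hot_{C_0(\E)\rtimes G}d$. For the cycle conditions, the representation $\pi_G\otimes\pi$ is well defined: $\pi_G$ extends to $C^\ast_r(G)$ by the weak containment exhibited through the $G$-equivariant isometry $v\mapsto\sum_h\pi(h(c))v\otimes\delta_h$, while $\pi$ restricts to $C(BG)=C_b(\E)^G$, and the two representations commute because every $f\in C(BG)$ is $G$-invariant, so $\pi(f)$ commutes with $\pi_G(g)=U_H(g)$. The conditions $T=T^\ast$ and $T^2-1\in K(H)$ are inherited from $(H,T)\in\KK{G}{\C}{\C}$, and $[\pi_G(g),T]$ is compact by equivariance of the cycle, i.e.\ $U_H(g)TU_H(g)^\ast-T\in K(H)$. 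The only substantial point is compactness of $[\pi(f),T]$ for $f\in C(BG)$, which is where property $(\gamma)$ is needed.

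To handle $S=[\pi(f),T]$, I would first use the resolution $\pi(f)=\sum_g\pi(g(c))\pi(f)\pi(g(c))$ (valid since $\sum_g g(c)^2=1$ and functions commute) and commute $T$ past one cutoff in each term, obtaining $S=\Phi(S)+R$, where $\Phi(X)=\sum_g\pi(g(c))X\pi(g(c))$ and $R$ collects terms $\pi(g(c))\pi(f)[\pi(g(c)),T]$ and $[\pi(g(c)),T]\pi(f)\pi(g(c))$. Since $c\in C_c(\E)\subseteq C_0(\E)$, condition (2.1) gives $g\mapsto[\pi(g(c)),T]\in C_0(G,K(H))$, so $R$ is a norm-convergent sum of compacts with norms tending to zero; properness (finite overlap of the translates $g(c)$) ensures convergence and $R\in K(H)$. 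Separately, setting $\phi=cf\in C_c(\E)$ and commuting $T$ past the compactly supported $g(\phi),g(c)$, again via (2.1), the two $T$-terms in $\pi(g(c))S\pi(g(c))$ cancel, leaving $\Phi(S)$ as a sum of the same compact-times-cutoff type, hence $\Phi(S)\in K(H)$. Therefore $S=\Phi(S)+R$ is compact, and the triple is a Kasparov cycle.

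For the identification with $\underline{d}$, the plan is to compute $[Y]\hot_{C_0(\E)\rtimes G}d$ directly, using $d=\mathrm{GJ}(\tilde{x})$ and the imprimitivity bimodule $Y$ implementing $C(BG)\sim C_0(\E)\rtimes G$. The product module $Y\otimes_{C_0(\E)\rtimes G}(H\otimes\ell^2(G))$ is identified with $H$ through the unitary $\xi\otimes(v\otimes\delta_h)\mapsto U_H(h^{-1})\pi(\xi)v$; the twist by $U_H(h^{-1})$ is exactly what converts the right regular action $\rho$ carried by $d$ into $\pi_G$, while the left $C(BG)$-action on $Y$ becomes $\pi$ restricted to $C(BG)$. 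Since $[Y]$ has zero operator, the product operator is the connection $1\otimes(g(T))_g$, which under this unitary becomes an operator $\hat{T}$ on $H$; it then remains to check $\hat{T}\equiv T$ modulo compacts, which is precisely condition (2.2), with (2.1) absorbing the off-diagonal corrections. A compact perturbation then yields $[\pi_G\otimes\pi,H,T]=\underline{d}$.

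The main obstacle is the operator comparison in the product: verifying that the connection $1\otimes(g(T))_g$ transported to $H$ agrees with $T$ up to a compact operator. This is the step where property $(\gamma)$ does its essential work, and it must be combined with the convergence — via properness and $G$-compactness — of the sums of translated cutoffs. A secondary but delicate point, appearing both in the cycle verification and in the product, is tracking the interplay between the left and right regular representations, which is exactly what makes the twisted identification $\rho\mapsto\pi_G$ work.
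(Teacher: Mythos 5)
There is a genuine gap in your verification that $[\pi(f),T]$ is compact, and it lies in the treatment of the correction term $R$. You claim that $R=\sum_g\bigl(\pi(g(c))\pi(f)[\pi(g(c)),T]+[\pi(g(c)),T]\pi(f)\pi(g(c))\bigr)$ is a norm-convergent sum of compacts because $\|[\pi(g(c)),T]\|\to 0$ and the translates $g(c)$ have finite overlap. This principle is false: in each of these terms the translated cutoff sits on only \emph{one} side, and a one-sided sum $\sum_g \pi(g(c))S_g$ with $S_g$ compact and $\|S_g\|\to 0$ need not converge in norm, nor even define a bounded operator (take $S_g=\epsilon_g\,|\delta_g\rangle\langle\eta_0|$ on an $\ell^2(G)$-type space with $\sum_g\epsilon_g^2=\infty$). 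The localization lemma that handles such sums (Lemma 2.5, 2.6 of \cite{nish:dsplitb}, which is what the paper invokes) requires translated cutoffs on \emph{both} sides, i.e. sums $\sum_g \pi(g(c))T_g\pi(g(c))$ with $T_g$ compact and $\|T_g\|\to0$. In fact your $R$ equals $[\pi(f),T-T']$ where $T'=\sum_g\pi(g(c))T\pi(g(c))$, and its compactness is \emph{exactly} what condition (2.2) of property $(\gamma)$ supplies, since (2.2) says $T-T'\in K(H)$. Note that your cycle verification never invokes (2.2) at all; this is the structural sign that something is being assumed for free, because (2.1) controls only commutators with $C_0$-functions and cannot by itself relate $T$ to its localization $T'$. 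The paper performs the reduction in the opposite (and correct) order: first use (2.2) to replace $[\pi(f),T]$ by $[\pi(f),T']$ modulo compacts, then compute \emph{exactly} $[T',\pi(f)]=\sum_g \pi(g(c))\,[T,\pi(g(\chi f))]\,\pi(g(c))$, where $\chi\in C_c(\E)$ satisfies $\chi c=c$ and $G$-invariance of $f$ is used; this is a two-sided sum whose middles are compact with vanishing norm by (2.1), so the cited lemma applies. Your $\Phi(S)$ is precisely this $[T',\pi(f)]$, and its compactness should be justified in that two-sided form; your one-sided cancellation route for $\Phi(S)$ suffers from the same convergence problem.

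The remainder of your proposal is in order: the algebra of the decomposition $S=\Phi(S)+R$ is correct, the facts about $\pi_G\otimes\pi$ (extension of $\pi_G$ to $C^*_r(G)$ via the cutoff isometry, commutation with $\pi(C(BG))$, compactness of $[\pi_G(g),T]$ from almost-equivariance) match the paper, and your Morita-equivalence computation for $[\pi_G\otimes \pi, H, T]=\underline{d}$ --- where the $U_H(h^{-1})$-twisted unitary converts the right regular representation into $\pi_G$ and transports $(g(T))_{g\in G}$ to $T'$, after which (2.2) finishes --- is a reasonable outline of the check that the paper explicitly leaves to the reader (it mirrors the isomorphism used in the proof of Proposition \ref{prop:gammaG}). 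So the proof is repairable, but only by inserting condition (2.2) where you currently rely on an invalid convergence principle: handle $R$ by (2.2) directly, and rewrite $\Phi(S)$ in two-sided form before invoking the localization lemma.
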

\begin{proof} We need to show that for any $G$-invariant continuous function $\phi$ on $\E$, the commutator $[T, \phi]$ is compact. By the condition (2.2) for property $(\gamma)$, we just need show that $[T', \phi]$ is compact where $T'=\sum_{g\in G} g(c)Tg(c)$; $c$ is a cutoff function on $\E$. Take any compactly supported function $\chi$ on $\E$ so that $c\chi=c$. 

We have
\[
[T', \phi]=\sum_{g\in G} g(c)[T, g(\chi\phi)]g(c) = \sum_{g\in G} g(c)T_gg(c)
\]
where $T_g=[T, g(\chi\phi)]$ are compact operators whose norm vanish as $g$ goes to infinity by the condition (2.1) for property $(\gamma)$. It follows that $[T', \phi]=\sum_{g\in G} g(c)T_gg(c)$ is compact (see Lemma 2.5, 2.6 of \cite{nish:dsplitb}). 

We leave to the reader the straightforward check that the element $[H, \pi_G\otimes \pi, T]$ in $\KKK(C^\ast_r(G)\otimes C(BG), \C)$ corresponds to $d$ in $\KKK(C^\ast_r(G)\otimes C_0(\E)\rtimes G, \C)$ by the Morita equivalence between $C(BG)$ and $C_0(EG)\rtimes G$.
\end{proof}

We set 
\[
\underline\Lambda_{C^\ast_r(G)}=[\mathrm{MF}]\, \hot_{C(BG)}\, \underline d,\qquad \underline\Lambda_{C(BG)}=[\mathrm{MF}]\,\hot_{C^\ast_r(G)}\, \underline d.
\]

The following conclusions are immediate from the discussion above.
\begin{theorem} 
Let $G$ be a torsion-free group and suppose a $(\gamma)$-element $x\in \KK{G}{\C}{\C}$ exists. We have 
\[
\underline\Lambda_{C^\ast_r(G)}=\jmath^G_r(x),\qquad
\underline\Lambda_{C(BG)}=1_{C(BG)}.
\] 
For example, this is the case when $BG$ is a compact smooth manifold of non-postive sectional curvature.
\end{theorem}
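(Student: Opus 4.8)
The plan is to deduce both identities from the crossed-product computations already carried out in Proposition \ref{prop:gammaG} and Proposition \ref{prop:czeroone}, transporting them to the $C(BG)$-picture along the strong Morita equivalence between $C_0(EG)\rtimes G$ and $C(BG)$. The two inputs I need are in place. First, the proposition identifying the Mi\v{s}\v{c}enko class with the canonical unit, $\delta_G=[\mathrm{MF}]\,\hot_{C^\ast_r(G)\hot C(BG)}\,\tau_{C^\ast_r(G)}([Y^*])$, exhibits $[\mathrm{MF}]$ as the image of $\delta_G$ under the change of variable that replaces the $C_0(\E)\rtimes G$-leg in the \emph{target} of $\delta_G$ by a $C(BG)$-leg. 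Second, the definition $\underline d=[Y]\,\hot_{C_0(\E)\rtimes G}\,d$ exhibits $\underline d$ as the image of $d$ under the \emph{inverse} change of variable, applied to the $C_0(\E)\rtimes G$-leg in the \emph{source} of $d$. Since $[Y]$ and $[Y^*]$ are mutually inverse invertible classes (the Morita equivalence after descent), everything reduces to tracking which leg is contracted.

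For the first identity I would substitute these descriptions and use associativity of the Kasparov product:
\[
\underline\Lambda_{C^\ast_r(G)}=[\mathrm{MF}]\,\hot_{C(BG)}\,\underline d=[\mathrm{MF}]\,\hot_{C(BG)}\,\big([Y]\,\hot_{C_0(\E)\rtimes G}\,d\big).
\]
Here the contracted leg is precisely the one carrying the Morita equivalence, so the two mutually inverse Morita classes become adjacent and collapse against each other to $1_{C_0(\E)\rtimes G}$, leaving
\[
\underline\Lambda_{C^\ast_r(G)}=\delta_G\,\hot_{C_0(\E)\rtimes G}\,d=\Lambda_{C^\ast_r(G)}.
\]
By Proposition \ref{prop:gammaG} the right-hand side equals $\jmath^G_r(x)$, which is the first claim. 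Note that the $C^\ast_r(G)$-leg is untouched throughout, which is exactly why the Morita equivalence disappears.

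The second identity is the dual bookkeeping. This time the contracted leg is $C^\ast_r(G)$, so the Morita classes on the $C(BG)$/$C_0(\E)\rtimes G$-leg are \emph{not} consumed by the contraction; they survive on the outside and merely conjugate the crossed-product answer. Concretely, $\underline\Lambda_{C(BG)}=[\mathrm{MF}]\,\hot_{C^\ast_r(G)}\,\underline d$ is the image of $\Lambda_{C_0(\E)\rtimes G}=\delta_G\,\hot_{C^\ast_r(G)}\,d$ under the ring isomorphism $\KKK(C_0(\E)\rtimes G,C_0(\E)\rtimes G)\cong\KKK(C(BG),C(BG))$ induced by the Morita equivalence. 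Since Proposition \ref{prop:czeroone} gives $\Lambda_{C_0(\E)\rtimes G}=1_{C_0(\E)\rtimes G}$ and such an isomorphism carries units to units, I conclude $\underline\Lambda_{C(BG)}=1_{C(BG)}$. As a consistency check one may instead feed the explicit cycle $(H,\pi_G\otimes\pi,T)$ for $\underline d$ from the previous proposition into the pairing with $[\mathrm{MF}]$ and verify directly that the resulting endomorphism of $C(BG)$ is homotopic to the identity. The only genuinely delicate point in the argument is the rearrangement of the two nested products: one must check that the partial product over $C^\ast_r(G)$ commutes with the partial products over the $C_0(\E)\rtimes G$-leg, which is legitimate because the two legs are independent tensor factors and the interchange is the standard associativity and commutativity of exterior and interior Kasparov products. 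Finally, the closing sentence follows because a compact non-positively curved model for $BG$ places $G$ acting properly on the $\mathrm{CAT}(0)$ space $\E=EG$, whence Kasparov's construction produces a $\gamma$-element and in particular a $(\gamma)$-element.
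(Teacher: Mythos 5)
Your proof is correct and takes essentially the same route as the paper: the paper gives no separate argument, declaring these identities ``immediate from the discussion above,'' which is precisely the transport of Propositions \ref{prop:gammaG} and \ref{prop:czeroone} through the descended Morita equivalence between $C_0(EG)\rtimes G$ and $C(BG)$ that you carry out. Your bookkeeping of which Morita class cancels (contraction over the $C(BG)$-leg, yielding $\underline\Lambda_{C^\ast_r(G)}=\Lambda_{C^\ast_r(G)}=\jmath^G_r(x)$) versus which survives as a unit-preserving conjugation (contraction over the $C^\ast_r(G)$-leg, yielding $\underline\Lambda_{C(BG)}=1_{C(BG)}$) is exactly the intended content, and is if anything stated more carefully than the paper's own notation for $[Y]$ and $[Y^*]$.
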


\section{Examples}\label{sec:ex}

In this section we give a few examples and computations to put into context the abstract duality results that have been explained previously. We primarily treat the case of strong Spanier--Whitehead duality, and briefly discuss the weak case as it is mostly covered by other results in the literature (see for example \cite[Example 2.14 \& 2.17]{bvrs:string}).

\subsection{Groups with Spanier--Whitehead \texorpdfstring{$K$}{K}-duality} Let $G$ be a countable discrete group which satisfies the following two conditions (1), (2) or (1), (3):
\begin{enumerate}
\item[(1)] $G$ admits a $G$-compact model of $\E$;
\item[(2)] $G$ admits a $\gamma$-element $\gamma$ such that $\jmath^G_r(\gamma)=1_{C^\ast_r(G)}$, or 
\item[(3)] $G$ admits a $(\gamma)$-element $x$ such that $\jmath^G_r(x)=1_{C^\ast_r(G)}$.
\end{enumerate}
We recall that the gamma element $\gamma$, if exists, is represented by a cycle with property $(\gamma)$. Therefore, the condition (2) implies (3). Our previous argument shows that such a group $G$ has Spanier--Whitehead $K$-duality. Thanks to the Higson--Kasparov Theorem \cite{higkas:bc}, we obtain the following:
\begin{theorem} All a-T-menable groups which admit a $G$-compact model of $\E$ have Spanier--Whitehead $K$-duality.
\end{theorem}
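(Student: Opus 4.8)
The plan is to show that every a-T-menable group satisfying the standing hypothesis meets condition~(2) in the list above, so that the conclusion follows from the preceding argument (Theorem~\ref{thm:swdm}). Since the statement already grants a $G$-compact model of $\E$, i.e.\ condition~(1), all that remains is to produce a $\gamma$-element $\gamma$ with $\jmath^G_r(\gamma)=1_{C^*_r(G)}$.

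First I would invoke the Higson--Kasparov theorem \cite{higkas:bc}. For an a-T-menable group the metrically proper affine-isometric action on a Hilbert space yields a proper $G$-$C^*$-algebra $P$, a Dirac element $\alpha\in\KK{G}{P}{\C}$, and a dual-Dirac element $\beta\in\KK{G}{\C}{P}$ whose product is not merely an idempotent but the unit: one has $\beta\,\hot_P\,\alpha=1_\C$ in $\KK{G}{\C}{\C}$. In other words the $\gamma$-element exists and equals $1_\C$ (equivalently $P_\C\cong\C$ in $\KKK^G$, and the Baum--Connes conjecture holds with arbitrary coefficients).

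The remaining step is formal. Reduced descent $\jmath^G_r$ is a functor $\KKK^G\to\KKK$ sending $A\mapsto A\rtimes_r G$, hence it preserves identity morphisms; applying it to $\gamma=1_\C$ and using $\C\rtimes_r G=C^*_r(G)$ gives
\[
\jmath^G_r(\gamma)=\jmath^G_r(1_\C)=1_{C^*_r(G)}\in\KKK(C^*_r(G), C^*_r(G)).
\]
Thus condition~(2) is satisfied, and Theorem~\ref{thm:swdm} yields Spanier--Whitehead $K$-duality for $G$.

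There is no genuine obstacle once Higson--Kasparov is taken as a black box; the entire content is the deep analytic input that a-T-menability forces $\gamma=1_\C$. The only point deserving care is that it is the \emph{equivariant} identity $\gamma=1_\C$ that descends to $1_{C^*_r(G)}$, which is precisely what upgrades the conclusion from the weak form (an isomorphism of $K$-groups, equivalent to Baum--Connes) to the strong form of duality demanded by Definition~\ref{def:swd}. One could alternatively route the argument through condition~(3) and Theorem~\ref{thm:thmC}, representing $\gamma$ by a cycle with property~$(\gamma)$, but the $\gamma$-element path above is the most direct.
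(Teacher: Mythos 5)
Your proposal is correct and follows essentially the same route as the paper: the paper's (very terse) argument is precisely that the Higson--Kasparov theorem gives $\gamma=1_\C$ in $\KK{G}{\C}{\C}$ for a-T-menable groups, whence $\jmath^G_r(\gamma)=1_{C^*_r(G)}$ by functoriality of descent, so condition (2) holds and Theorem \ref{thm:swdm} applies. Your added remark that it is the equivariant identity $\gamma=1_\C$ (not merely Baum--Connes) that yields the strong form of duality is exactly the point the paper relies on.
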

Examples of such a-T-menable groups are the following:
\begin{itemize}
\item All groups which act properly, affine-isometrically and co-compactly on a finite-dimensional Euclidean space.
\item All co-compact lattices of simple Lie groups $\mathrm{SO}(n, 1)$ or $\mathrm{SU}(n,1)$.
\item All groups which act co-compactly on a tree (or more generally on a CAT(0)-cube complex).
\end{itemize}
For any a-T-menable group $G$ listed above, the gamma element $\gamma$ can be represented by an explicit cycle with property $(\gamma)$. Below, we describe an explicit cycle with property $(\gamma)$ for these groups. As a consequence, we can obtain an explicit cycle $d$ in $\KKK(C^\ast_r(G)\otimes C_0(\E)\rtimes G, \mathbb{C})$ which together with $\delta$, induces the duality between $C^\ast_r(G)$ and $C_0(\E)\rtimes G$. 

To begin with, we recall from \cite{kas:descent, valette:bc} that the gamma element exists for any group $G$ which acts properly, isometrically on a simply connected, complete Riemannian manifold $M$ of non-positive sectional curvature which is bounded from below. In this case, the gamma element for $G$ is represented by an unbounded $G$-equivariant Kasparov cycle 
\[
(H_M, D_M)
\] 
where $H_M$ is the Hilbert space $L^2(M, \Lambda^\ast T^\ast_{\mathbb{C}}M)$ of $L^2$-sections of the complexified exterior algebra bundles on $M$ and where $D_M$ is the self-adjoint operator 
\[
D_M=d_f+d_f^\ast
\]
on $M$ given by the following Witten type perturbation
\[
d_f= d+df\wedge
\]
of the exterior derivative $d$; the function $f$ is the squared distance $d_{M}^2(x_0, x)$ on $M$ for some fixed point $x_0$ of $M$. Let
\[
F_M= \frac{D_M}{(1+D_M^2)^{\frac12}}
\]
be the bounded transform of $D_M$. The element $[H_M, F_M]$ in $\KKK^G(\mathbb{C}, \mathbb{C})$ is the gamma element for $G$. We now suppose furthermore that $G$ action on $M$ is co-compact. In this case, $G$ admits a $G$-compact model of $\E$, namely the manifold $M$. 
\begin{proposition} The cycle $(H_M, F_M)$ has property $(\gamma)$.
\end{proposition}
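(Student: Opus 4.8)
The plan is to verify the two defining conditions of property $(\gamma)$ for the cycle $(H_M, F_M)$, taking as the representation of $C_0(\E)=C_0(M)$ the pointwise multiplication action on the $L^2$-sections in $H_M$; this action is manifestly non-degenerate and $G$-equivariant, so it is a legitimate candidate for the representation $\pi$ demanded in condition (2). Condition (1) then requires no new work: the preceding discussion records that $[H_M, F_M]$ equals the gamma element $\gamma$, and by Definition \ref{def:gamma}(1) the gamma element restricts to $1_\C$ on every finite subgroup, so $\Res_G^F([H_M, F_M])=1_\C$ for all finite $F\subseteq G$. (Concretely, a finite subgroup $F$ fixes a point of the complete, simply connected, non-positively curved manifold $M$ by the Cartan fixed-point theorem; choosing $x_0$ to be such a fixed point makes $f$ $F$-invariant and exhibits the required $F$-equivariant homotopy to the trivial cycle.)

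All the remaining content is analytic and is governed by a single geometric estimate, which I would establish first. The Witten-deformed operator satisfies a Bochner/Weitzenb\"ock identity of the shape $D_M^2=\Delta+|df|^2+R_f$, where $\Delta$ is a Laplace-type operator, $R_f$ is the zeroth-order term built from $\mathrm{Hess}(f)$ and the curvature, and $|df|^2=4\,d_M^2(x_0,\cdot)$ since $|\nabla d_M|=1$. Non-positive curvature makes $f=d_M^2(x_0,\cdot)$ convex, so $\mathrm{Hess}(f)$ is bounded below, and together with the lower curvature bound this yields $D_M^2\ge 4\,d_M^2(x_0,\cdot)-C$ for some constant $C$. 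Hence $D_M^2\to\infty$ at infinity, $(1+D_M^2)^{-1}$ is compact (so $(H_M,F_M)$ is a genuine, not merely local, cycle), and --- this is the key point --- far from $x_0$ the operator $F_M$ is asymptotically the local Clifford multiplication by $df/|df|$, so multiplication operators supported in that region commute with $F_M$ up to a small norm error.

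Given this, condition (2.1) would follow as usual. For $\phi\in C_c(M)$, writing $F_M=\frac{1}{\pi}\int_0^\infty D_M(1+\lambda+D_M^2)^{-1}\lambda^{-1/2}\,d\lambda$ and using that $[\phi,D_M]$ is the (bounded) Clifford multiplication by $d\phi$, together with the resolvent identity, shows $[\phi,F_M]$ is compact. Replacing $\phi$ by a $G$-translate $g\cdot\phi$ pushes $\supp(\phi)$ into the region where $D_M^2$ is large, where the resolvent norms, and hence $\|[g\cdot\phi,F_M]\|$, decay; this gives vanishing of $g\mapsto[g\cdot\phi,F_M]$ at infinity in $G$, and a density argument extends (2.1) from $C_c(M)$ to all of $C_0(M)$. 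For condition (2.2) I would use $\sum_{g}(g\cdot c)^2=1$ to obtain the algebraic identity $F_M-\sum_{g}(g\cdot c)F_M(g\cdot c)=\sum_{g}(g\cdot c)\,[(g\cdot c),F_M]$. Each summand is compact by the previous step (applied with $\phi=g\cdot c\in C_c(M)$), properness of the action makes the sum locally finite, and the decay of $\|[(g\cdot c),F_M]\|$ as $g\to\infty$ makes it norm-convergent; therefore the total is compact.

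The substance of the argument, and the step I expect to be the main obstacle, is precisely the uniform decay and compactness of commutators of $F_M$ with translated compactly supported functions, which rests on the confining behaviour of $|df|^2$ forced by non-positive curvature. These are exactly the estimates underpinning Kasparov's dual-Dirac construction, so in a complete write-up they can be quoted from \cite{kas:descent,valette:bc} (cf. also Lemmas 2.5 and 2.6 of \cite{nish:dsplitb} invoked elsewhere in this section), and the verification of (2.1) and (2.2) then reduces to the two short manipulations indicated above.
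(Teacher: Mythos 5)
Your overall skeleton matches the paper's: condition (1) is discharged by the fact that $[H_M,F_M]$ is the gamma element, the representation of $C_0(M)$ is pointwise multiplication, and the basic computation is that commutators of $D_M$ with compactly supported smooth functions are Clifford multiplications by gradients, hence bounded and compactly supported. The difference is what happens next. The paper feeds exactly this unbounded, \emph{exactly local} commutator data (for the dense subalgebra $B=C_c^\infty(M)$, which contains a cutoff function) into Theorem 6.1 of \cite{nish:dsplitb}, which is the purpose-built device converting such data into conditions (2.1) and (2.2) for the bounded transform. You instead try to verify (2.1) and (2.2) directly at the level of $F_M$, and it is precisely there that your two key steps have genuine gaps.

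The serious one is in (2.2). From $\sum_g (g\cdot c)^2=1$ you correctly get $F_M-\sum_g (g\cdot c)F_M(g\cdot c)=\sum_g (g\cdot c)\,[(g\cdot c),F_M]$, but the claim that local finiteness plus $\norm{[(g\cdot c),F_M]}\to 0$ makes this series norm-convergent is a non sequitur: vanishing of the individual norms does not give summability of an infinite operator series. The compactness lemma you point to (Lemmas 2.5 and 2.6 of \cite{nish:dsplitb}, used elsewhere in this section of the paper) applies to \emph{sandwiched} sums $\sum_g (g\cdot c)\,T_g\,(g\cdot c)$ with $\norm{T_g}\to 0$, where the estimate $\norm{\sum_{g\in S}(g\cdot c)T_g(g\cdot c)}\le \sup_{g\in S}\norm{T_g}$ follows from $\sum_g (g\cdot c)^2=1$ and Cauchy--Schwarz on both sides. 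Your sum is one-sided, $\sum_g (g\cdot c)T_g$, and for it the same trick only yields $\bigl|\langle \eta, \sum_g (g\cdot c)T_g\xi\rangle\bigr|\le \norm{\eta}\bigl(\sum_g\norm{T_g\xi}^2\bigr)^{1/2}$, so you would need a square-function estimate $\sum_{g\in S}\norm{[(g\cdot c),F_M]\xi}^2\le \epsilon_S\norm{\xi}^2$, which does not follow from (2.1) and is not supplied by your references in that form; establishing it (via approximate locality of $F_M$) is essentially the content of the theorem the paper quotes. A lesser but real imprecision occurs in (2.1): you attribute the decay of $\norm{[g\cdot\phi,F_M]}$ to decay of "resolvent norms" where $D_M^2$ is large, but $\norm{(1+\lambda+D_M^2)^{-1}}$ does not depend on $g$ at all; what is needed is smallness of the resolvent composed with multiplication by far-supported functions, an Agmon/IMS-type localization estimate that does not follow from the operator inequality $D_M^2\ge 4d_M^2(x_0,\cdot)-C$ alone (your heuristic that $F_M$ is asymptotically Clifford multiplication by $df/\abs{df}$ is the correct mechanism, but the resolvent manipulation you sketch does not deliver it). In short: your reduction to the cited estimates is not yet valid, and the cleanest repair is the paper's own move --- stay in the unbounded picture, where $[D_M,g(h)]=g(c(h))$ is exactly supported in $g(\supp h)$, and invoke Theorem 6.1 of \cite{nish:dsplitb} rather than reproving it.
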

\begin{proof} Since $[H_M, F_M]$ is the gamma element for $G$, it satisfies the condition (1) of property $(\gamma)$. To show that the condition (2) holds for $[H_M, F_M]$, we shall apply Theorem 6.1 of \cite{nish:dsplitb}. We use the natural non-degenerate representation of $C_0(M)$ on $H_M$ by pointwise multiplication. We take the dense subalgebra $B$ of $C_0(M)$ consisting of compactly supported smooth functions. Note that $B$ contains a cutoff function of $M$. For any function $h$ in $B$, we have
\[
[D_M, g(h)]=[d+d^\ast, g(h)] = g(c(h))  
\] 
where $c(h)$ is the Clifford multiplication by the gradient of $h$ which is bounded and compactly supported. We can now use Theorem 6.1 of \cite{nish:dsplitb} to conclude that the bounded transform $(H_M, F_M)$ satisfies the condition (2) of property $(\gamma)$.
\end{proof} 

\begin{corollary} For all groups $G$ which act properly, affine-isometrically, and co-compactly on a finite-dimensional Euclidean space $\mathbb{R}^n$, the $G$-equivariant cycle $(H_{\mathbb{R}^n}, F_{\mathbb{R}^n})$ has property $(\gamma)$.
\end{corollary}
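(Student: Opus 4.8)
The plan is to recognize this corollary as a direct specialization of the preceding Proposition to the case $M = \R^n$, so that the entire task reduces to verifying that Euclidean space, equipped with its flat metric, satisfies all the geometric hypotheses required there, and that an affine-isometric action is the same thing as a Riemannian isometric action. First I would equip $\R^n$ with the standard Euclidean metric and record that it is a complete, simply connected Riemannian manifold (being contractible), whose sectional curvature vanishes identically; in particular the curvature is non-positive and, being constant, bounded from below. Thus $\R^n$ lies in the class of manifolds $M$ treated in the Proposition, and the associated data specialize accordingly: the Hilbert space $H_{\R^n} = L^2(\R^n, \Lambda^\ast T^\ast_{\C}\R^n)$, the Witten-type operator $D_{\R^n} = d_f + d_f^\ast$ with $f(x) = d_{\R^n}^2(x_0, x) = \norm{x - x_0}^2$, and its bounded transform $F_{\R^n}$.

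The one point genuinely worth spelling out is that the $G$-action, assumed to be proper, affine-isometric, and co-compact, is in fact a proper, isometric, co-compact action in the Riemannian sense demanded by the Proposition. This follows because the isometry group of Euclidean space is exactly the group of affine isometries $O(n) \ltimes \R^n$: every map $x \mapsto Ax + b$ with $A \in O(n)$ preserves the flat metric, and conversely every Riemannian isometry of $\R^n$ is of this affine form. Hence the hypothesis \emph{affine-isometric} in the present statement coincides verbatim with the hypothesis \emph{isometric} used earlier, and properness and co-compactness carry over unchanged. In particular $\R^n$ furnishes a $G$-compact model of $\E$.

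With these identifications in place the Proposition applies with $M = \R^n$, yielding that $(H_{\R^n}, F_{\R^n})$ has property $(\gamma)$. There is no substantive obstacle here; the argument is purely a matter of checking that flat Euclidean space meets the curvature and completeness conditions (which for $\R^n$ are immediate) and that the two notions of isometric action agree. If one wished to make the proof fully self-contained rather than by citation, the only computation to reproduce is the commutator identity $[D_{\R^n}, g(h)] = g(c(h))$ for compactly supported smooth $h$, where $c(h)$ is Clifford multiplication by the gradient of $h$, together with the appeal to \cite[Theorem 6.1]{nish:dsplitb}; but this is exactly the content already carried out for general $M$, and nothing is special to $\R^n$ except the simplicity of its geometry.
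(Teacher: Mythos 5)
Your proposal is correct and matches the paper's (implicit) argument: the paper states this corollary without separate proof precisely because it is the specialization of the preceding proposition to $M=\mathbb{R}^n$, and your verification that flat Euclidean space is complete, simply connected, of non-positive curvature bounded below, and that affine isometries are exactly the Riemannian isometries of flat $\mathbb{R}^n$, is all that is needed.
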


\begin{corollary}\label{cor-Lie-gamma} For all co-compact closed subgroups $G$ of a semi-simple Lie group $L$,  the $G$-equivariant cycle $(H_{L/K}, F_{L/K})$ has property $(\gamma)$, where $K$ is a maximal compact subgroup of $L$.
\end{corollary}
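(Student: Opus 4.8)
The plan is to obtain this as a direct special case of the Proposition above, with the symmetric space $M=L/K$ playing the role of the non-positively curved manifold. First I would recall the relevant structure theory. Equip $L/K$ with an $L$-invariant Riemannian metric, induced for instance by the Killing form restricted to the tangent space at the base point; then $L/K$ becomes a globally symmetric space of non-compact type. The Cartan decomposition $L=K\exp(\mathfrak{p})$ identifies $L/K$ diffeomorphically with the vector space $\mathfrak{p}$, so $M$ is contractible, in particular simply connected, and it is complete because it is homogeneous. The sectional curvature of a symmetric space of non-compact type is non-positive, and since the curvature tensor is parallel its eigenvalues are constant along $M$; hence the sectional curvature takes values in a fixed interval $[-a,0]$ and is in particular bounded from below, exactly as required by the hypotheses of the Proposition.

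Second, I would verify that the restricted $G$-action on $M$ satisfies the three conditions of that Proposition. The action is isometric because $G\subseteq L$ acts through the $L$-action, which preserves the invariant metric. It is proper: the $L$-action on $L/K$ is transitive with point stabilizers the compact conjugates $gKg^{-1}$, so it is proper, and properness is inherited by the closed subgroup $G$. Finally it is co-compact: since $G$ is co-compact in $L$, the quotient $G\backslash(L/K)$ is the image of the compact space $G\backslash L$ under the continuous surjection $G\backslash L\to G\backslash(L/K)$, hence compact. This also exhibits $M=L/K$ as a $G$-compact model for $\E$.

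With these verifications in place, the hypotheses of the Proposition hold verbatim for $M=L/K$, and its conclusion yields that the bounded-transform cycle $(H_{L/K},F_{L/K})$ has property $(\gamma)$. I do not expect a genuine obstacle, since the argument amounts to quoting the structure theory of Riemannian symmetric spaces correctly; the only points meriting a little care are the uniform lower curvature bound, which comes for free from homogeneity (equivalently, from the curvature tensor being parallel), and the properness of the $G$-action, which follows from compactness of $K$ together with closedness of $G$ in $L$. This is the precise analogue, for the symmetric space $L/K$, of the Euclidean case treated in the preceding Corollary.
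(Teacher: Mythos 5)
Your proposal is correct and follows exactly the route the paper intends: the corollary is stated as an immediate consequence of the preceding Proposition, with $M=L/K$ (equipped with an $L$-invariant metric) supplying the simply connected, complete, non-positively curved manifold with curvature bounded below, and the co-compact closed subgroup $G\subseteq L$ acting properly, isometrically and co-compactly. The paper omits these standard verifications of the symmetric-space hypotheses, which you have spelled out correctly.
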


Let us look at a few examples.

\begin{description}
\item[Poincar\'e--Langlands duality] In \cite{npw:langbc} the authors examine the Baum--Connes correspondence for the (extended) affine Weyl group $W_a$ associated to a compact connected semisimple Lie group $G$. This group can be realized as the group of affine isometries of the Lie algebra $\mathfrak{t}$ of a maximal torus $T\subseteq G$. The structure of $W_a$ is that of a semidirect product $\Gamma\rtimes W$, where $\Gamma$ is the lattice of translations in $\mathfrak{t}$, and $W$ is the Weyl group of the root system of~$G$. 

Ultimately, it is shown that the Baum--Connes conjecture (which holds in this case) is equivalent to $T$-duality for the aforementioned torus $T$ and the Pontryagin dual $\hat{\Gamma}$ of the lattice $\Gamma$. From the viewpoint of Lie groups, $\hat{\Gamma}$ equivariantly coincides with the maximal torus $T^{\vee}$ of the Langlands dual $G^{\vee}$ of $G$. In $K$-theory this is expressed by $W$-equivariant Spanier--Whitehead duality between the dual tori $T$ and $T^\vee$, which is referred to as ``Poincar\'e--Langlands duality'' in \cite{npw:langbc}. 

The results from Subsection \ref{subsec:tfree} in this paper can be equivalently applied to get these results, with $C(B\Gamma)$ playing the role of $C(T)$ and $C^*_r(\Gamma)$ playing the role of $C(T^{\vee})$ through the Gelfand transform. 

The $(\gamma)$-element, which belongs to $\KK{W_a}{\C}{\C}$, in this case can be constructed as explained above with $M=\mathfrak{t}$ and distance function induced by a $W$-equivariant metric. Equivalently, the bounded transform of the Bott--Dirac operator
\[
B_{\mathfrak{t}}=\sum_i (\mathrm{ext}(e_i)+ \mathrm{int}(e_1))x_i + (\mathrm{ext}(e_i) - \mathrm{int}(e_i))\frac{d}{dx_i}
\] 
yields a $W$-equivarant cycle with property $(\gamma)$, provided that interior multiplication is defined through a $W$-equivariant metric. The cycle obtained this way is indeed isomorphic to the one obtained through the Witten type perturbation of the de Rham operator, and its $\KKK$-class coincides with the classical $\gamma$-element which is homotopic to the unit \cite{higkas:bc}. 

In summary, we obtain equivariant duality classes $\underline{\delta}^W\in \KK{W}{\C}{C(\mathfrak{t}/\Gamma)\otimes C^*_r(\Gamma)}$, derived from the Mi\v{s}\v{c}enko $W$-bundle associated to the principal $\Gamma$-bundle $\mathfrak{t}\to T$, and $\underline{d}^W\in \KK{W}{C(\mathfrak{t}/\Gamma)\otimes C^*_r(\Gamma)}{\C}$, derived from the $(\gamma)$-element described above. We can prove
\[
\underline{\delta}^W\,\hot_{C(T)}\, \underline{d}^W = \jmath^\Gamma_r(\gamma),
\]
where on the right-hand side we mean ``partial'' descent with respect to the normal subgroup $\Gamma\subseteq W_a$.
As we know $\gamma=1_\C$ in $\KK{\Gamma\times W}{\C}{\C}$, so that we get respectively
\[
\underline{\delta}^W\,\hot_{C(T)}\, \underline{d}^W = 1,\qquad \underline{\delta}^W\,\hot_{C(T^\vee)}\,\underline{d}^W=1
\]
in the equivariant groups $\KK{W}{C(T^\vee)}{C(T^\vee)}, \KK{W}{C(T)}{C(T)}$.

\item[Lattices in $\mathrm{SO}(n, 1)$ and $\mathrm{SU}(n,1)$] Let $G$ be a co-compact lattice of a simple Lie group $L=\mathrm{SO}(n, 1)$, or $L=\mathrm{SU}(n,1)$. Let $K$ be a maximal compact subgroup of $L$.  Corollary \ref{cor-Lie-gamma} shows that the $G$-equivariant cycle $(H_{L/K}, F_{L/K})$ has property $(\gamma)$. The corresponding element $x=[H_{L/K}, F_{L/K}]$ is nothing but the gamma element $\gamma$ for $G$ which is shown to be equal to $1_G$ \cite{higkas:bc, kasjulg:su}.

\item[Groups acting on trees] Let $G$ be a countable discrete group which acts properly and co-compactly on a locally finite tree $Y$. The tree $Y$ is the union of the sets $Y^0$, $Y^1$ of the vertices and edges of the tree. Without loss of generality, we assume a $G$-invariant typing on the tree. Namely, we assume a $G$-invariant decomposition $Y^0=Y^0_0\sqcup Y^0_1$ so that any two adjacent vertices have distinct types. This can be achieved by the barycentric subdivision of the tree. We take $E$ as the geometric realization of the tree. This is a $G$-compact model of the universal proper $G$-space. We denote by $d$, the edge path metric on $E$ and hence on $Y^0$ so that  each edge has length $1$.

The $\ell^2$ space $\ell^2(Y)$ is naturally a graded $G$-Hilbert space with the even and odd spaces being $\ell^2(Y^0)$, $\ell^2(Y^1)$ respectively. Let $H_{\mathbb{R}}$ be the graded Hilbert space  $L^2(\mathbb{R}, \Lambda_{\mathbb{C}}^\ast(\mathbb{R}))$ as before, but now with the trivial $G$-action. We construct a Kasparov cycle with the property $(\gamma)$ on the graded tensor product
\[
H_Y =  H_{\mathbb{R}}\, \hot\, \ell^2(Y).
\]
Following \cite{kasskand:buildnov}, we define a non-degenerate representation $\pi$ of $C_0(E)$ on $H_Y$, which is diagonal with respect to $Y$. This is given by a family $(\pi_y)_{y\in Y}$ of representations of $C_0(E)$ on $H_{\mathbb{R}}$ indexed by $y$ in $Y$. If $y$ is a vertex, we define $\pi_y$ by sending $\phi$ in $C_0(E)$ to the multiplication on $H_\mathbb{R}$ by constant $\phi(y)$. If $y$ is an edge with vertices $y_0$, $y_1$ of corresponding types, we identify $y$ with the interval $[-\frac12, \frac12]$ via the unique isometry sending $y_j$ to $(-1)^{j}\frac12$.  We define $\pi_y$ by sending $\phi$ in $C_0(E)$ to the multiplication on $H_\mathbb{R}$ by the restriction of $\phi$ to the edge $y$ extended to left and right constantly.

Now, like the operator $D_M$, we shall define an unbounded, odd, self-adjoint operator $D_Y$ with compact resolvent of index $1$, which is almost $G$-equivariant and has nice compatibility with functions in $C_0(E)$. The bounded transform $F_Y$ of $D_Y$ will give us a desired Kasparov cycle $(H_Y, F_Y)$ with property $(\gamma)$. For this, we fix a base point $y_0$ from $Y^0$. The following construction depends on the choice of $y_0$. We have the following decomposition of $H_Y$: 
\[
H_Y= H_{\mathbb{R}}\,\hot\, \mathbb{C}\delta_{y_0} \oplus \bigoplus_{y\in Y^0 \backslash \{y_0\}} \left( H_{\mathbb{R}}\,\hot\,(\mathbb{C}\delta_y\oplus\mathbb{C}\delta_{e_y})    \right)
\]
where for each vertex $y\neq y_0$, $e_y$ is the last edge appearing in the geodesic from $y_0$ to $y$ and where the symbol $\delta_{\ast}$ denotes a delta-function in $\ell^2(Y)$. Our operator $D_Y$ is  block-diagonal with respect to this decomposition. It is given by a family $(D_y)_{y\in Y^0}$ of an unbounded, odd, self-adjoint operators with compact resolvent.

For a vertex $y\in Y^0_j$ of type $j$, let $B_{\mathbb{R}, y}$ be the Bott--Dirac operator on $H_\mathbb{R}$ with ``origin shifted'':
\[
B_{\mathbb{R},y}= (\mathrm{ext}(e_1)+ \mathrm{int}(e_1))(x-n_y) + (\mathrm{ext}(e_1) - \mathrm{int}(e_1))\frac{d}{dx}
\]
where $n_y=(-1)^j(\frac12+d(y, y_0))$. For $y=y_0$, we simply set 
\[
D_{y_0} = B_{\mathbb{R},y_0} \hot\,1 \,\,\,\,\,  \text{on \,\,\,\,\, $H_{\mathbb{R}}\,\hot \,\mathbb{C}\delta_{y_0}$}.
\]
For $y\neq y_0$, we set 
\[
D_y =B_{\mathbb{R},y}\hot\,1 + M_{\chi_y}\hot \left(
\begin{array}{cc}
0 & 1 \\
1 & 0 
\end{array}
\right)
 \,\,\,\,\,  \text{on \,\,\,\,\, $H_{\mathbb{R}}\,\hot\,(\mathbb{C}\delta_y\oplus\mathbb{C}\delta_{e_y})$}
\]
where $M_{\chi_y}$ is the multiplication on $H_{\mathbb{R}}$ by the function $\chi_y$ on $\mathbb{R}$ defined as:
\begin{equation*}
\text{for $y\in Y^{0}_0$,} \quad \chi_y(x) = \left \{ 
\begin{aligned}
&0 \,\,\,\,\, &{ x < \frac12} \\
&(x-\frac12)^2 \,\,\,\,\, &{\frac12 \leq x < 1}\\
&x-\frac34 \,\,\,\,\, &{ 1\leq x < d(y,y_0)}\\
&-(x-n_y)^2+d(y,y_0)-\frac12 \,\,\,\,\,&{ d(y,y_0) \leq x < n_y}\\
&d(y,y_0)-\frac12 \,\,\,\,\, &{ n_y\leq x },
\end{aligned}
\right.
\end{equation*}
\begin{equation*}
\text{for $y\in Y^{0}_1$,} \,\, \chi_y(x) = \left \{ 
\begin{aligned}
&d(y,y_0)-\frac12 \,\,\,\,\, &{ x< n_y\ }\\
&-(x-n_y)^2+d(y,y_0)-\frac12 \,\,\,\,\,&{ n_y\leq x < -d(y,y_0)}\\
&-x-\frac34 \,\,\,\,\, &{ -d(y,y_0) \leq y < -1}\\
&(x+\frac12)^2 \,\,\,\,\, &{-1\leq x < -\frac12}\\
&0 \,\,\,\,\, &{ -\frac12\leq x}. \\
\end{aligned}
\right.
\end{equation*}

Note that for each $y\neq y_0$, $D_y$ is a bounded perturbation of a self-adjoint operator $B_{\mathbb{R},y}\hot\,1$ with compact resolvent of index $0$, hence so is $D_y$. All $D_y$ are hence diagonalizable. Therefore, $D_Y=(D_y)_{y\in Y^0}$ is self-adjoint. In order to see that $D_Y$ has compact resolvent, we compute
\[
D_y^2 = B_{\mathbb{R},y}^2\hot\,1 + M_{\chi_y}^2\hot\,1 +  \left(
\begin{array}{cc}
0 & -M_{\chi_y'} \\
M_{\chi_y'} & 0 
\end{array}
\right)
\hot \left(
\begin{array}{cc}
0 & 1 \\
1 & 0 
\end{array}
\right)
\] 
where $\chi_y'$ is the derivative of $\chi_y$. We see that $D_y^2$ has spectrum far away from $0$ as $y$ goes to infinity essentially because the derivatives $\chi_y'$ are uniformly bounded in $y$ and because we have
\[ 
(x-n_y)^2 + \chi_y^2  \geq  2\Bigl(\frac{d(y, y_0)}{2}-\frac18\Bigr)^2 
\]
everywhere. It follows $D_Y$ has indeed, compact resolvent. Let $F_Y$ be the bounded transform
\[
F_Y= \frac {D_Y} {(1+D_Y^2)^{\frac12}}.
\]

\begin{proposition} \label{prop_JulgVal} A pair $(H_Y, F_Y)$ is a $G$-equivariant Kasparov cycle with property $(\gamma)$.
\end{proposition}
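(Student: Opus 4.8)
The plan is to verify the three requirements in turn: that the pair defines a class in $\KKK^G(\mathbb{C},\mathbb{C})$, condition (1) of property $(\gamma)$, and condition (2). The principal analytic tool is Theorem 6.1 of \cite{nish:dsplitb}, which (exactly as in the manifold case treated above for $(H_M,F_M)$) reduces the verification of (2.1) and (2.2) for the bounded transform $F_Y$ to estimates on commutators of the \emph{unbounded} operator $D_Y$ with the representation $\pi$. Since we have already shown that $D_Y$ has compact resolvent, the operator $F_Y$ satisfies $F_Y^2-1\in K(H_Y)$, so the only remaining point for the cycle to be well defined is its $G$-equivariance.

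First I would address the (approximate) $G$-equivariance. The representation $\pi$ is $G$-equivariant by construction, using that the typing---and hence the isometric identification of each edge with $[-\tfrac12,\tfrac12]$---is $G$-invariant. The operator $D_Y$, on the other hand, depends on the base point $y_0$ through the shifts $n_y$ and the functions $\chi_y$, so it is not strictly equivariant. The key estimate is that $g(D_Y)-D_Y$ is a \emph{bounded} operator for each $g\in G$: since $|d(gy,y_0)-d(y,y_0)|\le d(gy_0,y_0)$, the Bott operators $B_{\mathbb{R},gy}$ and $B_{\mathbb{R},y}$ differ by a bounded multiplication, and the derivatives $\chi_y'$ are uniformly bounded in $y$. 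Combined with the compact resolvent of $D_Y$ and functional calculus, this yields $g(F_Y)-F_Y\in K(H_Y)$, so $(H_Y,F_Y)$ is a genuine $G$-equivariant Kasparov cycle.

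Next, condition (1). After the barycentric subdivision already assumed, any finite subgroup $F\subseteq G$ fixes a vertex of $Y$ (a finite group acting on a tree has a fixed point, and subdivision removes edge inversions). Choosing $y_0$ to be this fixed vertex, the data $n_y$, $e_y$ and $\chi_y$ become $F$-invariant, so $D_Y$---and therefore $F_Y$---is genuinely $F$-equivariant. I would then identify the restricted class: the block $H_{\mathbb{R}}\hot\mathbb{C}\delta_{y_0}$ carries $B_{\mathbb{R},y_0}$, whose class is the Bott element $1_{\mathbb{C}}$, while for $y\neq y_0$ each $D_y$ is a bounded perturbation of an operator of index $0$ and the corresponding blocks are permuted by $F$ in orbits away from $y_0$. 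An $F$-equivariant homotopy collapsing the off-base blocks then gives $\Res_G^F([H_Y,F_Y])=1_{\mathbb{C}}\in\KK{F}{\mathbb{C}}{\mathbb{C}}$, in agreement with the computation of the $\gamma$-element for groups acting on trees in \cite{kasskand:buildnov,higkas:bc}.

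Finally, condition (2). Working with the dense subalgebra of compactly supported functions that are smooth along edges, I would compute $[D_Y,\pi(f)]$ block by block: since $\pi$ is diagonal with respect to $Y$, on each block the commutator is a Clifford multiplication by the relevant derivative together with an off-diagonal term coming from $M_{\chi_y}$, all of which are bounded. Feeding these bounds into Theorem 6.1 of \cite{nish:dsplitb} produces the compactness required in (2.1) and (2.2). The hard part will be the \emph{decay in $g$} demanded by (2.1), namely that $g\mapsto[g\cdot f,F_Y]$ lies in $C_0(G,K(H_Y))$: because $g\cdot f$ is supported near $g\cdot\supp f$ while the correction functions $\chi_y$ grow with $d(y,y_0)$, one must track carefully how the localized commutators behave as $g\to\infty$, using properness of the action and the uniform bounds on $\chi_y'$ to conclude vanishing at infinity. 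Condition (2.2) then follows from the analogous compression estimate, exactly as in Lemmas 2.5 and 2.6 of \cite{nish:dsplitb}.
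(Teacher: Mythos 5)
Your treatment of equivariance and of condition (1) is essentially the paper's: it too verifies that $D_Y-g(D_Y)$ is bounded (and leaves it to the reader), reduces condition (1) to an $F$-fixed base point, and identifies the restricted class via the graded index, which is the trivial one-dimensional $F$-representation spanned by $\xi_0\hot\delta_{y_0}$, $\xi_0=e^{-x^2/2}$. Two caveats there: the cycle in the proposition is built from a base point fixed in advance, so before you may ``choose $y_0$ to be the $F$-fixed vertex'' you must note, as the paper does, that the class $[H_Y,F_Y]$ is independent of the choice of $y_0$; and your ``$F$-equivariant homotopy collapsing the off-base blocks'' should be replaced by an actual equivariant index computation, since vanishing of the non-equivariant index block-by-block does not by itself determine the $F$-equivariant class.

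The genuine gap is in condition (2). You assert that on each block the commutator $[D_Y,\pi(f)]$ contains ``an off-diagonal term coming from $M_{\chi_y}$,'' and you then defer the decay in $g$ required by (2.1) as ``the hard part,'' to be handled by properness and the uniform bounds on $\chi_y'$. In fact the crux of the proof --- and the very reason the functions $\chi_y$ are defined with those specific vanishing regions --- is that $M_{\chi_y}$ tensored with the off-diagonal flip commutes \emph{exactly} with $\pi$: for a type-$0$ vertex $y$ the function $\chi_y$ is supported in $\{x\geq\tfrac12\}$, where the constant extension $\pi_{e_y}(\phi)$ of $\phi$ beyond the edge agrees with the constant $\phi(y)=\pi_y(\phi)$ (similarly for type $1$ on $\{x\leq-\tfrac12\}$), so this term vanishes identically. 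Consequently $[D_Y,\pi(\phi)]=\pi(\phi')\cdot u$ with $u$ bounded, a finite sum over the edges meeting $\supp\phi$ --- the same localized ``Clifford multiplication by a compactly supported derivative'' form as in the manifold case, which is exactly the hypothesis of Theorem 6.1 of the direct-splitting paper, and that theorem then yields (2.1) and (2.2) with no separate decay argument. If the $M_{\chi_y}$-terms really did appear, your plan would not close: $\norm{\chi_y}_\infty$ grows like $d(y,y_0)$, so the commutators $[D_Y,\pi(g\cdot f)]$ would not be uniformly controlled as $g\to\infty$, and neither the uniform bounds on $\chi_y'$ nor properness of the action would give the vanishing at infinity demanded by (2.1). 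The missing idea is precisely this exact commutation; with it the ``hard part'' disappears, and without it your argument stalls exactly where you flagged it.
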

\begin{proof}
 Almost $G$-equivariance follows from
\[
D_Y - g(D_Y) = \text{bounded} \,\,\, \text{for $g \in G$}
\]
which we leave to the reader. To see that $[H_Y, F_Y]=1_F$ in $R(F)$ for any finite subgroup $F$ of $G$, we note that the class $[H_Y, F_Y]$ does not depend on the choice of the base point $y_0$. Hence, we may assume that $y_0$ is a vertex fixed by the group $F$. In this case, it is not hard to see that $F_Y$ is odd, $F$-equivariant, self-adjoint operator whose graded index is the one-dimensional trivial representation of $F$ spanned by $\xi_0\,\hot\,\delta_{y_0}$ in $H_{\mathbb{R}}\,\hot\, \mathbb{C}\delta_{y_0}$ where $\xi_0=e^{-\frac{x^2}{2}}$. This shows $[H_Y, F_Y]=1_F$. To show that is has the condition (2) of property $(\gamma)$ with respect to the representation $\pi$ of $C_0(E)$, we shall apply Theorem 6.1 of \cite{nish:dsplitb} for the dense subalgebra $B$ of $C_0(E)$ consisting of compactly supported functions which are smooth inside each edge and constant near the vertices. Note that $B$ contains a cutoff function of $E$. First, we can see that for each $y\neq y_0$, the operator $M_{\chi_y}\hot \left(
\begin{array}{cc}
0 & 1 \\
1 & 0 
\end{array}
\right)$ commutes with the representation $\pi$. This is due to the vanishing of $\chi_y$ for $y\in Y^0_0$ (resp. $Y^0_1$) over $x\leq\frac12$ (resp. over  $-\frac12\leq x$).  For $\phi$ in $B$, we compute the commutator $[D_Y, \pi(\phi)]$ as
\[
\begin{aligned}
{[D_Y, \pi(\phi)]}  &=   [B_{\mathbb{R},y_0}\hot\,1, \pi(\phi)] \\
&+  \sum_{y\in Y^{0}\backslash \{y_0\}}[B_{\mathbb{R},y}\hot\,1 + M_{\chi_y}\hot \left(
\begin{array}{cc}
0 & 1 \\
1 & 0 
\end{array} \right), \pi(\phi)]  \\
 & = [B_{\mathbb{R},y_0}\hot\,1, \pi(\phi)] + \sum_{y\in Y^{0}\backslash \{y_0\}}[B_{\mathbb{R},y}\hot\,1,  \pi(\phi)] \\
 &= \sum_{y\in Y^{0}\backslash \{y_0\}}[ \left(
\begin{array}{cc}
0 & -\frac{d}{dx} \\
\frac{d}{dx} & 0 
\end{array}
\right),  \pi_{e_y}(\phi)]\hot\,1 \\
 &=  \pi(\phi') \sum_{y\in Y^{0}\backslash \{y_0\}} \left(
\begin{array}{cc}
0 & -1 \\
1 & 0 
\end{array}
\right)\hot\,1
\end{aligned}
 \]
 where in the last two, each summand is an operator on $H_{\mathbb{R}}\hot\mathbb{C}\delta_{e_y}$ and where $\phi'$ is the derivative of $\phi$. Note that each summation is finite sum since $\phi$ is compactly supported. We can now use Theorem 6.1 of \cite{nish:dsplitb} to conclude that the bounded transform $(H_Y, F_Y)$ satisfies condition (2) in the definition of property $(\gamma)$.
\end{proof}

\begin{remark} The construction can be generalized to define a cycle with property $(\gamma)$ for a group which acts properly and co-compactly on a Euclidean building in a sense of \cite{kasskand:buildnov}. In \cite{brodguehignish:cat}, a different construction is given which provides us a cycle with property $(\gamma)$ for a group which acts properly and co-compactly on a finite-dimensional CAT(0) cube complex.
\end{remark}

\end{description}

\subsection{Groups with weak Spanier--Whitehead \texorpdfstring{$K$}{K}-duality} 
Let $G$ be a countable discrete group satisfying the two conditions (1), (2)' or (1), (3)' below:
\begin{enumerate}
\item[(1)] $G$ admits a $G$-compact model of $\E$;
\item[(2)'] $G$ admits a $\gamma$-element $\gamma$ with $\jmath^G_r(\gamma)$ acting as the identity on $K_\ast(C^\ast_r(G))$, or 
\item[(3)'] $G$ admits a $(\gamma)$-element $x$ with $\jmath^G_r(x)$ acting as the identity on $K_\ast(C^\ast_r(G))$.
\end{enumerate}
Our previous argument shows that such a group $G$ has weak Spanier--Whitehead $K$-duality. For any word-hyperbolic group, the gamma element is shown to exist and the Baum--Connes conjecture has been verified \cite{laff:hyp,kasskand:bolic,yumin:bchyp}. Moreover, any hyperbolic group is known to admit a $G$-compact model of $\E$ \cite{meinschick:hyp}. Hence, we have:
\begin{theorem} All word-hyperbolic groups $G$ have weak Spanier--Whitehead $K$-duality.
\end{theorem}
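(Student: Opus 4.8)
The plan is to show that word-hyperbolic groups fall under the criterion established just above, by verifying conditions (1) and (2)'. Condition (1) is settled by the work of Meintrup and Schick \cite{meinschick:hyp}: every word-hyperbolic group admits a $G$-compact model of $\E$, for instance a Rips complex of sufficiently large parameter. For the $\gamma$-element, I would invoke its existence for hyperbolic groups as part of the resolution of the Baum--Connes conjecture for this class, due to Kasparov--Skandalis \cite{kasskand:bolic}, Lafforgue \cite{laff:hyp}, and Mineyev--Yu \cite{yumin:bchyp}. In particular the assembly map $\mu^G_\C$ is an isomorphism, hence surjective.

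The key step is to extract condition (2)' from this surjectivity. As recalled in Subsection \ref{subsec:ddd}, once $\gamma$ exists the map $\mu^G_A$ is surjective precisely when $\gamma$ acts as the identity on $K_\ast(A\rtimes_rG)$ through the ring homomorphisms \eqref{ringhoms}. Specializing to $A=\C$ and using surjectivity of $\mu^G_\C$, I would conclude that $\gamma$ acts as the identity on $K_\ast(C^\ast_r(G))$; since this action is implemented by Kasparov product with $\jmath^G_r(\gamma)\in \KKK(C^\ast_r(G),C^\ast_r(G))$, condition (2)' follows at once.

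With (1) and (2)' in place, I would finish by appealing to Proposition \ref{prop:main}, which gives $\Lambda'_{C^\ast_r(G)}=\jmath^G_r(\gamma)$ and $\Lambda'_{C_0(\E)\rtimes G}=1_{C_0(\E)\rtimes G}$, and to Lemma \ref{lem:spoin}, which then forces the induced maps of Definition \ref{def:wpoin} to be mutually inverse isomorphisms on $K$-theory. The only genuinely hard input is the deep structural result for hyperbolic groups---existence of $\gamma$ together with the Baum--Connes isomorphism; granting those, the remaining argument is purely formal. This is also exactly why \emph{weak} rather than strong duality is the natural conclusion here: we only control the action of $\jmath^G_r(\gamma)$ at the level of $K_\ast$, and not the stronger equality $\jmath^G_r(\gamma)=1_{C^\ast_r(G)}$ in $\KKK$ that would be required by Theorem \ref{thm:swdm}.
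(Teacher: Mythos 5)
Your proposal is correct and follows essentially the same route as the paper: conditions (1) and (2)' are verified via Meintrup--Schick and the existence of $\gamma$ plus the Baum--Connes isomorphism for hyperbolic groups, and the conclusion is then drawn from the formal machinery of Proposition \ref{prop:main} and Lemma \ref{lem:spoin}. Your closing observation about why only \emph{weak} duality can be expected is also consonant with the paper, which notes that $\jmath^G_r(\gamma)\neq 1_{C^\ast_r(G)}$ for infinite hyperbolic property $(\mathrm{T})$ groups.
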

As an example of hyperbolic groups, we can take $G$ to be a co-compact lattices of the simple Lie group $L=\mathrm{Sp}(n, 1)$. As before, the $\gamma$-element for $G$ has an explicit representative $(H_{L/K}, F_{L/K})$ with property $(\gamma)$. We remark that the gamma element $\gamma=[H_{L/K}, F_{L/K}]$ is well-known to be not homotopic to $1_G$ due to Kazhdan's property $(\mathrm{T})$. Furthermore, Skandalis \cite{skand:knuc} showed that $\jmath^G_r(\gamma)$ is not equal to $1_{C^\ast_r(G)}$. More precisely, what he showed is that $C^\ast_r(G)$ is not $K$-nuclear, which in particular implies that it cannot be $\KKK$-equivalent to any nuclear $C^\ast$-algebra. The same remark that $\jmath^G_r(\gamma)\neq 1_{C^\ast_r(G)}$ applies to any infinite hyperbolic property $(\mathrm{T})$ group \cite[Theorem 5.2]{higgue:groupkth}. In general, when the gamma element $\gamma$ exists, the equality $\jmath^G_r(\gamma)=1_{C^\ast_r(G)}$ implies that $C^\ast_r(G)$ is $\KKK$-equivalent to $P_\C\rtimes G$, which satisfies the UCT (\cite[Proposition 9.5]{nestmeyer:loc}), in particular it is $K$-nuclear. Therefore, if $C^\ast_r(G)$ is not $K$-nuclear, we have $\jmath^G_r(\gamma)\neq 1_{C^\ast_r(G)}$.

\section{Some applications}\label{sec:cons}

In this section we prove a few results by applying the theory of $K$-duality developed in the previous pages. Some of the material presented here has been previously treated in the literature via possibly different methods \cite[Section 3]{dada:semiproj} \cite[Section 5]{meyereme:dualities}, \cite[Section 4.4]{wkp:ruellepoinc}, \cite[Section 7]{rosscho:kunneth}, nevertheless we provide a brief account for completeness, to give a better idea of some applications of our main theorems. %details for Corollaries \ref{cor:corD} and \ref{cor:corE}. 
%The reader may think of $A$ as $C^*_r(G)$ throughout this section. 

We say a $C^*$-algebra $A$ is \emph{$\KKK$-compact} if the functor sending $D$ to $\KK[*]{}{A}{D}$ commutes with {filtered} colimits. If $A$ is a $C^*$-algebra with a Spanier--Whitehead $K$-dual $B$, then $A$ is $\KKK$-compact because $\KK[*]{}{A}{D}$ is naturally isomorphic to $\KK[*]{}{\C}{D\otimes B}$ and the $K$-theory functor is continuous. 

As explained after Theorem 6.6 of \cite{nestmeyer:loc}, a $C^*$-algebra satisfies the \emph{Universal Coefficient Theorem} (UCT) (\cite[Section 23]{black:kth}) if and only if it belongs to the localizing triangulated subcategory of the $\KKK$-category generated by the complex numbers (this category is denoted as $\langle \ast \rangle$ in \cite{nestmeyer:loc}). As in \cite{DEKM:spectrum}, let us denote this subcategory by $\mathcal{T}$. It is known that within this subcategory, an object is dualizable if and only if it is compact:

\begin{proposition}(\cite[Proposition 4.1]{DEKM:spectrum})\label{prop:compdual}
In the subcategory $\mathcal{T} \subseteq \KKK$, the full triangulated subcategory $\mathcal{T}_c$ of compact objects coincides with the (closed) symmetric monoidal category $\mathcal{T}_d$ of dualizable objects. Furthermore, both these two subcategories are equal to the thick triangulated subcategory generated by the complex numbers. 
\end{proposition}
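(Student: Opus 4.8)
The plan is to prove the two asserted coincidences simultaneously by establishing a cycle of inclusions
\[
\langle\C\rangle\ \subseteq\ \mathcal{T}_d\ \subseteq\ \mathcal{T}_c\ \subseteq\ \langle\C\rangle,
\]
where $\langle\C\rangle$ denotes the thick triangulated subcategory of $\mathcal{T}$ generated by the unit $\C$; once these are in place, all three subcategories agree. The key structural fact to record at the outset is that $\mathcal{T}$, being by construction the localizing subcategory of $\KKK$ generated by $\C$, is a tensor triangulated category compactly generated by the single object $\C$: the algebra $\C$ is the tensor unit (hence dualizable, with $\C^\vee=\C$), and it is compact because $\KKK(\C,-)=K_\ast(-)$ commutes with countable direct sums.

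First I would treat $\langle\C\rangle\subseteq\mathcal{T}_d$. Here the point is the standard fact that the strongly dualizable objects of a symmetric monoidal triangulated category form a thick subcategory containing the tensor unit: the dual of a suspension is the desuspension of the dual, a retract of a dualizable object is dualizable, and the cone of a morphism between dualizable objects is again dualizable (the two-out-of-three property for duals). Since this thick subcategory contains $\C$, it contains the thick closure $\langle\C\rangle$. This step simultaneously exhibits the closed symmetric monoidal structure on $\mathcal{T}_d$, the internal hom being $X^\vee\otimes(-)$.

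Next I would verify $\mathcal{T}_d\subseteq\mathcal{T}_c$, which is the purely formal direction. For $X$ dualizable with dual $X^\vee$, the triangle identities yield a natural isomorphism $\KKK(X,Z)\cong\KKK(\C,X^\vee\otimes Z)$ for every $Z$; combining this with the facts that $-\otimes-$ commutes with direct sums and that $\C$ is compact gives
\[
\KKK\Bigl(X,\textstyle\bigoplus_i D_i\Bigr)\cong\bigoplus_i\KKK(\C,X^\vee\otimes D_i)\cong\bigoplus_i\KKK(X,D_i),
\]
so that $X$ is compact. This is a short, routine calculation requiring no structural input beyond compactness of the unit.

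The remaining inclusion $\mathcal{T}_c\subseteq\langle\C\rangle$ is the one I expect to carry the real content, and I would invoke it rather than reprove it: the Neeman--Ravenel--Thomason theorem on compact objects in a compactly generated triangulated category \cite{nee:tri} states that the compact objects coincide with the thick subcategory generated by any compact set of generators. Applied to the single compact generator $\C$ of $\mathcal{T}$, this gives $\mathcal{T}_c=\langle\C\rangle$, and in particular $\mathcal{T}_c\subseteq\langle\C\rangle$. Chaining the three inclusions then yields $\langle\C\rangle=\mathcal{T}_d=\mathcal{T}_c$, which is exactly the proposition. The main obstacle is thus entirely concentrated in this last, citable, ingredient, while the tensor-triangular identifications of the first two steps are formal; the only genuine points of care are to confirm that $\C$ really is a compact generator of $\mathcal{T}$ and that the relevant coproducts (countable direct sums, adequate in the separable setting) are the ones with respect to which compactness and the generation theorem are being applied.
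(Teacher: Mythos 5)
First, a point of comparison that matters here: the paper does not prove this proposition at all --- it is imported verbatim from \cite[Proposition 4.1]{DEKM:spectrum} --- so there is no internal argument to measure yours against, and your proposal must stand on its own. Its architecture, the cycle $\langle\C\rangle \subseteq \mathcal{T}_d \subseteq \mathcal{T}_c \subseteq \langle\C\rangle$, is the standard way such statements are proved, and two of the inclusions are as routine as you say: $\mathcal{T}_d \subseteq \mathcal{T}_c$ is exactly your displayed computation (using that the spatial tensor product commutes with $c_0$-direct sums and that $K$-theory is continuous), and $\langle\C\rangle \subseteq \mathcal{T}_d$ follows once one knows that dualizable objects form a thick subcategory. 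For that last fact, note that the ``two-out-of-three'' property for duals is \emph{not} a formal consequence of $\otimes$ being triangulated in each variable: the usual proof (Hovey--Palmieri--Strickland) runs a five-lemma argument against the natural transformation $X^\vee \otimes Y \to F(X,Y)$, and so needs an internal hom $F$ (or May-type compatibility axioms between $\otimes$ and the triangulation). Both are available for $\mathcal{T}$ --- Brown representability in the countably compactly generated category $\mathcal{T}$ produces internal homs --- but this is a hypothesis you are silently using and it deserves a sentence.

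The one real soft spot is the step you chose to cite rather than prove. The Neeman--Ravenel--Thomason theorem as stated in \cite{nee:tri} is for triangulated categories with \emph{all} small coproducts, whereas $\KKK$ and $\mathcal{T}$ admit only countable ones; so the citation does not apply verbatim, and you flag this without resolving it. There are two ways to close the gap. (a) Rerun the cell-tower argument in the countable setting: with the single compact generator $\C$ this does go through, because $K$-groups of separable $C^*$-algebras are countable, so every object of $\mathcal{T}$ is a homotopy colimit of a tower whose stages attach countable sums of shifts of $\C$, and compactness with respect to countable sums suffices both for factoring $\mathrm{id}_X$ through a finite stage and for Thomason's shrinking-to-finite-subcoproducts induction. (b) Use the route typical of the $C^*$-literature (and special to the bootstrap category): a compact object of $\mathcal{T}$ has finitely generated $K$-theory --- a filtered-colimit argument of exactly the kind the paper itself runs in Lemma \ref{lem:cptuctfin} --- and a UCT object with finitely generated $K$-theory is $\KKK$-equivalent to a finite cell algebra (a finite iterated cone on sums of shifts of $\C$), hence lies in $\langle\C\rangle$ by \cite[Theorem 23.10.1]{black:kth}. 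Route (b) sidesteps the cardinality issue entirely and makes visible where the restriction to $\mathcal{T}$ (i.e., the UCT) is genuinely used; with either repair your proof is correct.
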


\begin{corollary}\label{cor_SWUCT} If $G$ has Spanier--Whitehead duality then $C^\ast_r(G)$ satisfies the UCT.
\end{corollary}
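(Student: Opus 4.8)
The plan is to combine Proposition \ref{prop:compdual} with the fact, recorded just before it, that a $C^*$-algebra admitting a Spanier--Whitehead $K$-dual is $\KKK$-compact. The single nonformal input I would establish first is that the \emph{dual} algebra $B:=C_0(\E)\rtimes G$ already satisfies the UCT, i.e.\ lies in $\mathcal{T}$. This rests on the proper, $G$-compact nature of the action: one may take $\E$ to be a finite-dimensional cocompact $G$-CW complex, so that $B$ is filtered by the crossed products of its skeleta, with successive subquotients induced from cells carrying finite stabilizers. By Green's imprimitivity theorem each such subquotient is Morita equivalent to an algebra of the form $C_0(D^n)\rtimes F$ with $F$ finite, which is type I and hence in the bootstrap class; since $\mathcal{T}$ is closed under Morita equivalence and under the extensions arising from the skeletal filtration, one concludes $B\in\mathcal{T}$ (cf.\ \cite{nestmeyer:loc} for the analogous statements about proper algebras).

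With this in hand I would argue categorically. Since $G$ has Spanier--Whitehead duality, $B$ is a Spanier--Whitehead $K$-dual of $A:=C^*_r(G)$, and by the symmetry of Definition \ref{def:swd} (cf.\ Remark \ref{rem:dualizable}) this exhibits $B$ as a dualizable object of $(\KKK,\otimes)$ with dual $A$. As noted before Proposition \ref{prop:compdual}, a dualizable object is $\KKK$-compact, because $\KK[*]{}{B}{-}\cong\KK[*]{}{\C}{A\otimes-}=K_*(A\otimes-)$ commutes with filtered colimits; being additive, this corepresentable functor then commutes with the (countable) direct sums available in the localizing subcategory $\mathcal{T}$, so $B$ is a compact object of $\mathcal{T}$.

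Now Proposition \ref{prop:compdual} applies directly: the compact object $B\in\mathcal{T}_c=\mathcal{T}_d$ is dualizable \emph{within} the symmetric monoidal category $\mathcal{T}$, and $\mathcal{T}_d$ is closed under passing to duals, so the dual of $B$ computed in $\mathcal{T}$ again lies in $\mathcal{T}$. To transfer this back, I would invoke uniqueness of duals: the inclusion $\mathcal{T}\hookrightarrow\KKK$ is a fully faithful symmetric monoidal functor, so a dual of $B$ in $\mathcal{T}$ is in particular a dual of $B$ in $\KKK$, and is therefore $\KKK$-equivalent to $A$. Since $\mathcal{T}$ is closed under $\KKK$-equivalence, $A=C^*_r(G)\in\mathcal{T}$, i.e.\ $C^*_r(G)$ satisfies the UCT.

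I expect the main obstacle to be the first step, establishing $C_0(\E)\rtimes G\in\mathcal{T}$; once membership of one of the two mutually dual algebras in the bootstrap class is secured, the remainder is formal via Proposition \ref{prop:compdual} and uniqueness of duals. (When a $\gamma$-element is present one could instead bypass this step by recalling that the strong Baum--Connes condition forces $C^*_r(G)\sim_{\KKK}P_\C\rtimes G$, which satisfies the UCT by \cite[Proposition 9.5]{nestmeyer:loc}; but the argument above does not presuppose the existence of a $\gamma$-element.)
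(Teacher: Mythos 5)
Your proposal is correct and follows essentially the same route as the paper's proof: establish that $C_0(\E)\rtimes G$ satisfies the UCT, observe that it is $\KKK$-compact because it admits a Spanier--Whitehead $K$-dual, apply Proposition \ref{prop:compdual} to produce a dual inside $\mathcal{T}$ (hence satisfying the UCT), and conclude by uniqueness of duals that $C^\ast_r(G)$ is $\KKK$-equivalent to that dual. The only difference is cosmetic: where you sketch a skeletal-filtration/imprimitivity argument for $C_0(\E)\rtimes G\in\mathcal{T}$, the paper simply cites \cite[Proposition 9.5]{nestmeyer:loc}.
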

\begin{proof} We know that $C_0(\E)\rtimes G$ satisfies the UCT \cite[Proposition 9.5]{nestmeyer:loc}. By assumption, $C_0(\E)\rtimes G$ has a Spanier--Whitehead $K$-dual $C^\ast_r(G)$. Thus, $C_0(\E)\rtimes G$ is $\KKK$-compact. By Proposition \ref{prop:compdual}, it is dualizable in $\mathcal{T}$. Namely, it has a Spanier--Whitehead $K$-dual, say $A$, which satisfies the UCT. On the other hand, it is fairly easy to see that a dual object is unique up to equivalence. Hence, $C^\ast_r(G)$ is $\KKK$-equivalent to $A$. The claim follows from this.
\end{proof}

The strong Baum--Connes conjecture was introduced in \cite{nestmeyer:loc} as the assertion that the canonical Dirac morphism $\alpha$ in $\KKK^G(P_\C, \C)$ induces a $\KKK$-equivalence $\jmath^G_r(\alpha)$ from $P_\C\rtimes G$ to $C^*_r(G)$. In the presence of the gamma element $\gamma$ for $G$, this is equivalent to the assertion that $\jmath^G_r(\gamma)=1_{C^*_r(G)}$.

\begin{theorem}\label{thm:equivswdbc}
If $G$ has Spanier--Whitehead duality then it satisfies the strong Baum--Connes conjecture. Moreover, if the $\gamma$-element exists and $G$ satisfies the strong Baum--Connes conjecture, than $G$ has Spanier--Whitehead duality.
\end{theorem}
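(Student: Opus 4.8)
The plan is to prove the two implications separately. The second (backward) implication is immediate from results already in hand: if the $\gamma$-element exists, then---as recalled immediately before the statement---the strong Baum--Connes conjecture is precisely the identity $\jmath^G_r(\gamma)=1_{C^*_r(G)}$, which is exactly the hypothesis of Theorem \ref{thm:swdm}. That theorem then delivers Spanier--Whitehead duality for $G$, so no further argument is needed in this direction.

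For the forward implication, suppose $G$ has Spanier--Whitehead duality, with the canonical unit $\delta_G$ as always. First I would note that full duality entails weak duality, so the induced map $\delta_*\colon K^*(C_0(\E)\rtimes G)\to K_*(C^*_r(G))$ is an isomorphism. Through the Green--Julg identification appearing in the commutative square relating $\delta_G$ to the assembly map, this isomorphism is exactly the assertion that $\mu^G_\C$ is an isomorphism, i.e.\ the ordinary Baum--Connes conjecture with trivial coefficients. By the Meyer--Nest description of the assembly map \cite{nestmeyer:loc}, under the natural identification $K_*(P_\C\rtimes G)\cong \KK[*]{G}{C_0(\E)}{\C}$ the map $\mu^G_\C$ coincides with the map induced on $K$-theory by the descended Dirac morphism $\jmath^G_r(\alpha)\colon P_\C\rtimes G\to C^*_r(G)$. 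Hence $\jmath^G_r(\alpha)$ is an isomorphism on $K$-theory.

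The crux is to promote this $K$-theory isomorphism to a $\KKK$-equivalence, which is what the strong Baum--Connes conjecture requires in its general form. For this I would invoke the Universal Coefficient Theorem. The algebra $P_\C\rtimes G$ always satisfies the UCT by \cite[Proposition 9.5]{nestmeyer:loc}, while $C^*_r(G)$ satisfies the UCT by Corollary \ref{cor_SWUCT}, using once more the standing hypothesis that $G$ has Spanier--Whitehead duality. Since both source and target of $\jmath^G_r(\alpha)$ lie in the bootstrap class and the morphism induces an isomorphism on $K_*$, the UCT forces $\jmath^G_r(\alpha)$ to be invertible in $\KKK$. By the definition of the strong Baum--Connes conjecture recalled above, this completes the forward implication.

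I expect the main obstacle to lie in the forward direction, specifically in the bookkeeping that identifies $\delta_*$ first with $\mu^G_\C$ and then with $\jmath^G_r(\alpha)_*$; it is here that one must use that the duality is taken with respect to the canonical unit $\delta_G$, which is precisely what ties the duality datum to the assembly map. Once this identification is secure, the final passage from a $K$-theoretic isomorphism to a genuine $\KKK$-equivalence is the standard UCT argument and poses no real difficulty.
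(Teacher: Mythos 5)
Your proposal is correct and follows essentially the same route as the paper: the backward direction is the same application of Theorem \ref{thm:swdm}, and the forward direction uses the identical argument that Spanier--Whitehead duality (with the canonical unit) gives the Baum--Connes isomorphism, hence $\jmath^G_r(\alpha)$ is a $K_*$-isomorphism, which is promoted to a $\KKK$-equivalence via the UCT for $P_\C\rtimes G$ (\cite[Proposition 9.5]{nestmeyer:loc}) and for $C^*_r(G)$ (Corollary \ref{cor_SWUCT}). The only difference is cosmetic: you spell out the bookkeeping identifying $\delta_*$ with $\mu^G_\C$ and then with $\jmath^G_r(\alpha)_*$, which the paper compresses into the sentence ``we know that the Baum--Connes conjecture holds for $G$.''
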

\begin{proof} 
Suppose $G$ has Spanier--Whitehead duality. Then, we know that the Baum--Connes conjecture holds for $G$, and so the Dirac morphism $\alpha$ induces an isomorphism $\jmath^G_r(\alpha)_*$ on $K$-groups from $P_\C\rtimes G$ to $C^\ast_r(G)$. Furthermore, both $P_\C\rtimes G$ and $C^\ast_r(G)$ satisfy the UCT by \cite[Proposition 9.5]{nestmeyer:loc} and by Corollary \ref{cor_SWUCT} respectively. It follows that $\jmath^G_r(\alpha)$ is a $\KKK$-equivalence \cite[Theorem 23.10.1]{black:kth}. Conversely, if the strong Baum--Connes conjecture holds, we have $\jmath^G_r(\gamma)=1_{C^*_r(G)}$. Hence, $G$ has Spanier--Whitehead duality by Theorem \ref{thm:swdm}.
\end{proof}

%However one soon notices that, while the right-hand side of \eqref{eq:assmapcoeffintro} admits a straightforward generalization by inserting a $C^*$-algebra in the left entry, the definition of the topological $K$-theory group does not have an equally obvious extension. This leads to considering two \emph{a priori} inequivalent variants \cite{otgon:bckk}. 
%The corollary above says that when the choice $A=C^*_r(G)$ is made, assuming duality, the subtleties vanish and ``one'' generalized definition of the conjecture is possible.

%In particular, thanks to Proposition \ref{thm:compdual} we see that $C^*$-algebras with a Spanier--Whitehead $K$-dual satisfy the UCT (localizing subcategories are automatically thick, see \cite{nee:tri}).

As in \cite[Theorem 23.10.5]{black:kth}, a $C^*$-algebra $A$ satisfies the UCT if and only if it is $\KKK$-equivalent to a commutative $C^*$-algebra $C_0(X)$. Furthermore, this $X$ can be taken to be a 3-dimensional cell complex (see \cite[Corollary 23.10.3]{black:kth}, \cite[Proposition 7.4]{rosscho:kunneth}). This is because the range of $K$-theory on such spaces exhausts all countable $\Z/(2)$-graded abelian groups. If $K_*(A)$ is finitely generated, then $X$ can be chosen finite, and a Spanier-Whitehead $K$-dual exists for such spaces \cite[Proposition 5.9]{meyereme:dualities}. %The following proof is well-known and it is included for completeness.

\begin{lemma}\label{lem:cptuctfin}
Suppose $A$ has a Spanier--Whitehead $K$-dual and satisfies the UCT. Then it has finitely generated $K$-theory groups.
\end{lemma}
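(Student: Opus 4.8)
The plan is to combine Proposition \ref{prop:compdual} with the elementary fact that the objects of $\KKK$ with finitely generated $K$-theory form a thick triangulated subcategory. The argument splits into two halves: first locate $A$ inside the thick subcategory generated by $\C$, then observe that every object of that subcategory has finitely generated $K$-groups.

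For the first half I would reason exactly as in the proof of Corollary \ref{cor_SWUCT}. Since $A$ satisfies the UCT, it lies in $\mathcal{T}$. Let $B$ be a Spanier--Whitehead $K$-dual of $A$. Then the functor $\KK[*]{}{A}{D}$ is naturally isomorphic to $\KK[*]{}{\C}{D\otimes B}$, and since $K$-theory is continuous this functor commutes with filtered colimits; that is, $A$ is $\KKK$-compact. Hence $A\in\mathcal{T}_c$, and Proposition \ref{prop:compdual} identifies $\mathcal{T}_c$ with the thick triangulated subcategory of $\KKK$ generated by the complex numbers.

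For the second half I would introduce the full subcategory $\mathcal{F}\subseteq\KKK$ of objects whose $K$-theory groups are finitely generated, and verify that it is a thick triangulated subcategory containing $\C$. It contains $\C$ because $K_*(\C)=\Z$ is finitely generated. It is closed under suspension since $K_*(\Sigma D)\cong K_{*-1}(D)$, and under retracts since the $K$-theory of a direct summand is a direct summand of a finitely generated abelian group, hence finitely generated. The essential point is completion of triangles: given an exact triangle $D'\to D\to D''\to\Sigma D'$ with $D',D\in\mathcal{F}$, the six-term exact sequence in $K$-theory presents each $K_j(D'')$ as an extension of a subgroup of $K_{j-1}(D')$ by a quotient of $K_j(D)$. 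Since subgroups, quotients, and extensions of finitely generated abelian groups are again finitely generated, one gets $D''\in\mathcal{F}$. Thus $\mathcal{F}$ is thick.

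Combining the two halves, $A$ lies in the thick subcategory generated by $\C$, which is contained in $\mathcal{F}$, so $K_*(A)$ is finitely generated. I expect the only mildly delicate step to be the verification of closure under triangle completion via the six-term sequence; but this is purely formal once one records the good closure properties of the category of finitely generated abelian groups, so no real obstacle arises.
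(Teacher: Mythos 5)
Your proof is correct, but it takes a genuinely different route from the paper's. The paper argues concretely: since $A$ satisfies the UCT, the Rosenberg--Schochet structure theory gives a commutative $C^*$-algebra $C=C^{0}\oplus C^{1}$ that is $\KKK$-equivalent to $A$ (built from mapping cones of maps between direct sums of $C_0(\R)$), and $C$ is an inductive limit of subalgebras $C_n$ with finitely generated $K$-theory; because $A$ is $\KKK$-compact (having a dual), the functor $\KK[*]{}{A}{-}$ is continuous, so the equivalence $A\to C$ factors through some $C_n$, whence $K_*(A)$ is a quotient (indeed a direct summand) of the finitely generated group $K_*(C_n)$. You instead stay entirely at the formal level: Proposition \ref{prop:compdual} places $A$ in the thick triangulated subcategory generated by $\C$, and you check that the objects with finitely generated $K$-theory form a thick triangulated subcategory containing $\C$, with closure under cones coming from the long exact sequence in $K$-theory together with the closure of finitely generated abelian groups under subgroups (Noetherianity of $\Z$), quotients, and extensions. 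Both arguments consume the hypotheses identically (UCT to enter $\mathcal{T}$, the dual to get compactness/dualizability); they diverge in the second step. Your version is cleaner, reuses Proposition \ref{prop:compdual} exactly as the paper already does for Corollary \ref{cor_SWUCT}, and works verbatim in any compactly generated tensor triangulated category whose unit has finitely generated graded endomorphism ring. The paper's version is lighter in categorical input---it needs only continuity of $\KK[*]{}{A}{-}$, not the identification of $\mathcal{T}_c$ with the thick subcategory generated by $\C$ from \cite{DEKM:spectrum}---and it yields slightly more explicit information, exhibiting $K_*(A)$ as a summand of the $K$-theory of a concrete finite-stage algebra $C_n$.
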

\begin{proof}
As in the proof of \cite[Proposition 7.4]{rosscho:kunneth}, \cite[Corollary 23.10.3]{black:kth}, let $C=C^{0}\oplus C^{1}$ be a commutative $C^*$-algebra $\KKK$-equivalent to $A$, where $C^{0}$ is the mapping cone of a $\ast$-homomorphism on direct sums of $C_0(\R)$, and $C^{1}$ is the suspension of such a mapping cone. It is easy to see that $C$ is the inductive limit of subalgebras $C_n$ where $C_n$ has finitely generated $K$-theory. Since $\KK[*]{}{A}{-}$ is continuous (since $A$ is $\KKK$-compact), the equivalence $A\to C$ factors through $C_n$ for some $n\in \N$. Then $K_*(A)$ is finitely generated because it is a quotient of $K_*(C_n)$, which enjoys this property.
\end{proof}

%A comment on the previous corollary when $A=C^*_r(G)$: when $\gamma$ equals $1_{C^*_r(G)}$ after descent, then $C^*_r(G)$ automatically satisfies the UCT. This is because $P_\C\rtimes G$ satisfies the UCT \cite[Proposition 9.5]{nestmeyer:loc}. With this observation, the assumption is only relevant when the duality is obtained through the $(\gamma)$-element.

%Skandalis \cite{skand:knuc} has shown the existence of discrete groups for which we have $\jmath^G_r(\gamma)\neq 1$, even though the Baum--Connes conjecture holds. Our approach for these groups does not allow deducing (strong) Spanier--Whitehead duality, wherefore the question of finite generation of $K$-groups for such examples. It turns out we can answer this positively, which is a slight improvement on the previous result.

\begin{proposition}
Suppose $G$ satisfies the Baum--Connes conjecture and the $\gamma$-element exists. Then $C^*_r(G)$ has finitely generated $K$-theory groups.
\end{proposition}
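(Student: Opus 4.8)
The plan is to transfer finite generation from the ``simplicial approximation'' $P_\C\rtimes G$, which carries a genuine duality, to $C^*_r(G)$ along the $K$-theory isomorphism furnished by the Baum--Connes conjecture. The point is that the ordinary conjecture does not by itself provide a (strong) Spanier--Whitehead dual for $C^*_r(G)$ (that would require the \emph{strong} conjecture, by Corollary \ref{cor:corB} and Theorem \ref{thm:equivswdbc}), so Lemma \ref{lem:cptuctfin} cannot be applied to $C^*_r(G)$ directly; instead it applies cleanly to $P_\C\rtimes G$.

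First, since the $\gamma$-element exists, Theorem \ref{thm:pdual} shows that $C_0(\E)\rtimes G$ is a Spanier--Whitehead $K$-dual of $P_\C\rtimes G$. In particular $P_\C\rtimes G$ admits a (strong) Spanier--Whitehead $K$-dual. Moreover $P_\C\rtimes G$ satisfies the UCT by \cite[Proposition 9.5]{nestmeyer:loc}. Lemma \ref{lem:cptuctfin}, applied to $A=P_\C\rtimes G$, then yields that the groups $K_*(P_\C\rtimes G)$ are finitely generated.

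Next I would invoke the Baum--Connes conjecture. The Dirac morphism $\alpha\in\KK{G}{P_\C}{\C}$ descends to a class $\jmath^G_r(\alpha)\in\KK{}{P_\C\rtimes G}{C^*_r(G)}$, and, as recalled in the proof of Theorem \ref{thm:equivswdbc}, the assumption that $\mu^G_\C$ is an isomorphism is equivalent to $\jmath^G_r(\alpha)$ inducing an isomorphism
\[
\jmath^G_r(\alpha)_*\colon K_*(P_\C\rtimes G)\xrightarrow{\ \cong\ }K_*(C^*_r(G)).
\]
Since finite generation is a property of abelian groups preserved under isomorphism, it follows that $K_*(C^*_r(G))$ is finitely generated, which is the assertion.

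The only real subtlety---and what I would flag as the crux---is the decision to work with $P_\C\rtimes G$ rather than with $C^*_r(G)$ itself. Without the strong form of the conjecture we have no reason to expect $C^*_r(G)$ to be $\KKK$-equivalent to a nuclear (hence dualizable, UCT) algebra; indeed for infinite property $(\mathrm{T})$ groups it typically is not, by Skandalis' non-$K$-nuclearity result. The gamma element nonetheless equips $P_\C\rtimes G$ with a strong dual regardless of the conjecture, while the ordinary conjecture supplies exactly the abelian-group isomorphism needed to push finite generation across; no compatibility of the duality data or of Kasparov products is required for this last step.
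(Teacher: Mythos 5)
Your proof is correct and follows exactly the same route as the paper: dualizability of $P_\C\rtimes G$ via Theorem \ref{thm:pdual}, the UCT for $P_\C\rtimes G$ from \cite[Proposition 9.5]{nestmeyer:loc}, Lemma \ref{lem:cptuctfin} to get finite generation of $K_*(P_\C\rtimes G)$, and then the identification of the assembly map with the map induced by $\jmath^G_r(\alpha)$ in the localization picture to transfer finite generation to $K_*(C^*_r(G))$. Your added remark about why one cannot apply Lemma \ref{lem:cptuctfin} directly to $C^*_r(G)$ (absent the strong conjecture) is a sound clarification of the same argument, not a different one.
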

\begin{proof}
{If $\gamma\in \KK{G}{\C}{\C}$ exists, then $P_\C\rtimes G$ is dualizable by Theorem \ref{thm:pdual}. It is known that $P_\C\rtimes G$ satisfies the UCT (see \cite[Proposition 9.5]{nestmeyer:loc}). Thus, $P_\C\rtimes G$ has finitely generated $K$-groups by Lemma \ref{lem:cptuctfin}.} Recall that in the localization picture the assembly map appears as
\begin{equation}\label{eq:assmaploc}
K_*(P_\C\rtimes G) \longrightarrow K_*(C^*_r(G)).
\end{equation}
Therefore if \eqref{eq:assmaploc} is an isomorphism the right-hand side is finitely generated.
\end{proof}
\begin{remark} {More generally, $C^*_r(G)$ has finitely generated $K$-theory groups if $G$ satisfies the Baum--Connes conjecture and the source $P_\C$ of the Dirac morphism is a (categorical) direct summand of a proper algebra. This is because by Remark \ref{rem_Kasparovdual} $P_\C\rtimes G$ has a Spanier--Whitehead $K$-dual.}
\end{remark}
\begin{remark}\label{rem:spectra}{
By the results in \cite{DEKM:spectrum}, there exists a functor $\mathbb{K}$ from the $\KKK$-category to the stable homotopy category, satisfying $\pi_n(\mathbb{K}(A))\cong K_n(A)$. This functor specializes to a full and faithful functor on the subcategory of dualizable objects satisfying the UCT, realizing $C^*$-algebras as perfect $\mathrm{KU}$-modules (in particular, finite spectra). Hence the previous results can be also obtained from the well-known fact that homotopy groups are finitely generated in this context.}
\end{remark}

Define the \emph{$n$-th dimension-drop algebra} as
\begin{equation*}
\I_n=\{ f\in C([0,1],M_n(\C)) \mid f(0)=0,f(1)\in \C 1_n\}.
\end{equation*}
We can use this to introduce the mod-$n$ $K$-theory groups as follows:
\begin{equation*}
K_*(B; \Z/(n))=\KK[*]{}{\I_n}{D}.
\end{equation*}

It is apparent from this definition that a Baum--Connes conjecture in mod-$n$ $K$-theory for $B$ would have to introduce coefficients on the left, and we can take this as motivation to find a satisfactory formulation for the full bivariant version of the Baum--Connes conjecture. The approach via localization immediately generalizes to this context, giving us a map
\begin{equation}\label{eq:bivassmap}
\KK[*]{}{A}{(P_{\C}\otimes B)\rtimes G}\longrightarrow \KK[*]{}{A}{B\rtimes_r G}
\end{equation}
defined as $y\mapsto y \otimes \jmath^G_r(1_B\hatotimes \alpha)$, where $\alpha\in\KK{}{P_{\C}}{\C}$ is the Dirac morphism, for any (separable) $C^*$-algebra $A$ and $G$-$C^*$-algebra $B$.

The original definition of the left-hand side (following \cite{bmp:lgbc} and \cite{otgon:bckk}), what is called the ``naive'' topological $K$-group in \cite{otgon:bckk}, is given as 
\begin{equation*}
\varinjlim_{Y\subseteq \E} \KK[*]{G}{C_0(Y,A)}{B},
\end{equation*}
where the limit ranges as usual over $G$-invariant $G$-compact subspaces of $\E$. Unlike the simpler case of the conjecture, the definition making use of the naive topological group is \emph{not} equivalent to the definition in \eqref{eq:bivassmap}. However \cite{otgon:bckk} shows that there are natural maps
\begin{equation}\label{eq:nucomp}
\nu_Y\colon \KK[*]{G}{C_0(Y,A)}{B}\longrightarrow \KK[*]{}{A}{(P_{\C}\otimes B)\rtimes G},
\end{equation}
which make the obvious diagram commute. In addition, if $A$ admits a Spanier--Whitehead $K$-dual, then \eqref{eq:nucomp} induces an isomorphism. 

\begin{theorem}[\cite{otgon:bckk}]
{Suppose $A$ has a Spanier--Whithead $K$-dual}. Then the comparison map induced by the $\nu_Y$'s is an isomorphism.
\end{theorem}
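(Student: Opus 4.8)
The plan is to eliminate the first variable $A$ by means of its Spanier--Whitehead $K$-dual, thereby reducing the bivariant comparison map to the ordinary (single-slot) one, whose bijectivity is already part of the localization picture. Fix a Spanier--Whitehead $K$-dual $A^\vee$ of $A$, with unit $\delta_A\in\KK[]{}{\C}{A\otimes A^\vee}$ and counit $d_A\in\KK[]{}{A\otimes A^\vee}{\C}$ (we suppress the degree shift $i$, which only reindexes the grading). Since $A$ and $A^\vee$ carry the trivial $G$-action, I would first transport this duality into the equivariant category along the inflation functor $\KKK\to\KKK^G$ that equips every object and cycle with the trivial action. This functor is symmetric monoidal and preserves Kasparov products, hence carries dual pairs to dual pairs; consequently $A$ is dualizable in $\KKK^G$ with dual $A^\vee$ and with unit and counit the inflations of $\delta_A$ and $d_A$. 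In particular, for every $G$-$C^*$-algebra $X$ there is a natural isomorphism
\[
\KK[*]{G}{C_0(X)\otimes A}{B}\;\cong\;\KK[*]{G}{C_0(X)}{A^\vee\otimes B},
\]
given by composing with the inflated unit and counit, exactly as in the non-equivariant identification $\KK[*]{}{A}{D}\cong\KK[*]{}{\C}{D\otimes A^\vee}$ recalled at the beginning of this section.

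Applying these isomorphisms to both ends of the comparison map, I would rewrite its source and target in terms of the single coefficient algebra $A^\vee\otimes B$. On the source side the naive topological group becomes
\[
\varinjlim_{Y\subseteq\E}\KK[*]{G}{C_0(Y)}{A^\vee\otimes B},
\]
the ordinary naive topological $K$-theory of $G$ with coefficients in $A^\vee\otimes B$. On the target side, duality turns $\KK[*]{}{A}{(P_\C\otimes B)\rtimes G}$ into $\KK[*]{}{\C}{(P_\C\otimes B)\rtimes G\otimes A^\vee}$; since $A^\vee$ carries the trivial action, the crossed product commutes with this external tensor factor, so the target is
\[
K_*\bigl((P_\C\otimes A^\vee\otimes B)\rtimes G\bigr),
\]
the localized (categorical) topological $K$-theory with the same coefficients. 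Thus the whole diagram defining the comparison map is identified, slot by slot, with the corresponding diagram for the single-slot conjecture with coefficients $A^\vee\otimes B$.

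It then remains to verify that $\nu_Y$ is intertwined with these duality isomorphisms, i.e. that under the identifications above it becomes the canonical comparison map from the naive to the localized topological $K$-theory. This is a naturality statement: the classes $\nu_Y$ are assembled from the structural maps of the homotopy colimit defining $P_\C$ (the simplicial approximation of \cite{nestmeyer:loc}), while tensoring by $A^\vee$ and composing with $\delta_A$ and $d_A$ are functorial in $Y$, so the two constructions commute. Once this compatibility is in place, the conclusion follows from the known fact that, with trivial first slot, the naive topological $K$-theory coincides with its localized counterpart $K_*((P_\C\otimes-)\rtimes G)$; this is precisely the characterization of $P_\C$ as a homotopy colimit over the $G$-compact subsets $Y\subseteq\E$, so the comparison map for coefficients $A^\vee\otimes B$ is an isomorphism, and hence so is the original one.

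The main obstacle is exactly this last naturality and compatibility check: one must ensure that the equivariant duality for $A$ (with trivial action) is uniform in $Y$ and commutes with the descent and crossed-product functors entering the definition of $\nu_Y$, so that passage to the colimit is legitimate. The existence of the duality isomorphisms is formal; the real content lies in confirming that they match $\nu_Y$ with the standard comparison map rather than with some twisted variant.
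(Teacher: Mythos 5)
First, a point of order: the paper contains no proof of this statement --- it is quoted from \cite{otgon:bckk} as an external result --- so there is no internal argument to compare yours against; I can only assess your proposal on its merits and against the way the dualizability hypothesis is evidently meant to be used.

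Your formal skeleton is correct, and it is surely the intended mechanism. The hypothesis is strong duality in the sense of Definition \ref{def:swd}, i.e.\ genuine dualizability with unit, counit and triangle identities (Remark \ref{rem:dualizable}); inflation along the trivial action is a strong symmetric monoidal functor $\KKK\to\KKK^G$, hence carries the dual pair $(A,A^\vee)$ to a dual pair, giving the adjunction
\[
\KK[*]{G}{D\otimes A}{B}\;\cong\;\KK[*]{G}{D}{A^\vee\otimes B}
\]
naturally in $D$ and $B$; and $((P_\C\otimes B)\rtimes G)\otimes A^\vee\cong(P_\C\otimes A^\vee\otimes B)\rtimes G$ because $A^\vee$ carries the trivial action. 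Note also that under the paper's standing assumption that $\E$ is $G$-compact, the directed system of subsets $Y$ has the terminal object $Y=\E$, so both colimits collapse and your colimit bookkeeping is unnecessary. Two caveats remain. Minor: $P_\C$ is \emph{not} the homotopy colimit of the $C_0(Y)$'s over $G$-compact $Y\subseteq\E$; it is a homotopy colimit built from compactly induced algebras (the simplicial approximation), and the fact you invoke --- that for trivial first slot the naive group agrees with $K_*((P_\C\otimes D)\rtimes G)$ via $\nu_Y^{\C,D}$ --- is a theorem of \cite{nestmeyer:loc} (the comparison of the localization assembly map with the classical Baum--Connes one), not a definition; it should be cited as such. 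Substantive, and you flag it yourself: the intertwining of $\nu_Y^{A,B}$ with $\nu_Y^{\C,A^\vee\otimes B}$ under the duality isomorphisms cannot be checked from anything available in this paper, since $\nu_Y$ is never constructed here --- only its existence and the commutativity of ``the obvious diagram'' are quoted. That check requires naturality of $\nu$ in $B$, contravariant naturality in $A$, and compatibility with exterior tensoring by trivial-action algebras; these are properties the construction in \cite{otgon:bckk} is designed to have, but as written, your proof defers its only nontrivial step to properties of a map you have not defined. So: right strategy and correct formal reduction, with the single contentful step left as an unverified (though very plausible) compatibility.
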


%\appendix
%\input{Sections/dolor}
% *****************************************************************
% Ending
%******************************************************************
%\input{BeginEnd/Bibliography}

%\nocite{*}
%\printbibliography

\bibliography{BibliographyBST}
\bibliographystyle{amsalpha-init}
\end{document}